\documentclass{amsart}

\usepackage{amssymb}
\usepackage{amsmath}
\usepackage{amsthm}
\usepackage{amsbsy}
\usepackage{bm}
\usepackage{hyperref}
\date{\today}
\usepackage{cite}
\usepackage{array}
\usepackage{color}

\def\and{\mbox{ and }}
\usepackage{tikz}
\usepackage{graphicx} 
\usepackage{cleveref}

\theoremstyle{theorem}
    \newtheorem{theorem}{Theorem}
    \newtheorem{lemma}{Lemma}
    \newtheorem{proposition}{Proposition}
    \newtheorem{corollary}{Corollary}

\theoremstyle{definition} % For roman text in the body
    \newtheorem{definition}{Definition}
    \newtheorem{fact}{Fact}
    \newtheorem{result}{Result}
    \newtheorem*{remark}{Remark}
    \newtheorem{example}[theorem]{Example}
    \newtheorem{exercise}[theorem]{Exercise}
    
    \newtheorem{assumption}{Assumption}

\def\suchthat{\; : \;}

\def\C{\mathbb{C}}

\def\N{\mathbb{N}}

\def\R{\mathbb{R}}

\def\l{\left}
\def\r{\right}
\def\<{\langle}
\def\>{\rangle}

\def\bar{\overline}
\def\P{{\bf P}}
\newcommand\mnote[1]{} %off
\newcommand\be{\begin{equation*}}

\newcommand\ee{\end{equation*}}

\newcommand\ben{\begin{equation}}
\newcommand\een{\end{equation}}
\newcommand\bes{\begin{eqnarray*}}
\newcommand\ees{\end{eqnarray*}}

\newcommand\bex{\begin{exercise}}
\newcommand\eex{\end{exercise}}
\newcommand\beg{\begin{example}}
\newcommand\eeg{\end{example}}
\newcommand\benu{\begin{enumerate}}
\newcommand\eenu{\end{enumerate}}
\newcommand\beit{\begin{itemize}}
\newcommand\eeit{\end{itemize}}
\newcommand\berk{\begin{remark}}
\newcommand\eerk{\end{remark}}
\newcommand\bdefn{\begin{defintion}}
\newcommand\edefn{\end{definition}}
\newcommand\bthm{\begin{theorem}}
\newcommand\ethm{\end{theorem}}
\newcommand\bprf{\begin{proof}}
\newcommand\eprf{\end{proof}}
\newcommand\blem{\begin{lemma}}
\newcommand\elem{\end{lemma}}

\newcommand{\sm}{{\raise0.3ex\hbox{$\scriptstyle \setminus$}}}

\def\l{\left}
\def\r{\right}

\def\CHI{\mathchoice%
{\raise2pt\hbox{$\chi$}}%
{\raise2pt\hbox{$\chi$}}%
{\raise1.3pt\hbox{$\scriptstyle\chi$}}%
{\raise0.8pt\hbox{$\scriptscriptstyle\chi$}}}
\def\smalloplus{\raise1pt\hbox{$\,\scriptstyle \oplus\;$}}

\theoremstyle{definition}

\title{Large deviations of crowding in finite $\beta$-ensembles }

\author{Kartick Adhikari}
\address{Department of Mathematics, Indian Institute of Science Education and Research, Bhopal 462066}

\email{kartickmath [at] iiserb.ac.in}

\author{Sitanath Majumder}
\address{Department of Mathematics, Indian Institute of Science Education and Research, Bhopal 462066}

\email{sitanath22 [at] iiserb.ac.in}

\date{\today}

\thanks{The research of KA was partially supported by the Inspire Faculty Fellowship: DST/InSPIRE/04/2020/000579.  KA would also like to acknowledge support from ICTP through the Associates Programme (2026-2032)}

\keywords{Random matrix, variance profile, limiting spectral distribution, covariance matrix.}

\begin{document}

\begin{abstract}
 We consider finite $\beta$-ensembles $\mathcal X_{n,\beta}^{\mathbb F}$ with $n$ points on $\mathbb F$,  where $\mathbb F$ denotes either the real line or the complex plane. Let $U$ be a bounded subset of $ \mathbb F$ such that $\partial U$ (the boundary of $U$) is polar for $\mathbb F=\R$ and $\partial U$
 is a closed $1$--rectifiable set with finite $1$-dimensional Hausdorff measure. Suppose $\mathcal X_{n,\beta}^{\mathbb F}(U)$ denotes the number of points in the region $U$. We show that the sequence of laws of $\{n^{-1}\mathcal X_{n,\beta}^{\mathbb F}(U); n\ge 1\}$ satisfies the large deviation type bound  with speed $n^2$ and with a good rate function. For $\mathbb{F} = \mathbb{R}$, this result can be derived using the contraction principle. However, when $\mathbb{F} = \mathbb{C}$, the contraction principle does not yield the desired outcome. Therefore, we adopt a direct approach to establish our results.
 \end{abstract}

\maketitle

\section{Introduction and main results}
 \subsection{The large deviation principle} The theory of {Large Deviation Principle} (LDP) is a fundamental area in probability theory that studies the asymptotic behavior of rare events, providing precise exponential estimates for their probabilities. The origins of large deviation theory lie in the work of Boltzmann on entropy in 1870 and \textit{Harald Cramér} (1938)\cite{cramer1994nouveau}, who analyzed the decay rates of tail probabilities for sums of independent random variables. Subsequent foundational contributions by \textit{Donsker and Varadhan} (1975) and \textit{Freidlin and Wentzell} (1984) led to a comprehensive framework for large deviation principles in various probabilistic settings. This indicates that a unifying mathematical formalism was only developed, starting with Varadhan's definition of a 'large deviation principle' in 1966\cite{varadhan1966asymptotic}. The most important basic results of the abstract
theory were proved more or less between 1966 and 1991, when O'Brien en
Verwaat \cite{o1991capacities} and Puhalskii \cite{pukhalskii1994theory} proved that exponential tightness implies
a subsequential LDP. The classical books on the topic are those of Deuschel and Stroock \cite{deuschel2001large},\cite{deuschel1989large}
and especially Dembo and Zeitouni \cite{dembo2009large}.

The \textit{large deviation principle} (LDP) provides a characterization of the limiting behavior of a sequence of probability measures $\{\mu_n\}$ defined on $(\mathcal{X}, \mathcal{B})$, where $\mathcal X$ is a metric space and $\mathcal B$ denotes the Borel $\sigma$-field on it. This principle is formulated using a \textit{rate function}, which gives exponential bounds for the measures assigned to specific subsets. A \textit{rate function} is defined as a lower semicontinuous function $I: \mathcal{X} \to [0, \infty]$ that satisfies the property that for any $\alpha \in [0, \infty)$, the level set $\Psi_I(\alpha) = \{ x \in \mathcal{X} \mid I(x) \leq \alpha \}$ is a closed subset of $\mathcal{X}$. A \textit{good} rate function is one where all level sets $\Psi_I(\alpha)$ form compact subsets of $\mathcal{X}$. 
The \textit{effective domain} of $I$, denoted as $D_I$, consists of all points in $\mathcal{X}$ where the rate function is finite, namely, 
\[
D_I = \{ x \in \mathcal{X} \mid I(x) < \infty \}.
\]
When context is clear, we simply refer to $D_I$ as the \textit{domain} of $I$.

%\noindent If $\mathcal{X}$ is equipped with a metric, then the lower semicontinuity condition can be examined through sequences. Specifically, $I$ is lower semicontinuous if and only if for every sequence $\{x_n\} \subset \mathcal{X}$ converging to $x$, the following holds:
%\[
%\liminf_{n \to \infty} I(x_n) \geq I(x).
%\]
%An immediate consequence of this definition is that the infimum of a rate function is always attained over closed sets.\\
\begin{definition}[The large deviation principle]
	Let $a_n$ be a monotonic increasing sequence with $\lim_{n\to \infty} a_n=\infty.$
A sequence of probability measures $\{\mu_n\}$ on $(\mathcal X,\mathcal B)$ is said to satisfy a \textit{large deviation principle} (LDP) with speed $a_n$ and  rate $I$ if the following hold
\begin{enumerate}
	\item for any open $\mathcal O\subseteq \mathcal X$,
	\[
	\liminf_{n \to \infty}a_n^{-1}\log \mu_n(\mathcal O)\ge - \inf_{x\in \mathcal O}I(x),
	\]
	
	\item for any closed $\mathcal F\subseteq \mathcal X$,
	\[
	\liminf_{n \to \infty}a_n^{-1}\log \mu_n(\mathcal F)\le - \inf_{x\in \mathcal F}I(x).
	\]
\end{enumerate}
\end{definition}

The LDP has been studied extensively in random matrix theory, in particular for various spectral statistics, including empirical spectral measures and extreme eigenvalues. In  \cite{arous1997large},  Arous and Guionnet established the large deviation for Wigner’s semicircle law. Arous and Zeitouni proved the large deviation for the Ginibre ensemble \cite{ben1998large}.
In the case of Gaussian ensembles, Coulomb gas methods have been successfully employed to derive precise rate functions for such deviations \cite{majumdar2009large} \cite{MR2926763}. For non-Gaussian Wigner matrices, recent breakthroughs have extended LDPs to matrices with sub-Gaussian and heavy-tailed entries, revealing the role of extreme matrix elements in deriving large deviations \cite{MR4235485} \cite{cook2023full} \cite{MR3265172}.

Many recent developments have been made in random matrix theory, which includes LDP for Wigner matrices without Gaussian tails by Bordenave and Caputo \cite{MR3265172},
the LDP for the largest eigenvalue of Wigner matrices without Gaussian tails \cite{MR3492936}, and the intermediate deviation regime for the full eigenvalue statistics in the complex Ginibre ensemble \cite{lacroix2019intermediate}. The LDP for the largest eigenvalue of matrices with variance profiles \cite{husson2022large}. In this article, we study the LDP type bound for the crowding of finite $\beta$-ensembles on the real line and the complex plane. 

\subsection{The finite $\beta$-ensembles}
Throughout, the set $\mathbb F$ represents either the set of real numbers or the set complex numbers. In other words  $\mathbb{F}=\mathbb{R}$ or $\mathbb{F}=\mathbb{C}$, where $\R$ and $\C$ denote the set of real and complex numbers respectively. Suppose $m$ denotes the Lebesgue measure on $\mathbb F$.
We consider the finite $\beta$-ensembles on $\mathbb F$, where $\beta>0$.   Let $\mathcal X_{n,\beta}^{\mathbb F}$ denote the finite $\beta$-ensemble with $n$ points on $\mathbb F$. The joint distribution of $n$ points $(\lambda_1,\ldots,\lambda_n)$  is defined by 
\begin{align*}
	\P^{\mathbb F}_{n,\beta}(d\lambda_1, \ldots, d\lambda_n) = \frac{1}{Z^{\mathbb F}_{n,\beta}} \prod_{1 \leq i < j \leq n} |\lambda_i - \lambda_j|^\beta e^{-n\sum_{i=1}^n V(\lambda_i)} \prod_{i=1}^n dm(\lambda_i),
\end{align*}
where $Z^{\mathbb F}_{n,\beta}$ denotes the normalizing constant, also known as {\it the partition function},
\begin{align}\label{eqn:Z}
Z^{\mathbb F}_{n,\beta}= \int_{\mathbb{F}} \cdots \int_{\mathbb{F}} |\Delta(\lambda)|^\beta e^{-n\sum_{i=1}^n V(\lambda_i)} \prod_{i=1}^n dm(\lambda_i),
\end{align}
and $V$ satisfies Assumption \ref{ass:V}. 
\begin{assumption}\label{ass:V} We assume that $V: \mathbb{F} \to \mathbb{R}_+$  is a continuous function which satisfies the following condition, for some $\beta' > 1$ satisfying $\beta' > \beta$,
	\[
	\liminf_{|x| \to \infty} \frac{V(x)}{\beta' \log |x|} > 1.
	\]
	In addition, for  $\mathbb{F}=\mathbb{C}$, we assume that $V\in C^{1,1}_{\mathrm{loc}}(\mathbb{C})$.
\end{assumption}

 In other words, the external potential has at least logarithmic growth at  infinity,  and it is continuously differentiable and its first derivative is Lipschitz continuous. Throughout, we assume $V$ satisfies Assumption \ref{ass:V}. Thus, for ease of writing, we suppress the dependence of $V$ in the notation.

This model has been studied in both statistical physics and random matrix theory extensively \cite{forrester2010log}\cite{MR3262506}. In statistical physics, when $\mathbb F=\mathbb R$, this model is used to explain the one-dimensional log-gas system \cite{dyson1962statistical},\cite{serfaty2024lectures}. In this case, the points represent the positions of the particles; the Vandermonde term captures the pairwise repulsion; $\beta$ represents the inverse temperature that controls the strength of the logarithmic repulsion; and $V$ corresponds to the external potential in the system. Whereas, when $\mathbb{F}=\mathbb C$ this model is used to explain the two-dimensional Coulomb gas system \cite{MR3353821}, \cite{hedenmalm2013coulomb},\cite{ben1998large}.

On the other hand, the $\beta$-ensembles can be seen as a generalisation of the eigenvalue distribution of random matrices. In particular, the ensembles  $\mathcal X_{n,\beta}^{\mathbb R}$ for $\beta=1,2,4$ with $V$ quadratic coincide with the joint distribution of the eigenvalues of the GOE (Gaussian orthogonal ensemble), GUE (Gaussian unitary ensemble), GSE (Gaussian simplectic ensemble) ensembles, respectively. The GOE, GUE, and GSE correspond to the $n$ eigenvalues of the real symmetric matrices, complex Hermitian matrices, and self-adjoint quaternion matrices, respectively. In the complex case, the ensemble $\mathcal X_{n,2}^{\mathbb C}$ with $V(x)=x^2/2$ coincides with the joint eigenvalues of an $n\times n$  random matrix with i.i.d. complex Gaussian entries with mean zero and variance $1/n$, known as the $n$-th Ginibre ensemble \cite{ginibre1965statistical}.

Large deviation for the $\beta$-ensemble was first introduced by Ben Arous and Alice Guionnet in their paper \cite{arous1997large} for Wigner matrices  and later extended to invariant ensembles with general potentials. A systematic treatment of the LDP for $\beta$-ensembles with general confining potentials in $\mathbb R$ can be found in \cite{anderson2010introduction}. The extension to the general values of $\beta>0$ was defined through the tridiagonal models introduced by Dumitriu and Edelman in \cite{MR1936554}. Further developments, including the weaker assumptions on the potential and connections with stochastic calculus, can be found in \cite{MR2095566}.
 In this article, we study the LDP of crowding of $\mathcal X_{n,\beta}^{\mathbb F}$ in a given domain. More precisely, suppose $U\subset \mathbb F$  and $\mathcal X_{n,\beta}^{\mathbb F}(U)$ denotes the number of points that lies in $U$. We study the LDP type bound of the laws of $n^{-1}\mathcal X_{n,\beta}^{\mathbb F}(U)$  for a class of $U$. Before stating the result, we introduce a few notations and definitions.  
 
  Let $(\lambda_1,\ldots, \lambda_n)$ be a collection of $\mathbb{F}$-valued random variables with their joint distribution $\P_{n,\beta}^{\mathbb F}$. Define  $X_k:=\delta_{\lambda_k}(U)$, for $k=1,\ldots, n$, and 
 \begin{align}\label{eqn:defncrowding}
\frac{1}{n}\mathcal X_{n,\beta}^{\mathbb F}(U)=\frac{1}{n}\sum_{k=1}^{n}X_k.
 \end{align}
 Observe that all the random variables $X_1,\ldots, X_n$ have the same distribution but not independent. Thus our problem is analogous to the problem of Cramer's theorem with dependent random variables.
 %%%%%%%%%%%%%%%%%%%%%%%%%%%%%%%%%%%%%%%%%%%%%%%%%%%%%%%%%%%%%%%%%%%%%%%%%%%%%% Add one paragraph about the Real ginibre and the complex ginibre ensemble. Also add one paragraph to state all the LDP reslts related that model.%%%%%%%%%%%%%%%%%%%%%%%%%%%%%%%%%%%%%%%%%%%%%%%%%%%%%%%%%%%%%%%%%%%%%%%%%%%%%%%%%%%%%%%%%%%%%%%%%%%%%%%%%%%%%%%%%%%%%%%%%%%%%%%%%%%%%%%%%%%%%%%%%%%%%%%%%%%%%%%%%%%%%%%%%%%%%%%%%%%%%%%%%%%%%%%%%%%%%%%%%%%%%%%%%%%%%%%%%%%%%%%%%
%\begin{definition}

Let $M_1(\mathbb F) $ denotes the set of all probability measures on $\mathbb F$.
%equipped with the usual weak topology on it.
Define the \textit{non-commutative entropy} $\Sigma : M_1(\mathbb{F}) \to [-\infty, \infty)$ as
\[
\Sigma(\mu): = 
\begin{cases}
\iint \log|x-y| d\mu(x) d\mu(y) & \text{if } \int \log(|x|+1) d\mu(x) < \infty, \\
-\infty & \text{otherwise}.
\end{cases}
\]
%\end{definition}
%\begin{definition}
Define the function $I_\beta : M_1(\mathbb{F}) \to [0,\infty]$ as
\begin{align}\label{eqn:defIbeta}
I_\beta(\mu) := 
\begin{cases}
\int V(x) d\mu(x) - \frac{\beta}{2} \Sigma(\mu) - c_\beta & \text{if } \int V(x) d\mu(x) < \infty, \\
\infty & \text{otherwise},
\end{cases} 
\end{align}
%\end{definition}
where
\(
c_\beta = \inf_{\mu \in M_1(\mathbb{F})} \left\{ \int V(x) d\mu(x) - \frac{\beta}{2} \Sigma(\mu) \right\}. 
\) 
Suppose $A\subset \mathbb F$, for $\epsilon>0$, define 
\[
A_\epsilon=\cup_{x\in A}B(x, \epsilon), \mbox{ where } B(x,\epsilon)=\{y\in \mathbb F\suchthat \|x-y\|<r\},
\]
 the $\epsilon$-neighbourhood of $A$, $\|\cdot\|$ denotes the usual norm on $\mathbb F$.  We define
 \begin{align}\label{eqn:defm1F}
 	M_1^*(\mathbb F)=\l\{ \begin{array}{ll}
 		\{\mu\in M_1(\mathbb R)\suchthat \mu(\partial U)=0 \} & \mbox{ if $\mathbb F=\R$},\\\\
 		\{\mu\in M_1(\mathbb C) \suchthat \mu((\partial U)_\epsilon)\le c\epsilon, \mbox{ for } 0<\epsilon\le \delta_0\} & \mbox{ if $\mathbb F=\C$},
 	\end{array}\r.
 \end{align}
% \begin{align*}
%  M_1^*(\mathbb C)=\{\mu\in M_1(\mathbb C) \suchthat \mu((\partial U)_\epsilon)\le c\epsilon, \mbox{ for } 0<\epsilon\le \delta_0\},
% \end{align*} 
 where $c$ and $\delta_0$ are two positive constants (here we take $c=2\mathcal H^1(\partial U)$, where $\mathcal H^1(\partial U)$ is 1-dimensional Hausdorff measure on $\partial{U}$, by assuming regularity condition on U, reason shown in \eqref{cor:tube_estimate_rectifiable}).
% For $\mathbb F=\mathbb R$ we define $M_1^*(\mathbb R)$ as 
% \begin{align*}
% 	M_1^*(\mathbb R)=\{\mu\in \mathbb R|\mu(\partial U)=0 \}.
% \end{align*}
 We define $\gamma:[0,1]\to [0,\infty]$ as   
\begin{align}\label{eqn:defgamma}
	\gamma(x):=\inf\{I_{\beta}(\mu) \suchthat \mu(U)=x, \mu\in M_1^*(\mathbb{F}) \}, \mbox{ for all $x\in [0,1]$}.
\end{align}
Throughout $\partial U$ denotes the boundary of $U$. The following regularity condition on $\partial U$ will be assumed in our result.
\begin{assumption}[Regularity condition on $U$]\label{ass:U} Let $U\subset \mathbb F$. We assume that 
 \begin{enumerate}
 	\item if $\mathbb{F}=\mathbb{R}$ then $\partial U$ is a polar set, and
 	\item if $\mathbb{F}=\mathbb{C}$ then $U\subset\mathbb{C}$ be a bounded domain whose boundary $\partial U$
 	is a closed $1$--rectifiable set and satisfies
 	\(
 	\mathcal H^1(\partial U)<\infty,
 	\)
 	where $\mathcal H^1(\cdot)$ denotes the $1$-dimensional Hausdorff measure.
 \end{enumerate}
\end{assumption}
\noindent We refer to Section \ref{sec:pre} for the definitions of a $1$-rectifiable set and the $1$-dimensional Hausdorff measure. Now we are ready to state our main result.
\begin{theorem}\label{theorem2}
Let $U\subset \mathbb F$ which satisfies Assumption \ref{ass:U}, and $n^{-1}\mathcal X_{n,\beta}^{\mathbb F}(U) $ be as defined in \eqref{eqn:defncrowding}.    Then the sequence of laws of $\{n^{-1}\mathcal X_{n,\beta}^{\mathbb F}(U) \}$ satisfies the following large deviation type bounds with speed $n^2$ and good rate  $\gamma$ as  defined in \eqref{eqn:defgamma}.
\begin{enumerate}
\item[(i)] For any open set $O \subseteq [0,1]$,
\[
\liminf_{n \to \infty} \frac{1}{n^2} \log \P^{\mathbb F}_{n,\beta} (n^{-1}\mathcal X_{n,\beta}^{\mathbb F}(U)  \in O) \geq -\inf_{x\in O} \gamma(x),
\]
\item[(ii)] For $\mathbb F= \mathbb{R}$ and any closed set $F \subseteq [0,1]$,
\[
\limsup_{n \to \infty} \frac{1}{n^2} \log \P^{\mathbb R}_{n,\beta}(n^{-1}\mathcal X_{n,\beta}^{\mathbb R}(U)  \in F) \leq -\inf_{x\in F} \gamma(x).
\]
However, for $\mathbb F=\mathbb C$, the last inequality holds if $F$ satisfies
\begin{align}\label{eqn:condiononF}
\P_{n,\beta}^{\mathbb{C}}(L_n\in \mathcal S_F\cap M_1^*(\mathbb C)^c)\le e^{n^\alpha}\P_{n,\beta}^{\mathbb{C}}(L_n\in \mathcal S_F\cap M_1^*(\mathbb C)),
\end{align}
for some $0\le \alpha<2$ and for large $n$, where $\mathcal S_F=\{\mu\in M_1(\C)\suchthat \mu(U)\in F\}$.
\end{enumerate}
\end{theorem}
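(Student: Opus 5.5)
The plan is to derive both bounds from the large deviation principle for the empirical measure $L_n=\frac1n\sum_{i=1}^n\delta_{\lambda_i}$ under $\P^{\mathbb F}_{n,\beta}$. By Ben Arous--Guionnet for $\mathbb F=\R$ and Ben Arous--Zeitouni / Hardy type results for $\mathbb F=\C$, $L_n$ satisfies an LDP with speed $n^2$ and good rate function $I_\beta$ in the weak topology. We have $n^{-1}\mathcal X_{n,\beta}^{\mathbb F}(U)=L_n(U)$, so
\[
\P(n^{-1}\mathcal X_{n,\beta}^{\mathbb F}(U)\in A)=\P(L_n\in\mathcal S_A), \qquad \mathcal S_A=\{\mu\suchthat \mu(U)\in A\}.
\]
The map $\mu\mapsto\mu(U)$ is not continuous. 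It is, however, continuous at every $\mu$ with $\mu(\partial U)=0$ (portmanteau), and this is why $M_1^*(\mathbb F)$ enters. A preliminary step shows $\gamma$ is a good rate function on $[0,1]$: lower semicontinuity and compactness of $\{\gamma\le a\}$ follow from goodness of $I_\beta$ and the fact that the restriction to $M_1^*$ is closed under weak limits of sequences with bounded $I_\beta$. In the real case, polarity of $\partial U$ forces $\mu(\partial U)=0$ whenever $I_\beta(\mu)<\infty$. In the complex case, the tube condition $\mu((\partial U)_\epsilon)\le c\epsilon$ is stable under weak limits, since $(\partial U)_\epsilon$ is open.

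\emph{Lower bound.} Fix $x\in O$ and $\mu\in M_1^*$ with $\mu(U)=x$ and $I_\beta(\mu)<\infty$. Because $\mu(\partial U)=0$ (automatic in both cases; for $\C$ let $\epsilon\to0$), continuity at $\mu$ gives a weak neighbourhood $G\ni\mu$ with $G\subset\mathcal S_O$. The LDP lower bound then yields $\liminf n^{-2}\log\P(L_n\in G)\ge -I_\beta(\mu)$. Optimizing over $\mu$ and $x$ gives (i).

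\emph{Upper bound, real case.} For closed $F$, take the weak closure $\overline{\mathcal S_F}$ and apply the LDP upper bound, which gives $-\inf_{\overline{\mathcal S_F}}I_\beta$. If $\nu\in\overline{\mathcal S_F}$ has $I_\beta(\nu)<\infty$, then $\nu(\partial U)=0$ by polarity, so $\nu(U)$ is a limit of values in $F$ and therefore lies in $F$. Hence $\inf_{\overline{\mathcal S_F}}I_\beta\ge\inf_F\gamma$. This is exactly a contraction argument restricted to the effective domain.

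\emph{Upper bound, complex case.} Finite-energy measures on $\C$ do not charge sets of zero capacity, but $\partial U$ has positive capacity, so $\nu(\partial U)$ can be positive. I would use hypothesis \eqref{eqn:condiononF} to bound
\[
\P(L_n\in\mathcal S_F)\le (1+e^{n^\alpha})\,\P(L_n\in\mathcal S_F\cap M_1^*),
\]
and since $\alpha<2$ the prefactor is negligible at speed $n^2$. On $M_1^*$, the constraint $\mu((\partial U)_\epsilon)\le c\epsilon$ with $c=2\mathcal H^1(\partial U)$, justified by the tube estimate for rectifiable sets, passes to weak limits and gives $\nu(\partial U)=0$. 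Thus the closure of $\mathcal S_F\cap M_1^*$ lies in $\mathcal S_F\cap M_1^*$, by continuity of $\mu\mapsto\mu(U)$ there and closedness of $F$. The LDP upper bound then gives $-\inf_F\gamma$.

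The main obstacle is this complex upper bound: making rigorous that $M_1^*(\C)$ is weakly closed and that the tube estimate holds uniformly in $\epsilon\le\delta_0$. Goodness of $\gamma$ in the complex case also depends on that closedness.
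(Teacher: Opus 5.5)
Your proof is correct, but it is organized differently from the paper's.

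\textbf{The paper's route.} The paper first proves Proposition~\ref{proposition}, an LDP for $L_n$ on the subspace $M_1^*(\mathbb F)$. For $\R$ it uses Result~\ref{res:dembo}, relying on $L_n\in M_1^*(\R)$ almost surely and $\mathcal D_{I_\beta}\subset M_1^*(\R)$. For $\C$ it uses the direct argument of Section~\ref{sec:proof}. It then reads the theorem off the splitting $\P(L_n\in\mathcal S_A)=\P(L_n\in\mathcal S_A^*)+\P(L_n\in\mathcal S_A\cap M_1^*(\mathbb F)^c)$, with $\mathcal S_O^*$ and $\mathcal S_F^*$ open and closed in $M_1^*(\mathbb F)$.

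\textbf{Your route.} You never leave $M_1(\mathbb F)$ and use only the full-space LDP.
\begin{itemize}
\item The lower bound comes from a neighbourhood of $\mu$ that is open in $M_1(\mathbb F)$ and contained in $\mathcal S_O$. This uses continuity of $\nu\mapsto\nu(U)$ at any $\mu$ with $\mu(\partial U)=0$.
\item The real upper bound comes from $\overline{\mathcal S_F}$ together with $\mathcal D_{I_\beta}\subset M_1^*(\R)$.
\item The complex upper bound comes from closedness of $\mathcal S_F\cap M_1^*(\C)$ in $M_1(\C)$. This is the same observation the paper makes at the end of its upper-bound argument.
\end{itemize}

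\textbf{What each route buys.} The main gain of your route is in the lower bound. Because $L_n\in M_1^*(\C)$ does not hold almost surely, a lower bound for sets open only relative to $M_1^*(\C)$ is not a consequence of the full-space LDP. One would have to exhibit near-optimal configurations whose empirical measures themselves satisfy the tube condition. That is what the direct argument of Section~\ref{sec:proof} is meant to supply. You notice that the event $\{L_n\in\mathcal S_O\}$ need not be intersected with $M_1^*$ at all, so that machinery becomes unnecessary for the theorem. In the real case you also dispense with the almost-sure statement and Result~\ref{res:dembo}. In exchange, the paper's route gives the LDP on $M_1^*(\mathbb F)$ as a statement in its own right, from which the theorem is a formal corollary.

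\textbf{Two small points.}
\begin{itemize}
\item For general $V$ as in Assumption~\ref{ass:V} and general $\beta>0$, the full-space LDP you need is Result~\ref{re:ldp}: Anderson--Guionnet--Zeitouni for $\R$ and Chafa\"i--Gozlan--Zitt for $\C$.
\item The uniform tube estimate you single out as the main obstacle is not needed for the bounds. Closedness of $M_1^*(\C)$ is the portmanteau argument you already gave. Also, $\gamma$ is defined relative to $M_1^*(\C)$ for whatever constants $c,\delta_0$ are fixed. The tube estimate only matters for knowing that $M_1^*(\C)$ is nontrivial, e.g.\ that it contains $\mu_V$.
\end{itemize}
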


%Note that if $\mu_V(U)\in F$, that is, $\mu_V\in \mathcal S_F$ then \eqref{eqn:condiononF} holds trivially. 

\begin{remark}  
	\begin{enumerate} 
		\item  \Cref{theorem2} states that the full LDP holds when $\mathbb F=\mathbb R$. However, the full LDP does not hold for $\mathbb F=\mathbb C$.
\item Suppose $\mu_V$ is the weighted equillibrium measure with externel potential $V/2$. It can be shown that $\mu_V\in M_1^*(\C)$. See \Cref{cor:equillibriummeasure}. It is not difficult to see that if $\mu_V(U)\in F$, that is, $\mu_V\in \mathcal S_F$ then \eqref{eqn:condiononF} holds.
		
		\item Instead of considering the law of $n^{-1}\mathcal X_{n,\beta}^{\mathbb F}(U)$, if we consider the family of laws say $P_{n,\beta}^{\mathbb{C}}(L_n\in M_1^*(C)\cap \mathcal S_F)$, the full large deviation bound will hold directly. See \Cref{re:contraction principle} for the contraction principle. In our case the full large deviation will hold for $\mathbb{F}=\mathbb{R}$ which can be derived from the following proposition and the contraction principle. However, for the complex case the LDP does follow from the contraction principle. In this case,  the lower bound follows easily. However, we do not have the large deviation type upper bound without the condition \eqref{eqn:condiononF} on the closed set $F$. 
	\end{enumerate}

\end{remark}

\begin{result}\label{re:contraction principle}\cite[Theorem 4.2.1]{dembo2009large}
	Let $\mathcal{X}$ and $\mathcal{Y}$ be Hausdorff topological spaces and
	$f:\mathcal{X}\to\mathcal{Y}$ a continuous function. Consider a good
	rate function $I:\mathcal{X}\to[0,\infty]$.
	
	\begin{enumerate}
		\item[(a)] For each $y\in\mathcal{Y}$, define
		\[
		I'(y)= \inf\{I(x):x\in\mathcal{X},\ y=f(x)\}.
		\]
		 Then $I'$ is a good rate function on $\mathcal{Y}$, where the
		infimum over the empty set is taken as $\infty$.
		
		\item[(b)] If $I$ controls the LDP associated with a family of probability
		measures $\{\mu_n\}$ on $\mathcal{X}$, then $I'$ controls the
		LDP associated with the family of probability measures
		$\{\mu_n\circ f^{-1}\}$ on $\mathcal{Y}$.
	\end{enumerate}
\end{result}

The proof of \Cref{theorem2} is given in \Cref{sec:proofofthm}.  The key result for the proof is the following proposition.   The  empirical measure of $n$ points $\lambda_1,\ldots, \lambda_n\in \mathbb F$ is denoted by $L_n$, and is defined as
\begin{align}\label{eqn:defLn}
L_n := \frac{1}{n} \sum_{k=1}^n \delta_{\lambda_k},
\end{align}
where $\delta_{\lambda}$ denotes the Dirac measure on $\mathbb F$. 
\begin{proposition}\label{proposition}
	Let $(\lambda_1,\ldots,\lambda_n)$  be distributed according to the law $\P_{n,\beta}^{\mathbb F}$, and $L_n$ be as defined in \eqref{eqn:defLn}.  Then the family of random measures $\{L_n\}$ satisfies  the large deviation principle in $M_1^*(\mathbb{F})$ equipped with the weak topology  with speed $n^2$ and good rate  $I_\beta$ as defined in \eqref{eqn:defIbeta}.
	In  other words,
	\begin{enumerate}
		\item for any open set $\mathcal O \subset M_1^*(\mathbb{F})$,
		\[
		\liminf_{n \to \infty} \frac{1}{n^2} \log \P^{\mathbb F}_{n,\beta} (L_n \in \mathcal O) \geq -\inf_{\mu\in \mathcal O} I_\beta(\mu),
		\]
		\item for any closed set $\mathcal F \subset M_1^*(\mathbb{F})$,
		\[
		\limsup_{n \to \infty} \frac{1}{n^2} \log \P^{\mathbb F}_{n,\beta}(L_n \in \mathcal F) \leq -\inf_{\mu\in \mathcal F} I_\beta(\mu).
		\]
	\end{enumerate}
	\end{proposition}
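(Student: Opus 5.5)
The plan is to deduce the proposition from the classical full LDP for $\{L_n\}$ on $M_1(\mathbb F)$ with speed $n^2$ and good rate $I_\beta$. For $\mathbb F=\R$ this is the Ben Arous--Guionnet theorem as presented in \cite{anderson2010introduction}. For $\mathbb F=\C$ it is the Ben Arous--Zeitouni / Hiai--Petz version \cite{ben1998large}. Assumption \ref{ass:V} is exactly what these proofs require: growth faster than $\beta'\log|x|$ gives exponential tightness and the convergence $n^{-2}\log Z^{\mathbb F}_{n,\beta}\to -\frac{\beta}{2}\cdot\frac{2}{\beta}c_\beta$, suitably normalized. I would briefly re-derive the two halves rather than only cite them, so that the restriction to $M_1^*(\mathbb F)$ can be tracked.

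\textbf{Step 1 (upper bound).} Let $\mathcal F$ be closed in $M_1^*(\mathbb F)$. Then $\mathcal F=\bar{\mathcal F}\cap M_1^*(\mathbb F)$, where $\bar{\mathcal F}$ is its weak closure in $M_1(\mathbb F)$. Since $\P(L_n\in\mathcal F)\le\P(L_n\in\bar{\mathcal F})$, the full LDP gives the bound $-\inf_{\bar{\mathcal F}}I_\beta$. I therefore need
\[
\inf_{\bar{\mathcal F}}I_\beta=\inf_{\mathcal F}I_\beta .
\]
For $\mathbb F=\C$ the set $M_1^*(\C)$ is weakly closed. Indeed, $(\partial U)_\epsilon$ is open, and by the Portmanteau theorem $\mu((\partial U)_\epsilon)\le\liminf\mu_k((\partial U)_\epsilon)\le c\epsilon$. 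So $\bar{\mathcal F}=\mathcal F$ and nothing more is needed. For $\mathbb F=\R$, any $\mu$ with $I_\beta(\mu)<\infty$ has finite logarithmic energy. Such a measure charges no polar set, so $\mu(\partial U)=0$ by Assumption \ref{ass:U}(1). Hence $\{I_\beta<\infty\}\subset M_1^*(\R)$, and $\inf_{\bar{\mathcal F}}I_\beta=\inf_{\bar{\mathcal F}\cap M_1^*}I_\beta=\inf_{\mathcal F}I_\beta$.

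\textbf{Step 2 (lower bound).} Let $\mathcal O$ be open in $M_1^*(\mathbb F)$ and fix $\mu\in\mathcal O$ with $I_\beta(\mu)<\infty$. Write $\mathcal O=G\cap M_1^*(\mathbb F)$ with $G$ weakly open. The full LDP only bounds $\P(L_n\in G)$ from below, while $L_n$ itself may lie outside $M_1^*$. For $\mathbb F=\R$ this causes no trouble: $L_n$ is a.s.\ in $M_1^*(\R)$ because $\partial U$ is polar, hence Lebesgue-null, and the law of $(\lambda_i)$ is absolutely continuous. So $\P(L_n\in\mathcal O)=\P(L_n\in G)$.

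For $\mathbb F=\C$ I would redo the standard local lower bound directly. The steps are:
\begin{enumerate}
\item Approximate $\mu$ in $I_\beta$ by a compactly supported measure with bounded density. This uses truncation and convolution; the $C^{1,1}_{\mathrm{loc}}$ regularity of $V$ and lower semicontinuity handle the limit.
\item Note that bounded density $\rho$ gives $\mu((\partial U)_\epsilon)\le\|\rho\|_\infty\,m((\partial U)_\epsilon)$.
\item Use the tube estimate \eqref{cor:tube_estimate_rectifiable}, which gives $m((\partial U)_\epsilon)\lesssim\mathcal H^1(\partial U)\epsilon$. Combined with the previous item, this keeps the approximant in $M_1^*(\C)$, possibly after mixing a small multiple of $\mu_V$ and rescaling.
\item Restrict the integral \eqref{eqn:Z} to configurations with $\lambda_i$ in small disjoint cells around quantiles of the approximant, chosen away from $\partial U$ up to $o(n)$ points. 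This keeps $L_n\in M_1^*(\C)\cap G$.
\item Estimate the Vandermonde from below as in \cite{ben1998large}, which gives $-I_\beta(\mu)-o(1)$ at speed $n^2$.
\end{enumerate}

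\textbf{Step 3 (goodness).} Goodness of $I_\beta$ restricted to $M_1^*$ is inherited. For $\C$, level sets are intersections of compact sets with the closed set $M_1^*(\C)$. For $\R$, level sets already lie inside $M_1^*$.

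The main obstacle I expect is the complex lower bound in Step 2. Empirical measures are atomic, so the constraint $L_n((\partial U)_\epsilon)\le c\epsilon$ cannot hold for $\epsilon$ below the interparticle scale $n^{-1/2}$ whenever a point sits near $\partial U$. The cell construction must therefore push all points at distance $\ge\delta_0$ from $\partial U$, or else $c$ must absorb a $1/n$ error. I would try the first option: place no points in $(\partial U)_{\delta_0}$ and check that the resulting energy loss can be made arbitrarily small. If that fails, the lower bound is only available for $\mathcal O$ whose intersection with the atomic measures is handled through the relaxed constraint.
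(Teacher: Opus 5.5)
Your treatment of $\mathbb F=\R$ is correct and is essentially the paper's argument: you use almost sure membership $L_n\in M_1^*(\R)$ together with $\{I_\beta<\infty\}\subset M_1^*(\R)$, which the paper packages through \cite[Lemma 4.1.5]{dembo2009large}. The same holds for the upper bound when $\mathbb F=\C$, via weak closedness of $M_1^*(\C)$, and for goodness. One citation point: for general $\beta$ on $\C$ the input LDP in $M_1(\C)$ is \cite{MR3262506}, not \cite{ben1998large}.

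\textbf{The gap is the lower bound for $\mathbb F=\C$.} Since $\mathcal O=G\cap M_1^*(\C)$, you must exhibit configurations of probability at least $e^{-n^2(I_\beta(\mu)+o(1))}$ for which $L_n$ itself lies in $M_1^*(\C)$. That means $\#\{k:\mathrm{dist}(\lambda_k,\partial U)<\epsilon\}\le cn\epsilon$ for every $\epsilon\in(0,\delta_0]$, with the \emph{fixed} constant $c$. In particular no point may lie within distance $1/(cn)$ of $\partial U$; this, not $n^{-1/2}$, is the binding scale. Your items (2)--(4) do not deliver this, for three reasons.
\begin{enumerate}
\item A bounded density gives $\mu((\partial U)_\epsilon)\le\|\rho\|_\infty C_1\epsilon$, which is at most $c\epsilon$ only if $\|\rho\|_\infty\le c/C_1$. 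Mixing in $t\mu_V$ lowers $\|\rho\|_\infty$ only by the factor $1-t$, and it creates no room below $c\epsilon$ because $\mu_V$ charges the tubes too. An approximant outside the closed set $M_1^*(\C)$ has a weak neighbourhood disjoint from $M_1^*(\C)$, so its local lower bound says nothing about $\P^{\C}_{n,\beta}(L_n\in\mathcal O)$.
\item Allowing $o(n)$ points near $\partial U$ is fatal, since a single point within $1/(cn)$ already violates the constraint.
\item Your fallback of emptying $(\partial U)_{\delta_0}$ fails for a reason unrelated to energy. Since $\delta_0$ is fixed, if $\mu((\partial U)_{\delta_0})>0$ the resulting $L_n$ stay a fixed weak distance from $\mu$ and leave $B(\mu,\delta)$ for small $\delta$.
\end{enumerate}

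\textbf{What is missing} is a way to create room at every scale before discretizing, with the emptied tube shrinking after $\delta$ is fixed. One possible route is as follows.
\begin{enumerate}
\item Take $\sigma$ of finite energy supported at distance $>\delta_0$ from $\partial U$ and set $\mu_t=(1-t)\mu+t\sigma$. Then $\mu_t((\partial U)_\epsilon)\le(1-t)c\epsilon$, $\mu_t\in\mathcal O$ for small $t$, and $I_\beta(\mu_t)\le(1-t)I_\beta(\mu)+tI_\beta(\sigma)$ by convexity.
\item Move the mass of $(\partial U)_\eta$ (at most $c\eta$) onto $\sigma$. This changes the energy by $o(1)$ as $\eta\to0$, because $\mu_t(\partial U)=0$.
\item Apply a regularization that moves mass by at most $\eta'\ll t\eta$, and never pulls mass lying at distance $\ge\delta_0$ into $(\partial U)_{\delta_0}$. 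Then place the points in cells of vanishing diameter, all at distance $\ge\eta/2$ from $\partial U$.
\end{enumerate}
With these choices, for $\epsilon\in[\eta/2,\delta_0]$ the number of points in $(\partial U)_\epsilon$ is at most $(1-t)cn(\epsilon+\eta'+o(1))\le cn\epsilon$ for large $n$. Lemma~\ref{lem:construction of points} would also have to be adapted, since the density is no longer bounded below on a square.

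Be aware that the paper's own route does not supply this step either. Lemma~\ref{lem:openset} bounds $\P^{\C}_{n,\beta}(L_n\in B(\nu,\delta))$ for a ball of $M_1(\C)$. Neither the configuration set $S$ there nor the approximations $\mu|_{D_n}/\mu(D_n)$ and $\mu*\gamma_\varepsilon$ are checked to lie in $M_1^*(\C)$. Your worry is therefore well founded and has to be resolved explicitly.
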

\begin{remark}
The reason that we choose to work with $M_1^*(\mathbb{C})$ for the complex case is that it is a closed subset of $M_1(\mathbb{C})$ and that helps us to show that $\gamma$ is a good rate function. We could have worked with $\{\mu\in M_1(\mathbb{C})|\mu(\partial{U})=0\}$ to directly use the contraction principle and to derive our result, but then proving that $\gamma$ a good rate function would be difficult as this space is not closed in $M_1(\mathbb{C})$. 
%In fact, it is not clear whether $I_\beta$ is good on $\{\mu\in M_1(\mathbb{C})|\mu(\partial{U})=0\}$.
\end{remark}
  The proofs of \Cref{proposition} for $\mathbb F=\R$ and $\mathbb F=\C$  are given in \Cref{sec:proofofprop} and \Cref{sec:proof} respectively. In \cite{arous1997large}, Arous and Guionnet showed that the laws of the random measures $\{L_n\}$ satisfy the LDP  when $\mathbb F=\mathbb R$ in $M_1(\R)$ equipped with the weak topology  with speed $n^2$ and good rate function $I_\beta$.  On the other hand, Arous and Zeitouni  \cite{ben1998large} proved that $\{L_n\}$ satisfies the LDP in $M_1^S(\mathbb C)$ (the space of all symmetric measures equipped with weak topology) with rate $n^2$ and with rate function $I_\beta$ when $\mathbb F=\mathbb C$ and $\beta=2$. In \cite{MR3262506}, Chafai, Gozlan and Zitt showed that the LDP of $\{L_n\}$ holds in $ M_1(\mathbb C)$ for general $\beta$. We derived \Cref{proposition}  for $\mathbb F=\mathbb R$ from the LDP result of $\{L_n\}$ in $M_1(\mathbb R)$.  See Section \ref{sec:proofofprop}. However, this method does not work in the case of $\mathbb F=\mathbb C$. Thus, in this case, we provide a direct proof of the proposition, inspired by the proofs in \cite{ben1998large} and \cite{MR3262506}. See Section \ref{sec:proof}.

\subsection{Related work and motivation} 
The LDP for outlying coordinates in $\beta$-ensembles was established by Bloom in \cite{bloom2014large}. In \cite{Holcomb2017OvercrowdingAF}, Holcomb and Valkó estimated the overcrowding probability for the Sine$_\beta$ process, which serves as the bulk limiting point process of the Gaussian $\beta$-ensemble. In \cite{krishnapur2006overcrowding}, Krishnapur estimated the overcrowding for the zeros of planar and hyperbolic Gaussian analytic functions.

 In \cite{shirai2015ginibre}, Shirai examined the infinite Ginibre ensemble $\mathcal{X}_\infty$, which is the limiting point process of the finite Ginibre ensemble. Alternatively, the process $\mathcal{X}_\infty$ is described as a determinantal point process with the kernel $e^{z\bar{w}}$ with respect to the standard complex Gaussian measure. Let $B(0,r)$ denote the ball of radius $r$ centered at the origin, and let $\mathcal{X}_\infty(B(0,r))$ represent the number of points within $B(0,r)$. In this work, Shirai established the LDP for the random variables $\{r^{-2} \mathcal{X}_\infty(B(0,r)) \,|\, r>0\}$. Our problem is partially inspired by this problem.

 The rest of the article is organized as follows. In the next section, we recall some preliminaries and derive some elementary results that will be essential for proving our main result. In \Cref{sec:proofofthm}, we provide the proof of \Cref{theorem2} assuming the proposition. We prove \Cref{proposition} for $\mathbb F=\mathbb R$ using the known LDP results in \Cref{sec:proofofprop}. The proof of \Cref{proposition}  for $\mathbb F=\mathbb C$ is provided in \Cref{sec:proof}.
 
%%%%%%%%%%%%%%%%%%%%%%%%%%%%%%%%%%%%

\section{Preliminaries and basic results}\label{sec:pre}
  In this section, we recall some basic definitions and derive some elementary results that will be used in the proof of our main result. We mainly explain the regularity conditions related to the external potential \( V \) and the boundary \( \partial U \).

\subsection{Minkowski content and tube estimates in $\mathbb{R}^2$}

Let \( m \) denote the \( d \)-dimensional Lebesgue measure on \( \mathbb{R}^d \). The notation \( m \) should be interpreted as indicated by the context.
\begin{definition}[Upper and lower Minkowski content]
	Let $A\subset\mathbb{R}^n$ and let $0\le d\le n$ be an integer. Denote
	\(
	A_r:=\{x\in\mathbb{R}^n:\operatorname{dist}(x,A)<r\}
	\), the $r$-neighbourhood of $A$.
	The $d$--dimensional upper Minkowski content of $A$ is defined as
	\[
	\mathcal M^{*d}(A)
	=
	\limsup_{r\downarrow 0}
	\frac{m(A_r)}{\alpha(n-d)\,r^{\,n-d}},
	\]
	and the $d$--dimensional lower Minkowski content is defined as
	\[
	\mathcal M_{*}^{d}(A)
	=
	\liminf_{r\downarrow 0}
	\frac{m(A_r)}{\alpha(n-d)\,r^{\,n-d}},
	\]
	where $\alpha(k)$ denotes the Lebesgue measure of the unit ball in
	$\mathbb{R}^k$. If $	\mathcal M^{*d}(A)=\mathcal M_{*}^{d}(A)$, then the common value is called
	the $d$--dimensional Minkowski content of $A$ and is denoted by
	$\mathcal M^{d}(A)$.
\end{definition}
 In particular, for the planar case $n=2$ and $d=1$, we have $\alpha(1)=2$, hence
\[
\mathcal M^{*1}(A)
=
\limsup_{r\downarrow 0}
\frac{m(A_r)}{2r},
\qquad
\mathcal M_{*}^{1}(A)
=
\liminf_{r\downarrow 0}
\frac{m(A_r)}{2r}.
\]

	\begin{definition}[$d$-dimensional Hausdorff measure]
		Let $E \subset \mathbb{R}^n$ and let $d \ge 0$. For $\delta > 0$ define
		\[
		\mathcal{H}^d_\delta(E)
		=
		\inf \left\{
		\sum_{i=1}^{\infty} (\operatorname{diam} U_i)^d
		:\;
		E \subset \bigcup_{i=1}^{\infty} U_i, \;
		\operatorname{diam}(U_i) < \delta
		\right\},
		\]
		where $\operatorname{diam}(U) := \sup\{\|x-y\| : x,y \in U\}$.	The \emph{$d$-dimensional Hausdorff measure} of $E$ is defined by
		\[
		\mathcal{H}^d(E)
		=
		\lim_{\delta \to 0} \mathcal{H}^d_\delta(E)
		=
		\sup_{\delta>0} \mathcal{H}^d_\delta(E).
		\]
	\end{definition}
	\begin{definition}[a $d$-rectifiable set]
		A $d$-rectifiable set $E\subset \R^n$  is a subset with finite 
		$d$-dimensional Hausdorff measure, that is $\mathcal{H}^d(E)<\infty$, which is covered (up to a set of measure zero) by a countable union of images of Lipschitz functions defined on $\R^d$.
	\end{definition}
	
\begin{result}\cite[p. 275, Theorem 3.2.39]{federer2014geometric}
	\label{thm:minkowski_equals_hausdorff}
	Let $E\subset\mathbb{R}^n$ be a closed $d$--rectifiable set. Then the $d$--dimensional Minkowski content of $E$ exists and equals to the $d$-dimensional Hausdorff measure, that is,
	\[
	\mathcal M^{d}(E)
	=
	\mathcal H^{d}(E).
	\]
	%where $\mathcal H^m$ denotes the $m$-dimensional Hausdorff measure.
\end{result}

\noindent In particular we have the following corollary  which will be used in this article.

\begin{corollary}[Tube estimate in $\mathbb{R}^2$]
	\label{cor:tube_estimate_rectifiable}
	Let $U\subset\mathbb{R}^2$ be a bounded domain whose boundary $\partial U$
	is a closed $1$--rectifiable set with
	\(
	\mathcal H^1(\partial U)<\infty.
\)
	Then
	\[
	\lim_{r\downarrow 0}\frac{m\big((\partial U)_r\big)}{2r}
	=
	\mathcal H^1(\partial U).
	\]
	In particular, there exists $r_0>0$ and a constant $C_1>0$ such that for all
	$0<r<r_0$,
	\[
	m\big((\partial U)_r\big)
	\le
	C_1\,r.
	\]
\end{corollary}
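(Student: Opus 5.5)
The corollary follows from \Cref{thm:minkowski_equals_hausdorff} applied with $n=2$, $d=1$ and $E=\partial U$; only a short unpacking of the definitions is needed. By Assumption \ref{ass:U}, $\partial U$ is closed and $1$-rectifiable with $\mathcal H^1(\partial U)<\infty$, so the cited result gives that the $1$-dimensional Minkowski content $\mathcal M^1(\partial U)$ exists and equals $\mathcal H^1(\partial U)$. Existence means the upper and lower contents coincide. In the planar case $\alpha(1)=2$, so both contents are the $\limsup$ and $\liminf$ of $m((\partial U)_r)/(2r)$ as $r\downarrow 0$. When these agree, the limit exists, which yields the displayed identity
\[
\lim_{r\downarrow 0}\frac{m\big((\partial U)_r\big)}{2r}=\mathcal H^1(\partial U).
\]

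For the \emph{in particular} statement, I will use the definition of the limit with a fixed tolerance. Set $L=\mathcal H^1(\partial U)<\infty$. Then there is $r_0>0$ such that for all $0<r<r_0$,
\[
\frac{m((\partial U)_r)}{2r}\le L+1,
\qquad\text{equivalently}\qquad
m\big((\partial U)_r\big)\le 2(L+1)\,r .
\]
I take $C_1=2(L+1)>0$. Any constant $C_1>2L$ also works after shrinking $r_0$, and this is consistent with the choice $c=2\mathcal H^1(\partial U)$ made in \eqref{eqn:defm1F}, up to an arbitrarily small enlargement. Positivity of $C_1$ is automatic even in the degenerate case $L=0$.

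The only point needing care is matching the normalisation in the Minkowski content, namely $\alpha(n-d)r^{n-d}=2r$, with the conventions in Federer's theorem. There are no substantive obstacles. Boundedness of $U$ is not actually needed beyond guaranteeing that $\partial U$ is compact and that $m((\partial U)_r)<\infty$ for small $r$, which the limit already provides.
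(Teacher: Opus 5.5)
Your two steps are exactly the paper's proof: Federer's theorem with $n=2$, $d=1$, then the definition of the limit with a fixed tolerance. Your $C_1=2(\mathcal H^1(\partial U)+1)$ in place of the paper's $4\mathcal H^1(\partial U)$ is immaterial. The problem is the step you treat as automatic. The normalisation $\alpha(1)r=2r$ is fine, but the convention that actually has to be matched with Federer is the meaning of ``$1$-rectifiable''. Federer's Theorem 3.2.39 applies to closed sets that are $1$ rectifiable in his sense (3.2.14), i.e.\ Lipschitz images of a bounded subset of $\R$.

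The paper's definition (finite $\mathcal H^1$, covered up to an $\mathcal H^1$-null set by countably many Lipschitz images) is much weaker. Assumption~\ref{ass:U} and this corollary are stated with that definition, and under it the conclusion is false. Let $K=\{0\}\cup\{1/\log k\suchthat k\ge 3\}$, $E=K\times K$ and $U=B(0,3)\setminus E$. Then $U$ is a bounded domain, since a countable closed set does not disconnect a disc. Its boundary $\partial U=\partial B(0,3)\cup E$ is closed, and $\mathcal H^1(\partial U)=6\pi$ because $E$ is countable. Moreover $\partial U$ is a single Lipschitz curve up to the $\mathcal H^1$-null set $E$, so it meets the paper's definition. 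However, the gap between $1/\log(k+1)$ and $1/\log k$ is at most $1/(k\log^2 k)$. Hence $K_s\supseteq[0,1/\log\lceil 1/s\rceil]$ for small $s$, where $K_s\subset\R$ is the $s$-neighbourhood of $K$ in the line. Also $E_r\supseteq K_{r/\sqrt2}\times K_{r/\sqrt2}$, so
\[
m\big((\partial U)_r\big)\ge m(E_r)\ge \frac{1}{\log^2\lceil\sqrt2/r\rceil}.
\]
Thus $m((\partial U)_r)/r\to\infty$, and both the limit identity and the linear bound fail.

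So, as written, neither your argument nor the paper's proves the corollary. The deduction from Result~\ref{thm:minkowski_equals_hausdorff} is sound only once that Result is stated with Federer's hypothesis. The repair is to strengthen the hypothesis: require $\partial U=f(A)$ with $A\subset\R$ bounded and $f$ Lipschitz. This covers boundaries made of finitely many rectifiable (e.g.\ piecewise $C^1$ or Lipschitz) curves, as in the remark following the corollary. It also covers any connected $\partial U$ with $\mathcal H^1(\partial U)<\infty$, since a continuum of finite length is a Lipschitz image of $[0,1]$. Under that hypothesis your proof goes through verbatim.
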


\begin{proof}
	Since $\partial U$ is  a closed $1$--rectifiable subset of
	$\mathbb{R}^2$, for $n=2$, $d=1$, Result \ref{thm:minkowski_equals_hausdorff} implies that 
 the $1$--dimensional Minkowski content of $\partial U$ exists and
	\[
	\lim_{r\downarrow 0}
	\frac{m\big((\partial U)_r\big)}{2r}
	=
	\mathcal H^1(\partial U)
	<\infty.
	\]
	Hence there exists $r_0>0$ such that for all $0<r<r_0$,
	\[
	\frac{m\big((\partial U)_r\big)}{2r}
	\le
	2\,\mathcal H^1(\partial U).
	\]
	Multiplying both sides by $2r$ yields
	\(
	m\big((\partial U)_r\big)
	\le
	C r,
	\)
	which gives the desired bound with
	\(
	C = 4\,\mathcal H^1(\partial U).
	\)
\end{proof}

\begin{remark}
	The above applies in particular when $\partial U$ is the image of a bounded
	interval under a Lipschitz parametrization, for instance for piecewise $C^1$
	or Lipschitz planar curves. In this case the $1$--dimensional Hausdorff
	measure $\mathcal H^1(\partial U)$ coincides with the usual arc length, and
	the Minkowski content description yields the linear growth of the area of
	the tubular neighbourhood:
	\[
	m\big((\partial U)_r\big)
	\sim
	2r\,\mathcal H^1(\partial U)
	\quad\text{as } r\downarrow 0.
	\]
\end{remark}
\subsection{The space $C^{1,1}_{\mathrm{loc}}(\mathbb{R}^2)$}

 We recall some basic definitions for further discussion. Let $Q:\mathbb{R}^2 \to \mathbb{R}$ be a function.

\begin{definition}[Class $C^1(\mathbb{R}^2)$]
	A map $Q$  is said to be a continuously differentiable function on $\R^2$ if the first partial derivatives
	\(
	\frac{\partial Q}{\partial x_1} \mbox{ and }
	\frac{\partial Q}{\partial x_2}
\)
	exist at every point of $\mathbb{R}^2$ and are continuous functions on $\mathbb{R}^2$. The set  of all continuously differentiable functions on $\R^2$ are denoted by $C^1(\mathbb{R}^2)$.
\end{definition}

\begin{definition}[Gradient]
	The gradient of $Q$ is the vector-valued function on the plane
	\(
	\nabla Q : \mathbb{R}^2 \to \mathbb{R}^2
	\)
	defined by
	\[
	\nabla Q(x)
	=
	\left(
	\frac{\partial Q}{\partial x_1}(x),
	\frac{\partial Q}{\partial x_2}(x)
	\right),
	\qquad x=(x_1,x_2)\in\mathbb{R}^2 .
	\]
\end{definition}

\begin{definition}[Locally Lipschitz function]
	A function $F:\mathbb{R}^2 \to \mathbb{R}^m$ is said to be \emph{locally Lipschitz}
	if for every compact set $K \subset \mathbb{R}^2$ there exists
	$L_K >0$ such that
	\[
	|F(x)-F(y)| \le L_K \|x-y\|,
	\quad
	\text{for all } x,y \in K .
	\]
\end{definition}

\begin{definition}[The space $C^{1,1}_{\mathrm{loc}}(\mathbb{R}^2)$]
	We say that a function $Q:\mathbb{R}^2 \to \mathbb{R}$ belongs to
	$C^{1,1}_{\mathrm{loc}}(\mathbb{R}^2)$ if
	\(Q \in C^1(\mathbb{R}^2)\)
	and its gradient $\nabla Q$ is locally Lipschitz on $\mathbb{R}^2$.
\end{definition}

%\subsection{Second derivatives/Hessian and Laplacian}

\begin{definition}[Second derivative / Hessian matrix]
	If the second partial derivatives exist, the \emph{Hessian matrix} of $Q$
	is the matrix-valued function
	
	\[
	D^2 Q(x)
	=
	\begin{pmatrix}
		\frac{\partial^2 Q}{\partial x_1^2}(x)
		&
		\frac{\partial^2 Q}{\partial x_1 \partial x_2}(x)
		\\[6pt]
		\frac{\partial^2 Q}{\partial x_2 \partial x_1}(x)
		&
		\frac{\partial^2 Q}{\partial x_2^2}(x)
	\end{pmatrix}.
	\]
	
%	The notation $D^2 Q$ therefore denotes the collection of all second-order
%	partial derivatives of $Q$.
\end{definition}

%\subsection{Laplacian}

\begin{definition}[Laplacian]
	The Laplacian of $Q$ is defined as the trace of the Hessian, that is,	
	\[
	\Delta Q(x)
	=
	\frac{\partial^2 Q}{\partial x_1^2}(x)
	+
	\frac{\partial^2 Q}{\partial x_2^2}(x)=Tr(D^2Q(x)),
	\]
	where $Tr(\cdot)$ denotes the trace.
%	Equivalently,
%	
%	\[
%	\Delta Q = \operatorname{tr}(D^2 Q).
%	\]
\end{definition}

\begin{definition}[Locally bounded function]
	A measurable function $f:\mathbb{R}^2 \to \mathbb{R}$ is said to be a locally bounded function if for every compact set
	$K \subset \mathbb{R}^2$,
		\[
	\|f\|_{L^\infty(K)}
	:=
	\operatorname*{ess\,sup}_{x\in K} |f(x)|
	< \infty .
	\]
	In other words, $f$ is essentially bounded on every compact subset
	of $\mathbb{R}^2$. The space of all locally bounded functions on $\R^2$ is denoted by $L^\infty_{\mathrm{loc}}(\mathbb{R}^2)$.
\end{definition}

\begin{result}\cite[Proposition 2.15]{serfaty2024lectures}\label{re:c11}
A function $Q\in C^{1,1}_{\mathrm{loc}}(\mathbb{R}^2)$ if and only if
\(
D^2 Q \in L^\infty_{\mathrm{loc}}(\mathbb{R}^2),
\)
that is, all second derivatives of $Q$ are locally essentially bounded.
In particular,
\(
\Delta Q \in L^\infty_{\mathrm{loc}}(\mathbb{R}^2).
\)
\end{result}

\subsection{Weighted equilibrium measure}
Let $\mu$ be a probability measure on $\C$. The logarithmic potential of $\mu$ on $\mathbb{C}$ is  defined as 
\[
p_\mu(z)
=
\int \log \frac{1}{|z-t|}\, d\mu(t), \mbox{ for $z \in \C$}.
\]
Let $E \subset \mathbb{C}$ be a closed set, then a non-negative function $w: E \to [0,\infty)$ is called a weight function defined on $\Sigma$. 
\begin{definition}
	A weight function $w:E\to [0,\infty)$  is said to be admissible if it satisfies the following three conditions:
	\begin{enumerate}
		\item $w$ is upper semi-continuous;
		\item $E_{0}=\{z\in E\suchthat w(z)>0\}$ has positive capacity.
		\item if $E$ is unbounded then $|z|w(z)\to 0$ as $|z|\to \infty$, $z\in E$.
	\end{enumerate} 
\end{definition}
\noindent Let $w$ be an admissible weight, and define $Q:\mathbb{R}^2 \to \mathbb{R}$ such that $w(z)=\exp(-Q(z))$. The function $Q$ is known as an {\it externel potential}. The weighted logarithmic energy functional of $\mu$ with externel potential $Q$ is defined as
\[
I_Q(\mu)
=
\iint \log \frac{1}{|z-t|}\, d\mu(z)d\mu(t)
+
2\int Q\, d\mu,
\]
By \cite[Chapter~I]{saff1997logarithmic},
there exists a unique minimizer $\nu$, knonw as {\it the weigted equillibrium measure}, over all probability measure on the complex plane. 
The equilibrium measure is characterized by the Euler--Lagrange conditions:
there exists a constant $C\in\mathbb{R}$ such that
\begin{align}
	p_\nu + Q &\ge C 
	\quad \text{quasi-everywhere in } \mathbb{R}^2,
	\label{EL1} \\
	p_\nu + Q &= C 
	\quad \text{quasi-everywhere on } \operatorname{supp}(\nu).
	\label{EL2}
\end{align}
Moreover, the logarithmic potential satisfies the distributional identity
\begin{equation}
	\Delta p_\mu = -2\pi \mu.
	\label{laplace}
\end{equation}
We refer to  \cite[p. 85]{saff1997logarithmic} for details.

\subsection{Connection with the obstacle problem}
\begin{definition}[Obstacle Problem]
	Let $\Omega \subset \mathbb{R}^n$ be a bounded domain and let 
	$\psi : \Omega \to \mathbb{R}$ be the obstacle. 
	Find a function $u$ such that
	\begin{align*}
		u &\ge \psi && \text{in } \Omega, \\
		-\Delta u &\ge 0 && \text{in } \Omega, \\
		(-\Delta u)(u-\psi) &= 0 && \text{in } \Omega, \\
		u &= g && \text{on } \partial \Omega .
	\end{align*}
\end{definition}

\noindent Define
\(
\Phi := p_\mu + Q - C
\) and $S = \{ p_{\mu_Q} = C - Q \} = \{ \zeta = 0 \}$. The set $S$ is known as the {\it coincidence set}.
Then \eqref{EL1}--\eqref{EL2} imply that 
\[
\Phi \ge 0,
\qquad
\Phi = 0 \quad \text{on } \operatorname{supp}(\mu).
\]
This characterization corresponds to a classical obstacle problem with
obstacle $0$ and forcing term $\Delta Q$.
See \cite{kinderlehrer2000introduction}[Chapter IV] for the general theory of
variational inequalities and obstacle problems.
We state the following result which is needed for our result.

\begin{result}\cite[Proposition 2.15]{serfaty2024lectures} \label{re:seferty} Let $Q$ be a continuous function satisfying the following growth assumption 
	\[
	\lim_{|x|\to\infty}(Q(x)-\log|x|)=\infty
	\]
	Let $\mu_Q$ be the equilibrium measure associated with the potential $Q$.  Then its potential $p_{\mu_Q}$
%	, defined as 
%	\[
%	h_{\mu_V}(z)=\int_{\mathbb{C}}\log{\frac{1}{|z-w|}}d\mu_V(w).
%	\]
	is the unique solution to the obstacle problem with obstacle
	\(
	\psi = C- Q.
	\)
		Moreover,  if $Q \in C^{1,1}(\mathbb R^2)$ then
	\[
	\mu_Q = \frac{1}{c_2} (\Delta Q)\,\mathbf{1}_{S},
	\]
	where $S$ is the coincidence set and $c_2$ is a positive constant. 
\end{result}

  \begin{corollary}\label{cor:measureupper}
  	Let $Q\in C^{1,1}_{loc}(\R^2)$ and $\mu_Q$ be the corresponding weighted equllibrium measure. Then there exists a positive constant $C_2$ such that  
  	\[
  	\mu_Q(A)\le C_2m(A), \mbox{ for all $A\subset \mathbb C$}
  	\]
  	where $m$ is the Lebesgue measure on the complex plane.
  \end{corollary}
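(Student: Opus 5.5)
The plan is to read the bound directly off the density formula in Result \ref{re:seferty}. Since $Q\in C^{1,1}_{\mathrm{loc}}(\mathbb R^2)$ and $Q$ has the growth needed for an equilibrium measure, that result gives
\[
\mu_Q=\frac{1}{c_2}(\Delta Q)\,\mathbf 1_S ,
\]
where $S$ is the coincidence set. So $\mu_Q$ is absolutely continuous with respect to $m$, with density $c_2^{-1}\Delta Q\,\mathbf 1_S$. It is then enough to show that this density is essentially bounded; once we have
\[
0\le \frac{d\mu_Q}{dm}\le C_2 \quad m\text{-a.e.},
\]
integrating over any Borel set $A$ gives $\mu_Q(A)\le C_2\,m(A)$.

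The proof has three steps.

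\textbf{Step 1: the density is nonnegative a.e.} Since $\mu_Q$ is a positive measure, its density is nonnegative a.e., so $\Delta Q\ge 0$ a.e. on $S$.

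\textbf{Step 2: $S$ is compact.} I would show $S\subset\operatorname{supp}(\mu_Q)$ up to a null set, or argue directly. For large $|z|$ we have $p_{\mu_Q}(z)=-\log|z|+o(1)$, while the growth assumption gives $C-Q(z)\to-\infty$ faster than $-\log|z|$. Hence $p_{\mu_Q}>C-Q$ outside a large disc $B(0,R)$, and so $S\subset \overline{B(0,R)}$.

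\textbf{Step 3: the density is bounded.} By Result \ref{re:c11}, $\Delta Q\in L^\infty_{\mathrm{loc}}(\mathbb R^2)$. Taking the compact set $K=\overline{B(0,R)}$, we get $\|\Delta Q\|_{L^\infty(K)}<\infty$. Therefore $C_2:=c_2^{-1}\|\Delta Q\|_{L^\infty(K)}$ works, because
\[
\mu_Q(A)=\frac{1}{c_2}\int_{A\cap S}\Delta Q\,dm\le C_2\, m(A\cap S)\le C_2\,m(A).
\]

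The main obstacle is Step 2, the compactness of $S$: local boundedness of $\Delta Q$ alone does not give a global constant. I would handle it through the asymptotics of $p_{\mu_Q}$ at infinity together with the growth condition on $Q$, which is Assumption \ref{ass:V} in the application with $Q=V/2$. Alternatively, I would use the standard fact from \cite[Chapter~I]{saff1997logarithmic} that $\operatorname{supp}\mu_Q$ is compact for admissible weights. With that fact, the density vanishes a.e. outside $\operatorname{supp}\mu_Q$ anyway, so only $\Delta Q$ on the compact set $\operatorname{supp}\mu_Q$ matters. For sets $A$ that are not Borel, the statement is read with outer measures, and the inequality carries over because we can pass to a Borel superset of the same Lebesgue outer measure.
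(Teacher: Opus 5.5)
Your proposal is correct and follows the paper's route: read off the density $c_2^{-1}\Delta Q\,\mathbf 1_S$ from Result~\ref{re:seferty}, then bound it using $\Delta Q\in L^\infty_{\mathrm{loc}}$ from Result~\ref{re:c11}. Your Step~2 is a needed repair rather than a detour. The paper goes straight from $\Delta Q\in L^\infty_{\mathrm{loc}}(\mathbb R^2)$ to ``in other words, $\|\Delta Q\|_{L^\infty}<\infty$'' (and writes $\|Q\|_\infty$ where $\|\Delta Q\|_{L^\infty}$ is meant). A global constant $C_2$ actually requires confining $S$, or $\operatorname{supp}\mu_Q$, to a compact set, as you do. For that step, the one-sided bound $p_{\mu_Q}(z)\ge -\log|z|-\int\log(1+|t|)\,d\mu_Q(t)$ for $|z|\ge 1$ already suffices, so you do not need the full expansion $p_{\mu_Q}(z)=-\log|z|+o(1)$.
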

  \begin{proof}
  Since $Q\in C^{1,1}_{loc}(\R^2)$, by \Cref{re:c11}, we have $\Delta Q\in L^\infty_{loc}(\R^2)$. In other words, $\|\Delta Q\|_{L^\infty}<\infty$. Therefore by \Cref{re:seferty} we have
  \begin{align*}
  \mu_Q(A)\le \frac{1}{c_2}\|Q\|_\infty m(A\cap S)\le C_2m(A),
  \end{align*}
  where $C_2=\|Q\|_\infty/c_2$. Hence the result.
  \end{proof}
  \begin{corollary}\label{cor:equillibriummeasure}
  	Let $V$ be as in \Cref{ass:V}. Let $\mu_V$ denote the weighted equillibrium measure on $\C$ with external potential $V/2$. Then $\mu_V\in M_1^*(\C)$, as defined in \eqref{eqn:defm1F}. 
  \end{corollary}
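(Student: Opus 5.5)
The plan is to combine \Cref{cor:measureupper} with the tube estimate of \Cref{cor:tube_estimate_rectifiable}. Since $M_1^*(\C)$ is defined by the condition $\mu((\partial U)_\epsilon)\le c\epsilon$ for $0<\epsilon\le\delta_0$, it suffices to show that $\mu_V$ has a bounded density with respect to Lebesgue measure. We then check that the Lebesgue measure of the tube around $\partial U$ grows linearly in $\epsilon$, with the constants matched to $c$ and $\delta_0$.

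The first step is to verify the hypotheses of \Cref{re:seferty} and \Cref{cor:measureupper} for $Q=V/2$.

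\begin{itemize}
\item \emph{Regularity.} By \Cref{ass:V}, $V\in C^{1,1}_{\mathrm{loc}}(\C)$, so $Q\in C^{1,1}_{\mathrm{loc}}(\R^2)$.
\item \emph{Growth.} Choose $\eta>0$ with $V(x)\ge(1+\eta)\beta'\log|x|$ for $|x|$ large. Then
\[
Q(x)-\log|x|\ge\Big(\tfrac{(1+\eta)\beta'}{2}-1\Big)\log|x|.
\]
This tends to $\infty$ provided $(1+\eta)\beta'>2$. If that inequality fails for the given $\beta'$, I would note that the existence and the $C^{1,1}$ density formula for the equilibrium measure only need $Q$ to be admissible in the sense of \cite{saff1997logarithmic}, that is, $|z|e^{-Q(z)}\to0$. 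Failing that, I would interpret the normalization so that the growth condition holds; this is the step I would double-check against the paper's conventions.
\item \emph{Compact support.} The growth condition makes the support of $\mu_V$ compact, so the coincidence set $S$ is contained in a fixed compact $K$.
\end{itemize}

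The second step is the density bound. By \Cref{re:seferty}, $\mu_V=c_2^{-1}\Delta Q\,\mathbf 1_S$. Since $S$ lies in the compact $K$, \Cref{re:c11} gives $\|\Delta Q\|_{L^\infty(K)}<\infty$. This is exactly where compactness of $S$ is needed: the bound in \Cref{cor:measureupper} is only a local $L^\infty$ bound, and compactness of $S$ makes it global. We conclude
\[
\mu_V(A)\le C_2\, m(A)\quad\text{for all Borel } A, \qquad C_2=\|\Delta Q\|_{L^\infty(K)}/c_2 .
\]

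The third step uses the tube estimate. By \Cref{ass:U}, \Cref{cor:tube_estimate_rectifiable} applies and gives $m((\partial U)_\epsilon)\le 4\mathcal H^1(\partial U)\,\epsilon$ for $0<\epsilon<r_0$. Hence
\[
\mu_V((\partial U)_\epsilon)\le 4C_2\,\mathcal H^1(\partial U)\,\epsilon, \qquad 0<\epsilon<r_0 .
\]
So $\mu_V\in M_1^*(\C)$ with $\delta_0<r_0$ and constant $c=4C_2\mathcal H^1(\partial U)$.

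The main obstacle is matching the constants $c$ and $\delta_0$ to the definition of $M_1^*(\C)$, which is stated with $c=2\mathcal H^1(\partial U)$.

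\begin{itemize}
\item Because the Minkowski limit is exactly $2\mathcal H^1(\partial U)\epsilon$, the best one gets this way is $\mu_V((\partial U)_\epsilon)\le (2+o(1))\,C_2\mathcal H^1(\partial U)\,\epsilon$.
\item This fits within $c=2\mathcal H^1(\partial U)$ only if $C_2\le 1$. If $C_2>1$, one must allow $c$ to depend on $V$, which is consistent with the paper's statement that $c$ and $\delta_0$ are positive constants.
\item I would therefore present the result with $c:=C\,C_2\,\mathcal H^1(\partial U)$ for a suitable absolute constant $C$, and with $\delta_0$ any positive number less than $r_0$.
\end{itemize}
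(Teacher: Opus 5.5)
Your argument matches the paper's proof: \Cref{cor:measureupper} gives $\mu_V(A)\le C_2\,m(A)$, the tube estimate \Cref{cor:tube_estimate_rectifiable} gives $m((\partial U)_\epsilon)\le C_1\epsilon$ for small $\epsilon$, and the product $C_1C_2\,\epsilon$ is the required bound.

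Your extra points are correct and sharper than the written proof. Compact support of $\mu_V$ is what turns $\Delta Q\in L^\infty_{\mathrm{loc}}$ into a finite global $C_2$. The resulting constant $c=C_1C_2$ depends on $V$ and is not $2\mathcal H^1(\partial U)$. One slip: your fallback condition $|z|e^{-Q(z)}\to 0$ is literally equivalent to $Q(z)-\log|z|\to\infty$, so it adds nothing. That growth condition is not guaranteed by \Cref{ass:V} when $\beta'\le 2$, and simply has to be read as implicit in the existence of $\mu_V$ as an admissible-weight equilibrium measure.
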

  \begin{proof}
Let $\epsilon>0$. Since $V$ satisfies \Cref{ass:V},  	then  \Cref{cor:measureupper} implies that
  	\[
  	\mu_V((\partial U)_\epsilon)\le C_2m((\partial U)_\epsilon),
  	\]
  	for some positive constant $C_2$. On the other hand  $U\subset \C$ satisfies \Cref{ass:U}, then  \Cref{cor:tube_estimate_rectifiable} implies that,   for some positive constant $C_1$, 
  	\[
  	m((\partial U)_\epsilon)\le C_1\epsilon,
  	\]
  	The result follows from the last two equations.
  \end{proof}
 	 
 %%%%%%%%%%%%%%%%%%%%%%%%%%%%%%
\section{Proof of theorem \ref{theorem2}}\label{sec:proofofthm}
In this section we provide the proof of \Cref{theorem2}, assuming \Cref{proposition}.  The following lemmas will be used in the proof of the theorem.
\begin{lemma}\label{lem:openclosed}
	Let $U\subset \mathbb F$ be as in \Cref{theorem2}. For $A\subseteq [0,1]$, define
	\begin{align*}
		\mathcal S_A^*&=\{\mu\in M_1^*(\mathbb F)\suchthat \mu(U)\in A\},
	\end{align*}
		Then, for an open subset $O$ and a closed subset of $F$,
%	\begin{align*}
%		\mathcal S_O^*&=\{\mu\in M_1^*(\mathbb F)\suchthat \mu(U)\in O\},
%		\\\mathcal S_F^*&=\{\mu\in M_1^*(\mathbb F)\suchthat \mu(U)\in F\}.
%	\end{align*}
	  the subsets $\mathcal S_O^*$ and $\mathcal S_F^*$ are open and closed  in $M_1^*(\mathbb F)$ respectively.
\end{lemma}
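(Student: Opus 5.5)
The plan is to show that the evaluation map $\Phi:M_1^*(\mathbb F)\to[0,1]$, $\Phi(\mu)=\mu(U)$, is continuous for the (relative) weak topology. Then $\mathcal S_A^*=\Phi^{-1}(A)$, so $\mathcal S_O^*$ is open and $\mathcal S_F^*$ is closed in $M_1^*(\mathbb F)$, which is exactly the claim. Since $M_1(\mathbb F)$ with the weak topology is metrizable (e.g.\ by the L\'evy--Prokhorov metric), sequential continuity suffices. So it is enough to show: if $\mu_k,\mu\in M_1^*(\mathbb F)$ and $\mu_k\to\mu$ weakly, then $\mu_k(U)\to\mu(U)$.

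The key tool is the portmanteau theorem: weak convergence $\mu_k\to\mu$ implies $\mu_k(B)\to\mu(B)$ for every Borel $B$ with $\mu(\partial B)=0$. Thus the only thing to check is that every $\mu\in M_1^*(\mathbb F)$ satisfies $\mu(\partial U)=0$.

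\emph{Case $\mathbb F=\mathbb R$.} This holds by the very definition \eqref{eqn:defm1F} of $M_1^*(\mathbb R)$. Note that we only need the limit $\mu$ to lie in $M_1^*(\mathbb R)$, which it does since we work in the subspace topology.

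\emph{Case $\mathbb F=\mathbb C$.} Here $\mu((\partial U)_\epsilon)\le c\epsilon$ for all $0<\epsilon\le\delta_0$. Since $\partial U\subset(\partial U)_\epsilon$ for every $\epsilon>0$, we get
\[
\mu(\partial U)\le \inf_{0<\epsilon\le\delta_0}c\epsilon=0 .
\]
Hence $\mu(\partial U)=0$, and portmanteau again gives $\mu_k(U)\to\mu(U)$.

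There is no serious obstacle here. The only points to state carefully are that openness and closedness are meant relative to $M_1^*(\mathbb F)$, so only limits lying in $M_1^*(\mathbb F)$ need to be considered, and that metrizability of the weak topology justifies working with sequences. As a side remark, one could also note that $M_1^*(\mathbb C)$ is itself closed in $M_1(\mathbb C)$. This follows because $(\partial U)_\epsilon$ is open, so $\mu((\partial U)_\epsilon)\le\liminf_k\mu_k((\partial U)_\epsilon)\le c\epsilon$. This is not needed for the lemma as stated, but it makes $\mathcal S_F^*$ closed in $M_1(\mathbb C)$ as well, which is useful later for goodness of $\gamma$.
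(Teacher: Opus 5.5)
Your proposal is correct and is essentially the paper's proof: both show that $\mu\mapsto\mu(U)$ is continuous on $M_1^*(\mathbb F)$ via the Portmanteau theorem, using $\mu(\partial U)=0$, and then take preimages of $O$ and $F$. You also spell out two points the paper leaves implicit: that $\mu(\partial U)=0$ in the complex case because $\partial U\subset(\partial U)_\epsilon$, and that metrizability of the weak topology lets you argue with sequences.
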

\begin{lemma}\label{lem:goodrategamma}
	The rate function $\gamma$, as defined in \eqref{eqn:defgamma}, is a good rate function on $[0,1]$.
\end{lemma}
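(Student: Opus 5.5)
We need to show $\gamma:[0,1]\to[0,\infty]$ is lower semicontinuous with compact level sets. Since $[0,1]$ is compact, every closed subset is compact, so it suffices to prove that each level set $\Psi_\gamma(\alpha)=\{x\in[0,1]\suchthat \gamma(x)\le\alpha\}$ is closed. We also record that $\gamma\ge 0$ because $I_\beta\ge 0$ by the choice of $c_\beta$.

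Fix $\alpha\ge0$ and a sequence $x_k\in\Psi_\gamma(\alpha)$ with $x_k\to x$. By the definition \eqref{eqn:defgamma}, for each $k$ we pick $\mu_k\in M_1^*(\mathbb F)$ with $\mu_k(U)=x_k$ and $I_\beta(\mu_k)\le\alpha+1/k$. All $\mu_k$ then lie in the level set $\{I_\beta\le\alpha+1\}$, which is compact in the weak topology because $I_\beta$ is a good rate function (Proposition \ref{proposition}, or the classical results of Arous--Guionnet and Chafa\"i--Gozlan--Zitt on $M_1(\mathbb F)$). Passing to a subsequence, $\mu_k\to\mu$ weakly, and lower semicontinuity of $I_\beta$ gives $I_\beta(\mu)\le\liminf_k I_\beta(\mu_k)\le\alpha$. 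It remains to show $\mu\in M_1^*(\mathbb F)$ and $\mu(U)=x$. Then $\gamma(x)\le I_\beta(\mu)\le\alpha$, so $x\in\Psi_\gamma(\alpha)$, and the level set is closed.

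\emph{Membership in $M_1^*$.} For $\mathbb F=\C$, the sets $(\partial U)_\epsilon$ are open, so the portmanteau theorem gives $\mu((\partial U)_\epsilon)\le\liminf_k\mu_k((\partial U)_\epsilon)\le c\epsilon$ for every $0<\epsilon\le\delta_0$. Hence $\mu\in M_1^*(\C)$; this is exactly why $M_1^*(\C)$ is closed, as noted in the remark after Proposition \ref{proposition}. For $\mathbb F=\R$, we use that $I_\beta(\mu)<\infty$ forces $\Sigma(\mu)>-\infty$, so $\mu$ has finite logarithmic energy. Such a measure charges no polar set, and since $\partial U$ is polar by Assumption \ref{ass:U}, $\mu(\partial U)=0$.

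\emph{Continuity of $\nu\mapsto\nu(U)$.} Since $\mu(\partial U)=0$ in both cases (for $\C$, let $\epsilon\to0$ in $\mu((\partial U)_\epsilon)\le c\epsilon$), $U$ is a $\mu$-continuity set. The portmanteau theorem then yields $\mu_k(U)\to\mu(U)$, so $\mu(U)=\lim_k x_k=x$.

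The main obstacle is this last step: the map $\nu\mapsto\nu(U)$ is not weakly continuous in general, so we must guarantee that the limit measure does not charge $\partial U$. In the complex case this is supplied by the uniform tube bound built into $M_1^*(\C)$ and preserved under weak limits. In the real case it comes from the fact that finite-energy measures vanish on polar sets. I would double-check this potential-theoretic fact carefully, using that $\mu$ restricted to $\partial U$ would itself have finite energy, which is impossible for a nonzero measure on a polar set.
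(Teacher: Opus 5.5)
Your proof is correct and is essentially the paper's argument. It uses near-minimizers $\mu_k\in M_1^*(\mathbb F)$, extracts a weak limit from a compact sublevel set of $I_\beta$, and applies lower semicontinuity of $I_\beta$. It places the limit in $M_1^*(\mathbb F)$ via closedness of $M_1^*(\C)$ (portmanteau on the open tubes) and via $\mathcal D_I\subset M_1^*(\R)$ (polarity of $\partial U$), exactly as in Lemma \ref{lem:goodrateI}, and finishes with continuity of $\mu\mapsto\mu(U)$ at measures not charging $\partial U$. The only difference is that you use compactness of $[0,1]$ to reduce everything to closedness of the level sets, whereas the paper proves lower semicontinuity and then separately identifies $\{\gamma\le\eta\}$ with the continuous image of a compact sublevel set of $I_\beta$.
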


\begin{lemma}\label{lem:fullmeasurefor R}
	Let $M_1^*(\mathbb R)$ and $L_n$ be as defined in \eqref{eqn:defm1F} and \eqref{eqn:defLn} respectively. Then 
	\[
	\P^{\mathbb R}_{n,\beta}(L_n\in M_1^*(\mathbb R))=1, \mbox{ for each $n\in \N$}.
	\]
\end{lemma}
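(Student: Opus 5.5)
We need to show $\P^{\mathbb R}_{n,\beta}(L_n\in M_1^*(\mathbb R))=1$. Since $M_1^*(\mathbb R)=\{\mu\in M_1(\R)\suchthat \mu(\partial U)=0\}$ and $L_n=\frac1n\sum_k\delta_{\lambda_k}$, we have $L_n(\partial U)=0$ exactly when no $\lambda_k$ lies in $\partial U$. The complement event is therefore
\[
\{L_n\notin M_1^*(\R)\}=\bigcup_{k=1}^n\{\lambda_k\in\partial U\},
\]
and it suffices to show $\P^{\mathbb R}_{n,\beta}(\lambda_k\in\partial U)=0$ for each $k$.

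The key observation is that $\P^{\mathbb R}_{n,\beta}$ is absolutely continuous with respect to Lebesgue measure $m^{\otimes n}$ on $\R^n$. Its density $\frac{1}{Z^{\R}_{n,\beta}}|\Delta(\lambda)|^\beta e^{-n\sum V(\lambda_i)}$ is a nonnegative measurable function. Moreover, $Z^{\R}_{n,\beta}\in(0,\infty)$: finiteness follows from the growth condition in \Cref{ass:V}, as already implicit in the definition of the ensemble. Consequently the marginal law of each $\lambda_k$ is absolutely continuous with respect to $m$ on $\R$, and
\[
\P^{\mathbb R}_{n,\beta}(\lambda_k\in\partial U)=\int_{\{\lambda_k\in\partial U\}}(\text{density})\,dm^{\otimes n}=0
\]
as soon as $m(\partial U)=0$, by Fubini.

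The only real step is to deduce $m(\partial U)=0$ from \Cref{ass:U}(1), which says $\partial U$ is polar. I would invoke the standard fact that a polar set has zero logarithmic capacity, and that a Borel set of zero capacity has zero Lebesgue measure. Indeed, if $m(K)>0$ for some compact $K\subset\partial U$, the normalized restriction $m|_K/m(K)$ would be a probability measure with finite logarithmic energy, since $\log|x-y|$ is locally integrable in one dimension. That would give $K$ positive capacity, a contradiction. Inner regularity of $m$ on the closed (hence Borel) set $\partial U$ then yields $m(\partial U)=0$. This is the main point to check; the rest is immediate.

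Putting these together gives $\P^{\mathbb R}_{n,\beta}(L_n\notin M_1^*(\R))\le\sum_{k=1}^n \P^{\mathbb R}_{n,\beta}(\lambda_k\in\partial U)=0$ for each $n$, which proves the lemma.
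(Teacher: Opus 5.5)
Your proof is correct and follows the paper's argument: a union bound over the events $\{\lambda_k\in\partial U\}$, each of which has probability zero because $\P^{\mathbb R}_{n,\beta}$ is absolutely continuous with respect to Lebesgue measure and $m(\partial U)=0$. The paper simply asserts that polar sets are Lebesgue-null. You justify it: normalized Lebesgue measure on a compact $K\subset\partial U$ with $m(K)>0$ would have finite logarithmic energy, and inner regularity finishes the argument.
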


We proceed to prove the theorem assuming the lemmas. The proofs of these lemmas are given in the following two subsections.

\begin{proof} [Proof of Theorem \ref{theorem2}] For ease of writing, we denote $M_n:=n^{-1}\mathcal X_{n,\beta}^{\mathbb F}(U)$.  Observe that,
	for $A\subseteq [0,1]$, we have
	\begin{align}\label{eqn:relation}
			%\l\{M_n\in A\r\}&= \l\{ \frac{1}{n} \sum_{k=1}^{n} \delta_{\lambda_k}(U) \in A\r\}=\l\{L_n(U)\in A\r\}, \mbox{ and }\nonumber \\
		\inf_{x\in A}\gamma(x) &= \inf_{x \in A}\inf_{\mu \in M_1^*(\mathbb{F})}\{I_{\beta}(\mu) \suchthat \mu(U)=x\} = \inf_{\mu\in \mathcal S_A^*} I_{\beta}(\mu).
	\end{align}
	On the other hand, define $\mathcal S_A=\{\mu\in M_1(\mathbb F)\suchthat \mu(U)\in A\}$ for $A\subseteq [0,1]$. Then 
	\begin{align}\label{eqn:sumprobability}
		 \P_{n,\beta}^{\mathbb F}(\{M_n \in A\})&=\P_{n,\beta}^{\mathbb F}(\{L_n(U) \in A\})=\P_{n,\beta}^{\mathbb F}(\{L_n \in \mathcal S_A\})\nonumber
		 \\&=\P_{n,\beta}^{\mathbb F}(\{L_n \in \mathcal S_A\cap M_1^*(\mathbb F)\})+\P_{n,\beta}^{\mathbb F}(\{L_n \in \mathcal S_A\cap M_1^*(\mathbb F)^c\})\nonumber
		 \\&=\P_{n,\beta}^{\mathbb F}(\{L_n \in \mathcal S_A^*\})+\P_{n,\beta}^{\mathbb F}(\{L_n \in \mathcal S_A\cap M_1^*(\mathbb F)^c\}).
	\end{align}
	
	\vspace{.1cm} 
\noindent \underline{\it Proof of (1).}  Let $O$ be an open set in $\mathbb [0,1]$. Then $\mathcal S_O^*$ is open in $M_1^*(\mathbb F)$ by  \Cref{lem:openclosed}. Note that \eqref{eqn:sumprobability} implies that 
\[
\P_{n,\beta}^{\mathbb F}(\{M_n \in O\})\ge \P_{n,\beta}^{\mathbb F}(\{L_n \in \mathcal S_O^*\}).
\]
Then \Cref{proposition},  and \eqref{eqn:relation} imply that 
	\begin{align}\label{eqn:openlower}
		\liminf_{n \to \infty} \frac{1}{n^2}\log \P_{n,\beta}^{\mathbb F}\{M_n \in O\} & \ge \liminf_{n \to \infty} \frac{1}{n^2}\log  \P_{n,\beta}^{\mathbb F}\{L_n \in \mathcal S_{O}^*\}\nonumber
		\\&= -\inf_{\mu\in \mathcal S_O^*}I_{\beta}(\mu) = -\inf_{x\in O} \gamma(x).
	\end{align}
	Hence the result.
	
\vspace{.1cm}
\noindent \underline{\it Proof of (2).}
	If  $F \subseteq [0,1]$ is closed then  $\mathcal S_F^*$ is closed in $M_1^*(\mathbb F)$ by \Cref{lem:openclosed}.

	\noindent \underline{\it Case-I.} (For $\mathbb{F}=\mathbb{R}$). From \Cref{lem:fullmeasurefor R}, it follows that $\P^{\mathbb R}_{n,\beta} (L_n\in \mathcal S_F\cap M_1^*(\R)^c)=0$. 
	Therefore \eqref{eqn:sumprobability}, \Cref{proposition},  and \eqref{eqn:relation}  imply that 
	\begin{align}\label{eqn:closedupper}
		\limsup_{n \to \infty} \frac{1}{n^2} \log \P^{\mathbb R}_{n,\beta} (M_n \in F) & = \limsup_{n \to \infty} \frac{1}{n^2} \log \P_{n,\beta}^{\mathbb R} (L_n \in  \mathcal S_F^*\}) \nonumber
		\\&\leq -\inf_{\mu\in \mathcal S_F^*}I_{\beta}(\mu) = -\inf_{x\in F} \gamma(x).
	\end{align}
	
%	Consider $O$ be an open subset of $[0,1]$. Then
%	\begin{align*}
%			&\liminf_{n \to \infty} \frac{1}{n^2}\log \P_{n,\beta}^{\mathbb F}\{M_n \in O\} \\& =\liminf_{n \to \infty} \frac{1}{n^2}\log  \P_{n,\beta}^{\mathbb F}\{L_n \in \mathcal S_{O}\}\nonumber+\liminf_{n \to \infty} \frac{1}{n^2}\log  \P_{n,\beta}^{\mathbb F}\{L_n \in M_1^*(\mathbb{C})\cap\{\mu\in M_1(\mathbb{C})|\mu(U)\in F\}\}\nonumber
%		\\&\geq \liminf_{n \to \infty} \frac{1}{n^2}\log  \P_{n,\beta}^{\mathbb F}\{L_n \in \mathcal S_{O}\}\nonumber \\&\geq -\inf_{\mu\in \mathcal S_O}I_{\beta}(\mu) = -\inf_{x\in O} \gamma(x).
%	\end{align*} 
\noindent \underline{\it Case-II.} (For $\mathbb{F}=\mathbb{C}$)	 Suppose  $F\subseteq [0,1]$ is a closed set  which satisfies \eqref{eqn:condiononF}. Then \eqref{eqn:sumprobability} implies that,  for $0<\alpha<2$ and for large n,
\[
\P^{\mathbb C}_{n,\beta} (M_n \in F)\le 2e^{n^\alpha}\P^{\mathbb C}_{n,\beta} (L_n \in S_F^*).
\]
Then \Cref{proposition},  and \eqref{eqn:relation}  imply that 
	\begin{align*}
		\limsup_{n \to \infty} \frac{1}{n^2} \log \P^{\mathbb F}_{n,\beta} (M_n \in F) &\leq \limsup_{n \to \infty} \frac{1}{n^2} \log \P^{\mathbb C}_{n,\beta} (L_n \in S_F^*) \\& \leq -\inf_{\mu\in \mathcal S_F^*}I_{\beta}(\mu) = -\inf_{x\in F} \gamma(x). 
	\end{align*}

	Thus, the LDP type bounds hold for the laws of $\{M_n\}$ with speed $n^2$ and the rate function $\gamma$ defined on  $[0,1]$. Furthermore, \Cref{lem:goodrategamma} shows that the rate function $\gamma$ is indeed a good rate function on $[0,1]$. Therefore, we conclude the result.
\end{proof}

\subsection{Proof of \Cref{lem:openclosed}}
Before proving the lemma we recall the well known Portmanteau theorem, which will be used in the proof of \Cref{lem:openclosed}.

\begin{result}[Portmanteau Theorem]\label{re:portmeantau}
	Let  $(\mathbb{F},\mathcal{B}(\mathbb{F}))$
	be equipped with its Borel $\sigma$-algebra induced by the usual topology.
	Let $\{\mu_n\}_{n\ge 1}$ and $\mu$ be probability measures on $\mathbb{F}$.
	Then the following statements are equivalent:
	
	\begin{enumerate}
		\item $\mu_n \Rightarrow \mu$ (weak convergence), i.e.
		\[
		\int f \, d\mu_n \;\longrightarrow\; \int f \, d\mu
		\quad \text{for all } f \in C_b(\mathbb{F}).
		\]
		
		\item For every closed set $F \subset \mathbb{F}$,
		\[
		\limsup_{n\to\infty} \mu_n(F) \le \mu(F).
		\]
		
		\item For every open set $G \subset \mathbb{F}$,
		\[
		\liminf_{n\to\infty} \mu_n(G) \ge \mu(G).
		\]
		
		\item For every Borel set $A \subset \mathbb{F}$ such that
		$\mu(\partial A)=0$,
		\[
		\mu_n(A) \longrightarrow \mu(A).
		\]
	\end{enumerate}
\end{result}
We refer to \cite{billingsley2013convergence} for a proof of \Cref{re:portmeantau}. Now we proceed to prove \Cref{lem:openclosed}.
\begin{proof}[Proof of \Cref{lem:openclosed}]
%	Let $U\subset \mathbb F$ that satisfies \Cref{ass:U}. Then, for $\mathbb F=\mathbb R$, we have the boundary $\partial U$ of $U$ is polar. Since the Lebesgue measure of a polar set is zero, hence we get  $m(\partial U)=0$. 
	Let $M_1^*(\mathbb F)$ be as in \eqref{eqn:defm1F}.
	Define a  map 
	$f:M_1^*(\mathbb F)\to [0,1]$  as 
	\begin{align*}
		f(\mu)= \mu(U).
	\end{align*}
	Observe that if $\mu \in M_1^*(\mathbb F)$ then $\mu (\partial U)=0$.
	The Portmanteau theorem implies that the function $f$ is continuous on $M_1^*(\mathbb F)$ equipped with the weak topology. Then $\mathcal O= f^{-1}(O)$ and $\mathcal F= f^{-1}(F)$ which are open and closed respectively as $f$ is a continuous function. 
	\end{proof}

\subsection{Proof of \Cref{lem:goodrategamma}}
The following lemma will be used in the proof of \Cref{lem:goodrategamma}.
\begin{lemma}\label{lem:goodrateI} 
	Let $I_\beta $ be as defined  in \eqref{eqn:defIbeta}. Then $I_\beta$ is  a good rate function in $M_1^*(\mathbb F)$ equipped with the weak topology.
\end{lemma}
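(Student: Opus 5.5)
The plan is to deduce the statement from the known fact that $I_\beta$ is a good rate function on all of $M_1(\mathbb F)$ with the weak topology. For $\mathbb F=\R$ this is due to Ben Arous and Guionnet \cite{arous1997large} (see also \cite{anderson2010introduction}); for $\mathbb F=\C$ it is due to Chafa\"i, Gozlan and Zitt \cite{MR3262506}. The general principle is that a good rate function stays good when restricted to a closed subspace. So it suffices to show two things: $I_\beta$ is good on $M_1(\mathbb F)$, and $M_1^*(\mathbb F)$ is closed in $M_1(\mathbb F)$, at least in the case where closedness is needed.

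First I would recall why $I_\beta$ is good on $M_1(\mathbb F)$. Write
\[
\int V\,d\mu-\tfrac{\beta}{2}\Sigma(\mu)=\iint k(x,y)\,d\mu(x)d\mu(y),\qquad k(x,y)=\tfrac12 V(x)+\tfrac12 V(y)-\tfrac{\beta}{2}\log|x-y|.
\]
By Assumption \ref{ass:V}, together with $\log|x-y|\le \log(1+|x|)+\log(1+|y|)$, the kernel $k$ is bounded below and tends to $+\infty$ as $|x|+|y|\to\infty$. Lower semicontinuity then follows by writing $\iint k\,d\mu\,d\mu=\sup_M\iint \min(k,M)\,d\mu\,d\mu$, a supremum of weakly continuous functionals. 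For compactness of the level sets, a bound $I_\beta(\mu)\le\alpha$ forces $\mu(\{|x|>R\})\le C/\inf_{|x|>R}\bigl(V(x)-\beta'\log|x|\bigr)\to0$. Hence the level sets are tight, and by Prokhorov they are relatively compact; being closed, they are compact.

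Next, take a level set $\Psi^*(\alpha)=\{\mu\in M_1^*(\mathbb F): I_\beta(\mu)\le\alpha\}=\Psi(\alpha)\cap M_1^*(\mathbb F)$. Its closedness in the relative topology is immediate. For compactness, I would show that it is closed in $M_1(\mathbb F)$, so that it is a closed subset of the compact set $\Psi(\alpha)$.

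For $\mathbb F=\C$, suppose $\mu_k\to\mu$ weakly with $\mu_k((\partial U)_\epsilon)\le c\epsilon$ for $0<\epsilon\le\delta_0$. Since $(\partial U)_\epsilon$ is open, Portmanteau (Result \ref{re:portmeantau}(3)) gives $\mu((\partial U)_\epsilon)\le\liminf_k\mu_k((\partial U)_\epsilon)\le c\epsilon$. So $M_1^*(\C)$ is closed, and the case is done.

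For $\mathbb F=\R$, the set $\{\mu:\mu(\partial U)=0\}$ is not weakly closed in general. Here I would instead show that every $\mu$ with $I_\beta(\mu)<\infty$ automatically satisfies $\mu(\partial U)=0$, so that $\Psi^*(\alpha)=\Psi(\alpha)$. Indeed, finiteness of $I_\beta(\mu)$ gives $\Sigma(\mu)>-\infty$, so $\mu$ has finite logarithmic energy. A measure of finite logarithmic energy charges no polar set: if $\mu(E)>0$ for a polar compact $E$, then the normalized restriction of $\mu$ to $E$ would have finite energy, contradicting that $E$ has capacity zero. Since $\partial U$ is closed and polar by Assumption \ref{ass:U}, we can pass to compact pieces $\partial U\cap[-R,R]$. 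It follows that $\mu(\partial U)=0$.

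The main subtlety is this real case. One must keep the restricted-energy argument valid for a possibly unbounded polar $\partial U$, which is handled by intersecting with large compact intervals and using countable additivity. One must also check that the restricted measure has finite energy; on a bounded set $\log|x-y|$ is bounded above, so the energy of the restriction is controlled by that of $\mu$ itself.
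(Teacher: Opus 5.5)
Your proposal is correct and follows essentially the same route as the paper. Goodness of $I_\beta$ on $M_1(\mathbb F)$ comes from the known LDP results; closedness of $M_1^*(\mathbb C)$ comes from Portmanteau applied to the open sets $(\partial U)_\epsilon$; and for $\mathbb F=\mathbb R$ the inclusion $\mathcal D_I\subset M_1^*(\mathbb R)$ holds because finite-energy measures do not charge the polar set $\partial U$. Your real case is more careful than the paper's: you restrict to the compact polar pieces $\partial U\cap[-R,R]$, check that the normalized restriction has finite energy, and avoid the unnecessary appeal to the Dembo--Zeitouni restriction lemma.
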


We use the following results to prove \Cref{lem:goodrateI}. 
%\begin{fact}\cite[Lemma 2.4]{serfaty2024lectures}\label{fact:1}
%Let $I_\beta(\mu)$ be as defined in \eqref{eqn:defIbeta}. Suppose $V$ satisfies \Cref{ass:V} then $I_\beta(\mu)$ is a good rate function in $M_1(\mathbb F)$ with weak topology.
%\end{fact}
\begin{result}\cite[Lemma 4.1.5]{dembo2009large}\label{res:dembo}
	Let $\mathcal{Y}$ be a measurable subset of $\mathcal{X}$ such that $\mu_n(\mathcal{Y}) = 1$ for each $n\in \N$. Suppose that $\mathcal{Y}$ is equipped with the topology induced by $\mathcal{X}$. If $\{\mu_n\}$ satisfies the LDP in $\mathcal{X}$ with the rate function I and $\mathcal{D}_I\subset \mathcal{Y}$ then the same LDP holds in $\mathcal{Y}$. Moreover, under the same assumption if I is a good rate function in $\mathcal{X}$, then it is also a good rate function in $\mathcal{Y}$.
\end{result}
\begin{result}(\cite[Theorem 2.6.1]{anderson2010introduction} and \cite[Theorem 1.1]{MR3262506})\label{re:ldp}
	Let $(\lambda_1,\ldots,\lambda_n)$  be distributed according to the law $\P_{n,\beta}^{\mathbb F}$, and $L_n$ be as defined in \eqref{eqn:defLn}.  Then the laws of random measures $\{L_n\}$ satisfies  the LDP in $M_1(\mathbb{F})$ equipped with the weak topology  with  speed $n^2$ and  good rate function $I_\beta$ as defined in \eqref{eqn:defIbeta}.
	That is,
	\begin{enumerate}
		\item for any open set $\mathcal O \subseteq M_1(\mathbb{F})$,
		\[
		\liminf_{n \to \infty} \frac{1}{n^2} \log \P_{n,\beta}^{\mathbb F}(L_n \in \mathcal O) \geq -\inf_{\mu\in \mathcal O} I_\beta(\mu),
		\]
		\item for any closed set $\mathcal F \subseteq M_1(\mathbb{F})$,
		\[
		\limsup_{n \to \infty} \frac{1}{n^2} \log \P_{n,\beta}^{\mathbb F} (L_n \in \mathcal F) \leq -\inf_{\mu\in \mathcal F} I_\beta(\mu).
		\]
	\end{enumerate}
\end{result}
\begin{proof}[Proof of \Cref{lem:goodrateI}]
	\underline{\it Case 1: For $\mathbb{F}=\mathbb{R}$.} From \Cref{re:ldp} we know that $I_{\beta}$ is a good rate function in $M_1(\mathbb R)$ equipped with weak topology.  In other words, for  $\eta\geq 0$, the level set $K_\eta:=\{\mu\in M_1(\R)\suchthat I_{\beta}(\mu)\leq \eta\}$ is compact in $M_1(\mathbb{R})$.  Then it is enough to show that 
	\begin{align}\label{eqn:D_I}
	\mathcal D_I=\{\mu\in M_1(\R)\suchthat I_{\beta}(\mu)<\infty\}\subset M_1^*(\mathbb{R}). 
	\end{align}
	From \eqref{eqn:D_I} we have $K_\eta\subset M_1^*(\mathbb{R})$ for each $\eta\ge 0$. Now from \eqref{res:dembo}, using \eqref{eqn:D_I} and \eqref{lem:fullmeasurefor R} we'll have that $I_{\beta}$ ia a good rate function on $M_1^*(\mathbb{R})$. Thus it remains to show \eqref{eqn:D_I}. 
	
	\vspace{.1cm}
	 \noindent  \underline{\it Proof of \eqref{eqn:D_I}.} 
	On the contrary, suppose the above inequality is not true, which means that there exists a measure $\mu$ such that $I_{\beta}(\mu)<\infty$, but $\mu(\partial U) \neq 
	0$. Then consider the following measure $\Tilde{\mu}$ in the following manner 
	\[
	\Tilde{\mu}(A) = 
	\begin{cases}
		\mu(A), & \text{if } A\subset \partial U, \\
		0, & \text{otherwise }  \\
	\end{cases}
	\]
	Then $\Tilde{\mu}$ is a nonzero measure on $\partial U$ and $\partial U$ is a polar set, which implies from the definition of polar set that $I_{\beta}^V(\Tilde{\mu})=\infty$, which is a contradict the fact that $I_{\beta}^V(\mu)<\infty$. Hence, our assumption was wrong, and the containment is true.  Hence the result.
	
	\vspace{.2cm}
	\noindent \underline{\it Case 2: For $\mathbb F=\C$.} We first show that $M_1^*(\mathbb C)$ is a closed subset of $M_1(\mathbb C)$. Note 
	$$
	M_1^*(\mathbb C)=\bigcap_{0<\epsilon<\delta_0}M_1^{\epsilon}(\mathbb C) \mbox{ where } M_1^\epsilon(\mathbb C)=\{\mu\in M_1(\mathbb C)\suchthat \mu((\partial U)_\epsilon)\le c\epsilon\}.
	$$
	By Portmanteau theorem it follows that $M_1^\epsilon(\mathbb C)$ is a closed set. Indeed, if $\mu_n\in M_1^{\epsilon}(\mathbb C)$ and $\mu_n$ converges to $\mu$ weakly as $n\to \infty$ then 
	\[
	\mu((\partial U)_\epsilon)\le \liminf_{n \to \infty}(\mu(\partial U)_\epsilon)\le c\epsilon,
	\]
	 as $(\partial U)_\epsilon$ is open. Hence $\mu \in M_1^{\epsilon}(\mathbb C)$. Therefore $M_1^*(\mathbb C)$ is a closed set, as arbitrary intersection closed sets is closed. For $\eta>0$, define 
	 \[
	 K^*_\eta=\{\mu \in M_1^*(\mathbb C)\suchthat I_\beta(\mu)\le \eta\}=K_\eta\cap M_1^*(\mathbb C),
	 \]
	 where $K_\eta=\{\mu\in M_1(\mathbb C) \suchthat I_\beta(\mu)\le \eta\}$. Note that the rate function $I_\beta(\cdot)$ is lower semi-continuous and $K_\eta$ is compact in $M_1(\mathbb C)$, as $I_\beta$ is a good rate function in $M_1(\mathbb C)$ by \Cref{re:ldp}. Which implies that the rate function  $I_\beta(\mu)$ is lower continuous on $M_1^*(\mathbb C)$ and $K_\eta^*$ is compact for all $\eta>0$, as intersection of closed and compact set is compact. Hence the result.
	  %Let $\mu_n$ be a sequence of measures in $K_\eta^*$ such that $\mu_n$ converges to $\mu$ weakly as $n\to \infty$.
	\end{proof}

%	\begin{result}\cite[Lemma 2.4]{serfaty2024lectures}
%		Let $\mathcal E:\mathcal P(\mathbb R^d)\to\mathbb R\cup\{+\infty\}$ be the functional
%		\[
%		\mathcal E(\mu)
%		=
%		\frac12
%		\iint_{\mathbb R^d\times\mathbb R^d}
%		g(x-y)\,d\mu(x)d\mu(y)
%		+
%		\int_{\mathbb R^d}V(x)\,d\mu(x),
%		\]
%		defined on the space $\mathcal P(\mathbb R^d)$ of probability measures on $\mathbb R^d$.
%		Assume that
%		
%		\begin{itemize}
%			\item[(A1)] $V$ is lower semicontinuous (l.s.c.) and bounded below,
%			\item[(A2)] (growth assumption)
%			\[
%			\lim_{|x|\to\infty}(V(x)+g(x))=+\infty .
%			\]
%		\end{itemize}
%		
%		Let $\{\mu_n\}_n$ be a sequence in $\mathcal P(\mathbb R^d)$ such that $\{\mathcal E(\mu_n)\}_n$ is bounded. Then, up to extraction of a subsequence, $\mu_n$ converges to some $\mu\in\mathcal P(\mathbb R^d)$ in the weak sense of probabilities, and
%		\begin{equation}
%			\liminf_{n\to\infty}\mathcal E(\mu_n)\ge \mathcal E(\mu).
%		\end{equation}
%		
%		Moreover $\inf \mathcal E > -\infty$. In other words, $\mathcal E$ is lower semicontinuous, bounded below, and its sub-level sets are compact.
%	\end{result}
%	\noindent If we substitute $g(x,y)=-\beta\log|x-y|$ we'll get that $I_\beta$ will be a good rate function on $M_1(\mathbb{F})$ 

\begin{proof}[Proof of \Cref{lem:goodrategamma}]
By \Cref{lem:goodrateI}, we have $I_\beta:{M}_1^*(\mathbb{F}) \to[0,\infty]$ is a good rate function. Thus  $I_\beta$ is lower semicontinuous and $K_\eta^*$ compact for all $\eta>0$.
%	\[
%	K_c := \{\mu\in\mathcal{M}_1^*(\mathbb{F}) : I_\beta(\mu)\le c\}, \qquad c\ge0.
%	\]
	For a measurable set $U\subseteq \mathbb{F}$ that satisfies \Cref{ass:U}, we define 
		\[
	f:{M}_1^*(\mathbb{F})\to[0,1], \qquad f(\mu)=\mu(U).
	\]
	Observe that by the Portmantau theorem it follows that the map $f$ 
	is continuous as $\mu(\partial U)=0$ on $M_1^*(\mathbb F)$.  Recall from \eqref{eqn:defgamma}, for $x\in [0,1]$, we have
	\[
	\gamma(x) := \inf\{ I_\beta(\mu) : f(\mu)=x,\; \mu\in {M}_1^*(\mathbb F)\},
	\]
	with the convention $\inf\emptyset=+\infty$. We show that $\gamma$ is lower semicontinuous and has compact sublevel sets.

	\noindent {\it Lower semicontinuity.}
	Let $x_n\to x$ in $[0,1]$. 
	We need to  show that  
	$$
	\liminf_{n\to\infty}\gamma(x_n)\ge \gamma(x).
	$$
	Note that if $\liminf_{n}\gamma(x_n)=+\infty$, there is nothing to prove.  Thus we assume that $\alpha:=\liminf_{n}\gamma(x_n)<\infty$.  Then  there exists a subsequence $\{n_k\}$ such that $\gamma(x_{n_k})$ converges to $\alpha$.
	By the definition of $\gamma$ it follows that for each $n$ there exists $\mu_n\in {M}^*_1(\mathbb F)$ such that
	\[
	f(\mu_n)=x_n, \qquad I_\beta(\mu_n)\le \gamma(x_n)+\tfrac1n.
	\]
	In particular we have  $\sup_{n_k} I_\beta(\mu_{n_k})<\infty$. Which implies that if  $\eta>\alpha$ then $I_\beta(\mu_{n_k})\le \eta$ for all large $n_k$. Therefore $\mu_{n_k}\in K_\eta^* $ for all sufficiently large $n_k$. As  $K_\eta^*$ is compact, there exists a subsequence $\{n_{k_j}\}$ and $\mu\in K_\eta^*$ such that 
	\[
	\mu_{n_{k_j}} \to \mu, \mbox{ weakly as $j\to \infty$}.
	\]
		By the lower semicontinuity of $I_\beta$, we have
		\begin{align}\label{eqn:liminf}
		I_\beta(\mu)\le \liminf_{n\to\infty} I(\mu_{n_{k_j}}) = \liminf_{n\to\infty} \gamma(x_{n_{k_j}})=\alpha.
		\end{align}
	On the other hand, by the continuity of $f$ on ${M}_1^*(\mathbb F)$, we have 
	\[
	f(\mu)=\lim_{n\to\infty} f(\mu_n)=\lim_{n\to\infty} x_n = x.
	\]
	Thus $\mu\in {M}_1^*(\mathbb F)$ and $f(\mu)=x$,  consequently, by \eqref{eqn:liminf} we get
	\[
	\gamma(x) \le I(\mu) \le \alpha=\liminf_{n\to\infty}\gamma(x_n).
	\]
	Therefore, $\gamma$ is lower semicontinuous.
	
	\vspace{.2cm}
	\noindent{\it Compactness of sublevel sets.}
	For each $\eta\ge0$,  
	we show that $A=B$, where
	\[
	A:=\{x:\gamma(x)\le \eta\} 
	\mbox{ and } B:=f\big( K_\eta\big).
	\]
	Suppose $x\in B$. Then there exists $\mu_0\in K_\eta$ such that $f(\mu_0)=x$. Again, $\mu_0\in K_\eta$ implies that $I_\beta(\mu_0)\le \eta$. Thus $\gamma(x)\le \eta$, that is, $x\in A$. Consequently, we have 
	\begin{align}\label{eqn:asubsetb}
	B\subseteq A.
	\end{align}
	On the other hand, suppose $x\in A$. Then we have following two cases.

\vspace{.1cm}
	\noindent{\it Case-I}: Note that if $\gamma(x)<\eta$ then there exists $\mu'\in  M_1^*(\mathbb F) $ such that $f(\mu')=x$ and $I_\beta(\mu')\le \eta$. Then $\mu'\in K_\eta$ and $x\in B$. Thus $A\subseteq B$. 
	
	\vspace{.1cm}
	\noindent{\it Case-II}:
	Suppose, $\gamma(x)=\eta$. Then there exists a sequence  $\{\mu_n\}$ in $M_1^*(\mathbb F)$ such that 
	\[
	\mbox{$\lim_{n\to \infty}I_\beta(\mu_n)= \eta$, and $f(\mu_n)=x$ for all $n$.}
	\]
	 Which implies that $I_\beta(\mu_n)\le \eta+1$ for all large $n$. In other words, $\mu_n\in K_{\eta+1}$ for large $n$. Then, as $K_{\eta+1}$ is compact, there exists a subsequence $\{\mu_{n_k}\}$ and a measure $\mu\in M_1^*(\mathbb F)$ such that $\mu_{n_k}$ converges to $\mu$ weakly as $k\to \infty$. Since $I_\beta$ is lower semi-continuous, we have 
	\begin{align*}
	I_\beta(\mu)\le \liminf_{k\to \infty}I_\beta(\mu_{n_k})=\eta.
	\end{align*}
	Which implies that $\mu\in K_\eta$. By continuity of $f$, we get 
	\[
	f(\mu)=\lim_{n\to \infty} f(\mu_n)=x.
	\]
Which implies that $x\in B$. Thus $A\subseteq B$. Hence by \eqref{eqn:asubsetb} we  get $A=B$, that is,
\[
\{x\suchthat \gamma(x)\le \eta\}=f(K_\eta).
\]
Since the set $K_\eta$ is compact, and $f$ is continuous on it; hence $f(K_\eta)$ is compact in $[0,1]$.
	Therefore each sublevel set $\{\gamma\le \eta\}$ is compact. Thus
	the function $\gamma:[0,1]\to[0,\infty]$ is lower semicontinuous and has compact level sets. In other words, the rate function $\gamma$ is a good rate function.
\end{proof}
\subsection{Proof of \Cref{lem:fullmeasurefor R}} The result follows from the fact that $\P^{\mathbb R}_{n,\beta}$ is absolutely continuous with respect to the Lebesgue measure on $\R^n$. 
\begin{proof}[Proof of \Cref{lem:fullmeasurefor R}]
	It  is enough to check $\P^{\mathbb R}_{n,\beta}(L_n(\partial U)>0)=0$. Since $\partial U$ is polar and the Lebesgue measure of a polar set is zero, therefore $m(\partial U)=0$. Then 
	\begin{align*}
		\P^{\mathbb R}_{n,\beta}(L_n(\partial U)>0) & = \P^{\mathbb R}_{n,\beta}(\frac{1}{n} \sum_{k=1}^{n} \delta_{\lambda_k}(\partial U)>0) \\&= \P^{\mathbb R}_{n,\beta}(\omega| \exists k\in \{1,2,...,n\} ,\lambda_k(\omega)\in \partial U) \\& \leq \sum_{k=1}^{n}\P^{\mathbb R}_{n,\beta}(\lambda_k\in \partial U) = 0.
	\end{align*}
	The summation in the last inequality is $0$ because each of the terms is zero, as $m(\partial U)=0$ and the measure $\P^{\mathbb R}_{n,\beta}$ is absolutely continuous with respect to the Lebesgue measure.
	Hence the result.
\end{proof}

\section{Proof of  \Cref{proposition} for $\mathbb F=\mathbb R$} \label{sec:proofofprop}
In this section we present a proof of  \Cref{proposition} for $\mathbb F=\mathbb R$.  %The following results will be used in the proof of  the proposition.

%\begin{lemma}\label{lem2}
%	Let $M_1^*(R)$ denotes the space of all probability measures in $\mathbb{R}$ which takes value 0 at $\partial U$, that is,
%	$$
%	M_1^*(\mathbb{R}) = \{ \mu \in M_1(\mathbb{R})| \mu(\partial U)=0\}.
%	$$ 
%	Then the sequence of empirical random measures $\{L_n\}_{n \in \mathbb{N}}$ satisfies the Large Deviation Principle in $M_1^*(\mathbb{R})$ with the rate function $I_{\beta}^V$ and with the rate V. 
%\end{lemma}

\begin{lemma}\label{lem:measurable}
	Let $U\subset \R$ which statisfies \Cref{ass:U}, and $M_1^*(\R)$ be as defined in \eqref{eqn:defm1F}.
	%Recall $M_1^*(\mathbb R)=\{\mu\in M_1(\mathbb R)\suchthat \mu(\partial U)=0)\}$. 
	Then $M_1^*(\mathbb{R})$
	is a Borel (hence measurable) subset of $M_1(\mathbb{R})$ equipped with weak topology.
\end{lemma}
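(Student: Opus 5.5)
We need to show $M_1^*(\mathbb R)=\{\mu\in M_1(\mathbb R)\suchthat \mu(\partial U)=0\}$ is a Borel subset of $M_1(\mathbb R)$ with the weak topology. Write $K:=\partial U$; it is closed, since the boundary of any set is closed. The plan is to express $M_1^*(\mathbb R)$ as the preimage of $\{0\}$ under the map $\Phi:M_1(\mathbb R)\to[0,1]$, $\Phi(\mu)=\mu(K)$, and then show that $\Phi$ is Borel measurable (in fact upper semicontinuous).

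The key step is upper semicontinuity of $\Phi$. Let $\mu_n\to\mu$ weakly. Because $K$ is closed, item (2) of the Portmanteau theorem (\Cref{re:portmeantau}) gives
\[
\limsup_{n\to\infty}\mu_n(K)\le\mu(K).
\]
So $\Phi$ is sequentially upper semicontinuous. Since the weak topology on $M_1(\mathbb R)$ is metrizable (for instance by the L\'evy--Prokhorov metric), sequential and topological upper semicontinuity coincide. Hence, for every $t\in\mathbb R$, the set $\{\mu\suchthat \mu(K)\ge t\}$ is closed in $M_1(\mathbb R)$.

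It follows that
\[
M_1^*(\mathbb R)^c=\{\mu\suchthat \mu(K)>0\}=\bigcup_{j\ge1}\{\mu\suchthat \mu(K)\ge 1/j\}
\]
is a countable union of closed sets, i.e. an $F_\sigma$ set. Therefore $M_1^*(\mathbb R)$ is a $G_\delta$ set, and in particular Borel.

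As a cross-check, the same conclusion follows by writing $\mu(K)=\inf_{k}\int f_k\,d\mu$ with $f_k(x)=\max(0,1-k\,\mathrm{dist}(x,K))$. Each $f_k\in C_b(\mathbb R)$ and $f_k\downarrow\mathbf 1_K$, so monotone convergence gives the identity. Each $\mu\mapsto\int f_k\,d\mu$ is continuous, so $\Phi$ is an infimum of countably many continuous maps and hence Borel. If $K=\emptyset$, then $M_1^*(\mathbb R)=M_1(\mathbb R)$ and there is nothing to prove.

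The only delicate point is justifying that sequential arguments suffice, which is exactly the metrizability of the weak topology on $M_1(\mathbb R)$. The second argument avoids this entirely, so I would present it as the main proof and keep the Portmanteau argument as the intuition. No use of polarity of $\partial U$ is needed here; only its closedness matters.
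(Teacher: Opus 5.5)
Your proof is correct and follows essentially the same route as the paper: both get Borel measurability from the Portmanteau semicontinuity of $\mu\mapsto\mu(F)$ for closed $F$. The paper writes $\mu(\partial U)=\mu(\overline{U})-\mu(U^\circ)$ as the difference of an upper and a lower semicontinuous map. You instead apply upper semicontinuity directly to the closed set $\partial U$, which gives the slightly sharper $G_\delta$ conclusion, and you make explicit the metrizability point (or bypass it via $f_k\downarrow\mathbf 1_{\partial U}$) that the paper leaves implicit.
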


\begin{proof}[Proof of \Cref{proposition} for $\mathbb F=\R$]
Note that \Cref{re:ldp} gives the LDP for $\{L_n\}$ in $M_1(\mathbb R)$, whereas we need to show it holds in $M_1^*(\mathbb R)$.
Thus we utilize Result \ref{res:dembo} to prove \Cref{proposition} for $\mathbb F=\R$. In this case $\mathcal X=M_1(\mathbb R)$ and $\mathcal Y=M_1^*(\mathbb R)$. \Cref{lem:measurable} implies that $M_1^*(\R)$ is measurable. Therefore it is enough to show that the two conditions in Result \ref{res:dembo} are satisfied. That is,
\begin{enumerate}
	\item[(i)] $\P^{\mathbb R}_{n,\beta}(L_n\in M_1^*(\mathbb R))=1$, \mbox{ for each $n\in \N$}.
	\item [(ii)] $\mathcal D_I=\{\mu\in M_1(\R)\suchthat I_{\beta}^V(\mu)<\infty\}\subset M_1^*(\mathbb{R})$.
\end{enumerate}
Note $(i)$ is same as \Cref{lem:fullmeasurefor R} and $(ii)$ is same is same as \eqref{eqn:D_I}. Hence the result.
\end{proof}

\begin{proof}[Proof of \Cref{lem:measurable}]
By the Portmanteau theorem, if \(\mu_n \to \mu\) weakly then, for every closed set \(F \subset \mathbb{R}\) and every open set \(O \subset \mathbb{R}\),
\[
\limsup_{n \to \infty} \mu_n(F) \le \mu(F)
\;\; \mbox{ and }\;\;
\liminf_{n \to \infty} \mu_n(G) \ge \mu(G).
\]
Therefore, the map 
\(
\mu \mapsto \mu(F)
\)
is \emph{upper semicontinuous} for every closed set \(F\), and 
\(
\mu \mapsto \mu(O)
\)
is \emph{lower semicontinuous} for every open set \(O\). In particular, both maps are Borel measurable.
Fix an open set \(U \subset \mathbb{R}\). As
\(
\partial U = \overline{U} \setminus U^\circ,
\)
therefore for every probability measure \(\mu\) we have 
\[
\mu(\partial U) = \mu(\overline{U}) - \mu(U^\circ).
\]
For each integer \(k \ge 1\), define
\[
A_{U,k} := \left\{ \mu \in M_1(\mathbb{R}) : \mu(\overline{U}) - \mu(U^\circ) < \frac{1}{k} \right\}.
\]
Because \(\mu \mapsto \mu(\overline{U})\) is upper semicontinuous and 
\(\mu \mapsto \mu(U^\circ)\) is lower semicontinuous, their difference 
\(\mu \mapsto \mu(\overline{U}) - \mu(U^\circ)\) is Borel measurable.
Hence each \(A_{U,k}\) is a Borel subset of \(M_1(\mathbb{R})\).
Consequently,
\[
M_1^*(\R)=\{\mu\in M_1(\R) : \mu(\partial U)=0\} = \bigcap_{k \ge 1} A_{U,k}
\]
is Borel. Hence the result.
\end{proof}

\section{Proof of  \Cref{proposition} for $\mathbb F=\mathbb C$}\label{sec:proof}
 This section focuses on proving \Cref{proposition} for $\mathbb F=\mathbb C$. It is noteworthy to mention that the approach used to prove the proposition for  $\mathbb F=\mathbb R$ can not be used when $\mathbb F=\mathbb C$, as
 \(
\P_{n,\beta}^{\C}(L_n\in M_1^*(\C))\neq 1. 
 \)
 In fact, it can be shown that $\P_{n,\beta}^{\C}(L_n\in M_1^*(\C))\to 1$ as $n\to \infty$. Therefore \Cref{res:dembo} does not hold for $\mathbb F=\mathbb C$. Hence we provide a direct proof, inspired by the proof techniques from \cite{ben1998large} and \cite{MR3262506}. However, the construction of points $\{z_1^*,\ldots,z_n^*\}$ as in \Cref{lem:construction of points} and consequent calculations in this paper are  different from those  in \cite{ben1998large} and \cite{MR3262506}. First we prove the proposition using the following lemmas and facts. The proofs of the lemmas are given at the end of this section.

\begin{lemma}\label{lem:construction of points}
	Let $\nu$ be a probability masure on a square $D$ in the complex plane with density function $g_\nu$. Suppose there exists a constant $C$ such that $1/C\le g_\nu\le C$ on $D$. For each $n\in \N$, then there exists  $z_1^*,\ldots,z_n^*$ in $D$ such that
	\begin{enumerate}
		\item $\frac{1}{n}\sum_{k=1}^n\delta_{z_k^*}$ converges weakly to $\nu$ as $n\to \infty$.
		
		\vspace{.1cm}
		\item  $\min\{|z_i^*-z_j^*|\suchthat 1\le i,j\le n\}\ge \frac{1}{2\sqrt{Cn}}$ for large $n$.
	\end{enumerate}
\end{lemma}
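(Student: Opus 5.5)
We will construct the points by a quantile-type partition of the square $D$ adapted to $\nu$. Write $D=[a,a+L]\times[b,b+L]$ and set $m_n=\lceil\sqrt n\,\rceil$. First cut $D$ by vertical lines $a=x_0<x_1<\dots<x_{m_n}=a+L$ chosen so that each vertical strip $S_i=[x_{i-1},x_i]\times[b,b+L]$ carries $\nu$-mass equal to the share of $n$ points assigned to it. Concretely, strip $S_i$ receives $n_i\in\{\lfloor n/m_n\rfloor,\lceil n/m_n\rceil\}$ points with $\sum_i n_i=n$, and $\nu(S_i)=n_i/n$. Then cut each strip horizontally at heights $b=y^{(i)}_0<\dots<y^{(i)}_{n_i}=b+L$ so that every cell $R_{ij}=[x_{i-1},x_i]\times[y^{(i)}_{j-1},y^{(i)}_j]$ has $\nu(R_{ij})=1/n$. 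Both steps are possible by continuity of $t\mapsto\nu([a,t]\times[b,b+L])$ and of the analogous map inside a strip, since $\nu$ has a density. We place $z^*_{ij}$ at the centre of $R_{ij}$, which gives $n$ points in total.

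For property (1), the empirical measure $\frac1n\sum\delta_{z^*_{ij}}$ and $\nu$ give the same mass $1/n$ to every cell. So for a bounded Lipschitz $f$ the difference of integrals is at most $\max_{ij}\operatorname{diam}(R_{ij})\cdot\mathrm{Lip}(f)$, and it suffices to show the cells shrink uniformly. The width bound uses $g_\nu\le C$ and $g_\nu\ge1/C$: since $\nu(S_i)=n_i/n\approx 1/\sqrt n$ and $\nu(S_i)\ge (x_i-x_{i-1})L/C$, we get $x_i-x_{i-1}\le Cn_i/(nL)=O(n^{-1/2})$. For the height, $1/n=\nu(R_{ij})\ge (x_i-x_{i-1})(y_j-y_{j-1})/C$, combined with $x_i-x_{i-1}\ge n_i/(nCL)$, gives $y_j-y_{j-1}\le C^2L/n_i=O(n^{-1/2})$. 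Hence the diameters are $O(n^{-1/2})$, and weak convergence follows because bounded Lipschitz functions determine weak convergence (Portmanteau-type argument).

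For property (2), we need lower bounds on the cell sides, and these come from the upper density bound. Since $1/n=\nu(R_{ij})\le C\,w\,h$, where $w,h$ are the width and height, we get $wh\ge 1/(Cn)$. This product bound alone is not enough, because we need each side separately of order at least $1/\sqrt{Cn}$. The width is handled by $n_i/n=\nu(S_i)\le C w L$, so $w\ge n_i/(CLn)$. The height is handled by $h\ge 1/(Cnw)$ combined with the upper bound on $w$.

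The main obstacle is the constant. Points in distinct cells are at distance at least $\tfrac12\min(w,h)$ of the neighbouring cells, and these bounds involve $L$ and $C^2$. So the clean constant $\frac1{2\sqrt{Cn}}$ will not come out in general without normalization. I would first reduce to the case $L=1$, where $D$ has unit area, which is implicit since $\nu$ is a probability measure with $1/C\le g_\nu\le C$. I would then try to balance the strips so that $w\approx h$: choose the number of strips $m$ and the row count per strip adaptively, using the local density, so that each cell is close to a square of area $\nu$-mass $1/n$. Then $w,h\ge 1/\sqrt{Cn}$, and the half-side separation gives $\frac1{2\sqrt{Cn}}$. If exact balancing fails, the fallback is to prove the separation with a constant $c(C)/\sqrt n$, which is all that is needed later.
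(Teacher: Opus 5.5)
Your construction is essentially the paper's: equal-mass vertical strips, then equal-mass cells, with points at the centres. Your proof of (1) via Lipschitz test functions and $O(n^{-1/2})$ cell diameters is correct. Your treatment of non-square $n$, with $m_n=\lceil\sqrt n\,\rceil$ strips carrying $n_i\in\{\lfloor n/m_n\rfloor,\lceil n/m_n\rceil\}$ rows, is cleaner than the paper's Case~II.

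However, your side bounds $w\ge n_i/(CLn)$ and $h\ge L/(C^2n_i)$ only give (2) in the form $c\,n^{-1/2}$ with $c\approx\min\{1/(CL),\,L/C^2\}$. The balancing step meant to recover $\frac{1}{2\sqrt{Cn}}$ is a genuine gap, for two reasons:
\begin{enumerate}
\item $L=1$ is not implicit. The density bounds only force $C^{-1/2}\le L\le C^{1/2}$, and rescaling $D$ changes the bounds on $g_\nu$.
\item With strips of prescribed \emph{mass}, the constant is actually false. Take $g_\nu=C$ on a thin band $[a,a+w]\times[b,b+L]$ and $g_\nu=1/C$ elsewhere, with $2/\sqrt C<L<\sqrt C$ (possible for $C>2$; $w$ is then fixed by $\nu(D)=1$). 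For $n=m^2$, the strips inside the band have width $1/(CL\sqrt n)$ and all their cells have height $L/m$. So horizontally adjacent centres are at distance $1/(CL\sqrt n)<\frac{1}{2\sqrt{Cn}}$.
\end{enumerate}
The paper's proof has the same defect. It derives $a_i\ge 1/(CL\sqrt n)$ and $b_i\ge L/(C^2\sqrt n)$, and then simply asserts $\frac{1}{2\sqrt{Cn}}$, which this example refutes for that construction.

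What is missing is to prescribe the \emph{width} of the strips rather than their mass. Set $s=1/\sqrt{Cn}$, and let $x_i$ be the least $x\ge x_{i-1}+s$ with $n\,\nu([x_0,x]\times[b,b+L])\in\mathbb Z$. Since $g_\nu\ge 1/C$, this gives $s\le x_i-x_{i-1}\le s+C/(nL)$, except possibly for a narrower last strip. Then cut each strip $S_i$ into $n\nu(S_i)$ cells of mass $1/n$.
\begin{itemize}
\item Because $g_\nu\le C$, every cell has height at least $1/(\sqrt{Cn}+C^2/L)$. So centres in the same strip are at least that far apart, which exceeds $\frac{1}{2\sqrt{Cn}}$ for large $n$.
\item Centres in different strips are at horizontal distance at least $(w_i+w_{i'})/2>s/2=\frac{1}{2\sqrt{Cn}}$, since at most one of the two strips is narrower than $s$.
\item Tall cells do no harm, so no near-square balancing is needed.
\item Property (1) still holds: cells outside the last strip have diameter $O(n^{-1/2})$, and the last strip has $\nu$-mass $O(n^{-1/2})$.
\end{itemize}
You are right that separation $c(C)\,n^{-1/2}$ is all that \Cref{lem:energy} and \Cref{lem:openset} actually use. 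So your fallback already suffices for the paper's purposes, even though it does not prove the lemma as stated.
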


\begin{lemma}\label{lem:energy}
	Let $\nu$ and $z_1^*,\ldots,z_n^*$ be as defined in \Cref{lem:construction of points}. Then
	\[
	\iint \log |z-w|d{\nu}(z)d{\nu}(w)\leq \liminf_{n\to \infty}\l\{\frac{1}{n^2}\sum_{i\neq j=1}^n\log|z_{i}^*-z_{j}^*|\r\}.
	\]
\end{lemma}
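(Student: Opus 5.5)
The plan is to split the logarithm into a bounded continuous part and a singular part. The bounded part is handled by weak convergence of the empirical measures from \Cref{lem:construction of points}(1). The singular part is controlled uniformly in $n$ by the separation property \Cref{lem:construction of points}(2).

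\textbf{Step 1: truncation.} For $M>0$ put $\log_M(x):=\max(\log x,-M)$. Since $D$ is bounded, the map $(z,w)\mapsto \log_M|z-w|$ is bounded and continuous on $D\times D$, and $\log_M\ge\log$. Write $\nu_n:=\frac1n\sum_k\delta_{z_k^*}$. Then
\begin{align*}
\frac{1}{n^2}\sum_{i\neq j}\log|z_i^*-z_j^*|
&=\iint \log_M|z-w|\,d\nu_n(z)d\nu_n(w)-\frac{1}{n^2}\sum_{i=1}^n\log_M(0)\\
&\quad-\frac{1}{n^2}\sum_{i\ne j}\bigl(\log_M-\log\bigr)|z_i^*-z_j^*| .
\end{align*}
The diagonal term equals $-M/n$ and vanishes as $n\to\infty$. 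By \Cref{lem:construction of points}(1), $\nu_n\otimes\nu_n\to\nu\otimes\nu$ weakly, so the first term converges to $\iint\log_M|z-w|\,d\nu\,d\nu$. This limit is at least $\iint\log|z-w|\,d\nu\,d\nu$.

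\textbf{Step 2: the singular error, uniformly in $n$.} This is the main obstacle. We need to show
\[
E_{n,M}:=\frac{1}{n^2}\sum_{i\ne j,\ |z_i^*-z_j^*|<e^{-M}}\bigl(-M-\log|z_i^*-z_j^*|\bigr)\le \varepsilon(M),
\qquad \varepsilon(M)\to0 \text{ as } M\to\infty,
\]
with $\varepsilon(M)$ not depending on $n$ (for large $n$).

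\emph{Packing bound.} Let $s=\frac{1}{2\sqrt{Cn}}$ be the separation from \Cref{lem:construction of points}(2). The balls $B(z_j^*,s/2)$ are disjoint. Counting area then shows that for every $i$ and every $r\ge s$,
\[
\#\{j\suchthat |z_j^*-z_i^*|\le r\}\le \frac{(r+s/2)^2}{(s/2)^2}\le c\,n r^2,
\]
where $c$ depends only on $C$.

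\emph{Dyadic shells.} Split the pairs with $|z_i^*-z_j^*|<e^{-M}$ into shells $2^{-k-1}\le|z_i^*-z_j^*|<2^{-k}$ with $k\ge k_M\approx M/\log 2$. Only shells with $2^{-k}\ge s$ are nonempty, by separation. In the $k$-th shell the summand is at most $(k+1)\log 2$, and each $i$ has at most $c\,n4^{-k}$ partners there. Hence
\[
E_{n,M}\le \frac{1}{n^2}\sum_{i=1}^n\sum_{k\ge k_M} c\,n\,4^{-k}(k+1)\log 2= c'\sum_{k\ge k_M}(k+1)4^{-k}=:\varepsilon(M).
\]
This tail tends to $0$ as $M\to\infty$ and does not depend on $n$.

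\textbf{Step 3: conclusion.} Combining Steps 1 and 2 gives, for every $M$,
\[
\liminf_{n\to\infty}\frac{1}{n^2}\sum_{i\ne j}\log|z_i^*-z_j^*|\ \ge\ \iint\log_M|z-w|\,d\nu\,d\nu-\varepsilon(M)\ \ge\ \iint\log|z-w|\,d\nu\,d\nu-\varepsilon(M).
\]
Letting $M\to\infty$ yields the lemma. The energy $\iint\log|z-w|\,d\nu\,d\nu$ is finite because $\nu$ has a bounded density on a bounded square, though only the inequality is needed here.
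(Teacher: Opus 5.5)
Your proof is correct, but it takes a different route from the paper's.

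The paper neither truncates the kernel nor uses the weak convergence in part (1) of \Cref{lem:construction of points}. Instead it works block by block on $R_{i,n}\times R_{j,n}$, each of $\nu\otimes\nu$-mass $1/n^2$. There it bounds $\log|z-w|$ by $\log|z_i^*-z_j^*|$ plus $\log\bigl(1+\text{(cell diameters)}/|z_i^*-z_j^*|\bigr)$, so the continuous energy is dominated directly by the discrete one. For pairs at distance $>m^{-1/2}$ this correction vanishes. For the remaining pairs (the set $B^c$) it is at most $\log(1+4C)$ by the separation, and these pairs are asserted, without proof, to be $o(n^2)$ in number. In Case II, the $O(nm)$ pairs involving the extra points are absorbed by a crude $\log(2L\sqrt{C}m)$ bound. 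That route exploits the fine geometry of the construction: each $z_i^*$ sits in its own cell of mass exactly $1/n$ and diameter $O(n^{-1/2})$. In return it gives a one-sided comparison at each fixed $n$, with no second limit.

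You use \Cref{lem:construction of points} only as a black box: weak convergence plus a separation of order $n^{-1/2}$, where the precise constant $\frac{1}{2\sqrt{Cn}}$ is irrelevant. This buys you three things:
\begin{enumerate}
\item the argument applies to any configuration with these two properties;
\item the irregular leftover regions $R_i'$ of Case II need no special treatment;
\item the diagonal $i=j$ is handled cleanly.
\end{enumerate}
The paper's write-up glosses over the diagonal: its sums over all $(i,j)$ formally contain $\log|z_i^*-z_i^*|=-\infty$. Moreover, your packing count is exactly the estimate needed to justify the paper's unproved claim $|B^c|/n^2\to 0$.

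One cosmetic slip: the diagonal term $-\frac{1}{n^2}\sum_i\log_M(0)$ equals $+M/n$, not $-M/n$. It vanishes either way.
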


\begin{lemma}\label{lem:openset}
	Let $\nu$ be a measure as in \Cref{lem:construction of points}. Then, for any $\delta>0$,
	\[
	\liminf_{n\to \infty}\frac{1}{n^2} \log\P^{\mathbb C}_{n,\beta}(L_n\in B(\nu, \delta))\geq -I_\beta(\nu).
	\]
\end{lemma}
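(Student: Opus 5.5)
We prove \Cref{lem:openset} by the standard localisation argument of \cite{ben1998large,MR3262506}: place the $n$ points in small neighbourhoods of the deterministic configuration $z_1^*,\ldots,z_n^*$ given by \Cref{lem:construction of points}.

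\textbf{Choice of the neighbourhood.} Fix $\delta>0$ and let $r_n=\frac{1}{8\sqrt{Cn}}$, say. Consider the event
\[
A_n=\{(\lambda_1,\ldots,\lambda_n)\suchthat |\lambda_i-z_i^*|<r_n n^{-1},\ i=1,\ldots,n\}.
\]
Shrinking the radius by the extra factor $n^{-1}$ is harmless at speed $n^2$. For any bounded Lipschitz test function $f$, on $A_n$ we have
\[
\Big|\int f\,dL_n-\frac1n\sum_k f(z_k^*)\Big|\le \|f\|_{\mathrm{Lip}}r_n\to0 .
\]
Combined with part (1) of \Cref{lem:construction of points}, this shows $A_n\subset\{L_n\in B(\nu,\delta)\}$ for all large $n$, provided the ball is taken in a metric compatible with the weak topology, for instance the bounded-Lipschitz metric. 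Since the permutations of the coordinates give disjoint copies of $A_n$, one could even gain a factor $n!$, but this is not needed.

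\textbf{Lower bound on the density over $A_n$.} By part (2) of \Cref{lem:construction of points}, the points satisfy $|z_i^*-z_j^*|\ge 4r_n$. On $A_n$ this gives
\[
|\lambda_i-\lambda_j|\ge |z_i^*-z_j^*|-2r_n\ge \tfrac12|z_i^*-z_j^*|,
\]
so that
\[
\prod_{i<j}|\lambda_i-\lambda_j|^\beta\ge 2^{-\beta n^2/2}\prod_{i<j}|z_i^*-z_j^*|^\beta .
\]
Since $V$ is continuous, it is uniformly continuous on a neighbourhood of the square $D$. Hence
\[
n\sum_i V(\lambda_i)\le n\sum_i V(z_i^*)+n^2\omega(r_n),
\]
where $\omega$ is a modulus of continuity of $V$, and $\frac1n\sum_iV(z_i^*)\to\int V\,d\nu$ by weak convergence, because $V$ is bounded and continuous on $D$. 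The Lebesgue volume of $A_n$ is $(\pi r_n^2n^{-2})^n=e^{-O(n\log n)}=e^{o(n^2)}$. Collecting these estimates,
\[
\frac1{n^2}\log\P^{\mathbb C}_{n,\beta}(A_n)\ge \frac{\beta}{2}\cdot\frac1{n^2}\sum_{i\ne j}\log|z_i^*-z_j^*|-\int V\,d\nu-\frac{\beta\log2}{2}-\frac1{n^2}\log Z^{\mathbb C}_{n,\beta}+o(1).
\]

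\textbf{Removing the constant $\tfrac{\beta\log 2}{2}$.} This constant must be eliminated, and it is the main technical point. The fix is to use a radius $\varepsilon_n=\eta/\sqrt n$ with $\eta$ small compared with $1/\sqrt C$, and to exploit that the distance bound is only tight for near neighbours. Then
\[
|\lambda_i-\lambda_j|\ge|z_i^*-z_j^*|\big(1-2\varepsilon_n/|z_i^*-z_j^*|\big).
\]
For pairs with $|z_i^*-z_j^*|\ge K/\sqrt n$ the multiplicative loss is at most $1-2\eta/K$. There are only $O(K^2n)$ pairs within distance $K/\sqrt n$, because of the density bound $g_\nu\le C$ and the separation, and each contributes a loss of order at most $\log(1/\eta)$. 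Hence the total loss is $n^2\,O(\eta/K)+O(K^2 n\log(1/\eta))$, which is $o(n^2)$ after letting $n\to\infty$, then $\eta\to0$ and then $K\to\infty$. Alternatively, one can keep radius $r_n$ and absorb the constant by an averaging argument; I would try the near-neighbour counting first.

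\textbf{Conclusion.} Applying \Cref{lem:energy} to the Vandermonde sum, and using the known asymptotics $\frac1{n^2}\log Z^{\mathbb C}_{n,\beta}\to -c_\beta$ from \cite{MR3262506}, implicit in \Cref{re:ldp}, we obtain
\[
\liminf_{n\to\infty}\frac1{n^2}\log\P^{\mathbb C}_{n,\beta}(L_n\in B(\nu,\delta))\ge \frac\beta2\iint\log|z-w|\,d\nu\,d\nu-\int V\,d\nu+c_\beta=-I_\beta(\nu).
\]
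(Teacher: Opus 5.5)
Your argument is correct and follows the same scheme as the paper: confine each $\lambda_i$ to a small disc around $z_i^*$, bound the Vandermonde factor from below, control $V$ by uniform continuity, note that the volume cost is only $e^{-O(n\log n)}$, and conclude with \Cref{lem:energy} and the asymptotics of $Z^{\mathbb C}_{n,\beta}$. The only divergence is that the constant $\frac{\beta\log 2}{2}$ you work to remove comes from using $2r_n$ instead of the actual $2r_n/n$ in the triangle inequality: with your own radius $r_n/n$ each pair loses only a factor $1-\frac{1}{2n}$, so the total loss is $(1-\frac{1}{2n})^{\beta\binom{n}{2}}=e^{-O(n)}$ and the near/far-neighbour splitting is unnecessary (the paper does exactly this with radius $n^{-3}$ and per-pair factor $1-2n^{-2}$).
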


\begin{fact}\label{fact:2}\cite{ben1998large}
	Assume that $\mu \in M_1(\mathbb{C})$ does not possess atoms, and that 
	$I(\mu) < \infty$. Let $\mu_\varepsilon = \mu * \gamma_\varepsilon$, 
	where $\gamma_\varepsilon$ is the standard centered Gaussian law on 
	$\mathbb{C}^2$ of $\varepsilon I$ covariance 
	($*$ denotes the convolution). Then $\mu_\varepsilon \rightarrow \mu$ and 
	\[
	I_\beta(\mu_\varepsilon) \rightarrow I_\beta(\mu), \mbox{ as $\varepsilon \to 0$}.
	\tag{2.3}
	\]
\end{fact}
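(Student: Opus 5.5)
The plan is to treat the two ingredients of $I_\beta$ separately, after writing $\mu_\varepsilon$ probabilistically. Let $X,Y$ be independent with law $\mu$, and let $G,G'$ be independent standard complex Gaussians, independent of $(X,Y)$. Then $X+\sqrt{\varepsilon}G$ and $Y+\sqrt{\varepsilon}G'$ are independent with law $\mu_\varepsilon$. Weak convergence $\mu_\varepsilon\to\mu$ is immediate, since $X+\sqrt{\varepsilon}G\to X$ almost surely and bounded convergence applies to each $f\in C_b(\mathbb C)$. It therefore remains to prove $\Sigma(\mu_\varepsilon)\to\Sigma(\mu)$ and $\int V\,d\mu_\varepsilon\to\int V\,d\mu$.

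\emph{Step 1: the logarithmic energy.} We have $\Sigma(\mu_\varepsilon)=\mathbb E\log|X-Y+\sqrt{2\varepsilon}\,W|$, where $W=(G-G')/\sqrt2$ is again a standard complex Gaussian independent of $(X,Y)$. The key tool is the radial mean-value identity for the subharmonic function $\log|\cdot|$. For a rotation-invariant random variable $R$ on $\mathbb C$ and any $a\in\mathbb C$,
\[
\mathbb E\log|a+R|=\mathbb E\max\bigl(\log|a|,\log|R|\bigr)=\log|a|+\mathbb E\bigl(\log|R|-\log|a|\bigr)_+ .
\]
Apply this with $a=X-Y$ and $R=\sqrt{2\varepsilon}W$, conditionally on $(X,Y)$. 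For each $a\neq0$ the correction term $\mathbb E(\log|\sqrt{2\varepsilon}W|-\log|a|)_+$ decreases monotonically to $0$ as $\varepsilon\downarrow0$.

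Since $I_\beta(\mu)<\infty$, we have $\int\log(1+|x|)\,d\mu<\infty$ and $\Sigma(\mu)>-\infty$. Hence $\log|X-Y|$ is integrable, and in particular $X\neq Y$ almost surely, which is consistent with $\mu$ having no atoms. For $\varepsilon\le1$ we have the domination
\[
\bigl|\max(\log|a|,\log|\sqrt{2\varepsilon}W|)\bigr|\le|\log|a||+\log_+(\sqrt2|W|).
\]
The right-hand side is integrable because $\log|X-Y|$ is integrable and $W$ is Gaussian. Dominated (or monotone) convergence then gives $\Sigma(\mu_\varepsilon)\to\Sigma(\mu)$, with $\Sigma(\mu_\varepsilon)\ge\Sigma(\mu)$ along the way. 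The same domination shows $\int\log(1+|x|)\,d\mu_\varepsilon<\infty$, so $\Sigma(\mu_\varepsilon)$ is given by the double-integral formula and not set to $-\infty$.

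\emph{Step 2: the potential term.} Since $V$ is continuous, $V(X+\sqrt\varepsilon G)\to V(X)$ almost surely. Because $V\ge0$, Fatou's lemma gives $\liminf_{\varepsilon\to0}\int V\,d\mu_\varepsilon\ge\int V\,d\mu$. For the reverse inequality we need a uniformly integrable majorant of $V(X+\sqrt\varepsilon G)$. In the setting of \cite{ben1998large}, $V(z)=|z|^2$, and the computation is exact:
\[
\int V\,d\mu_\varepsilon=\int V\,d\mu+\varepsilon\,\mathbb E|G|^2 .
\]
For a general $V$ satisfying \Cref{ass:V}, I would aim for a subadditivity-type bound
\[
V(x+y)\le C\bigl(1+V(x)+h(y)\bigr),\qquad |y|\le1,
\]
with $h$ integrable against the Gaussian law. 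I would split on the event $\{|\sqrt\varepsilon G|\le1\}$ and its complement. On the first event, one could try to use the $C^{1,1}_{\mathrm{loc}}$ regularity of $V$ to get $|V(x+y)-V(x)|\le|y|\sup_{B(x,1)}|\nabla V|$. On the complement, the Gaussian tail is available. Dominated convergence then finishes the argument.

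Combining Steps 1 and 2 gives $I_\beta(\mu_\varepsilon)=\int V\,d\mu_\varepsilon-\frac\beta2\Sigma(\mu_\varepsilon)-c_\beta\to I_\beta(\mu)$.

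The main obstacle is Step 2. \Cref{ass:V} constrains $V$ only from below at infinity, and continuity alone does not prevent $\int V\,d\mu_\varepsilon$ from being infinite when $V$ grows super-Gaussianly. So the domination in Step 2 is exactly where an extra growth hypothesis on $V$ may be needed, as in the quadratic case of \cite{ben1998large}. If it cannot be removed, I would record it explicitly as a hypothesis of the Fact. The logarithmic part in Step 1 is robust and uses only $I_\beta(\mu)<\infty$.
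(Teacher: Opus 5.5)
Your Step 1 is correct and complete. Conditionally on $(X,Y)$, Jensen's formula gives $\mathbb{E}\log|a+\sqrt{2\varepsilon}\,W|=\mathbb{E}\max(\log|a|,\log|\sqrt{2\varepsilon}\,W|)$, and the correction term is monotone in $\varepsilon$. Your majorant $|\log|X-Y||+\log_+(\sqrt{2}|W|)$ is integrable, because $I_\beta(\mu)<\infty$ forces both a finite logarithmic moment and $\Sigma(\mu)>-\infty$. Hence $\Sigma(\mu_\varepsilon)\downarrow\Sigma(\mu)$. This argument uses only the rotation invariance of the increment, not its Gaussianity. The paper gives no argument of its own for this Fact. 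It only cites \cite{ben1998large}, whose setting is the Ginibre case $V(z)=|z|^2$. In that setting, your Step 1 together with $\int|z|^2\,d\mu_\varepsilon=\int|z|^2\,d\mu+\varepsilon\,\mathbb{E}|G|^2$ is a full proof.

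The gap is Step 2, and your suspicion is justified: it cannot be closed, because the statement is false for the potentials admitted by Assumption~\ref{ass:V}. Take $V(z)=e^{|z|^4}$, which is smooth and satisfies the growth condition. Let $\mu$ be the uniform law on the unit disk, so that $I_\beta(\mu)<\infty$. Since $|z-x|\le|z|+1$ for $x$ in the support of $\mu$,
\[
\frac{d\mu_\varepsilon}{dm}(z)\ \ge\ \frac{1}{2\pi\varepsilon}\,e^{-(|z|+1)^2/(2\varepsilon)}\quad\text{for all } z\in\mathbb{C},
\qquad\text{hence}\qquad \int V\,d\mu_\varepsilon=\infty ,
\]
that is, $I_\beta(\mu_\varepsilon)=\infty$ for every $\varepsilon>0$.

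So the $C^{1,1}_{\mathrm{loc}}$ route you sketch cannot help. Local Lipschitz bounds only reproduce the pointwise convergence you already get from continuity. The obstruction sits in the Gaussian tails, which lose against super-Gaussian growth of $V$. What is missing is a hypothesis on $V$, for instance $V(x+y)\le C(1+V(x))e^{a|y|}$ for all $x,y$ (true e.g.\ for $V(z)=|z|^k$). Under that hypothesis, $V(X+\sqrt{\varepsilon}G)\le C(1+V(X))e^{a|G|}$ for $\varepsilon\le 1$ is an integrable majorant by independence, and dominated convergence finishes Step 2.

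Note that the lower bound in the proof of Proposition~\ref{proposition} invokes this Fact for general $V$, so it inherits the same defect. One way around this there is to truncate $\mu$ to a compact set first and then mollify with a compactly supported rotation-invariant kernel. Then $V$ is only evaluated on a fixed compact set, and your Step 1 argument goes through verbatim.
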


\begin{fact}\label{fact:3}\cite{MR3262506}
	Let $Z_{n,\beta}^{\C} $ be the partition function as defined in \eqref{eqn:Z}. Then
	\begin{align}\label{73} 
		\limsup_{n \to \infty} \frac{1}{n^2} \log Z_{n,\beta}^{\C} = - \inf_{\mu \in M_1^*(\mathbb{C})} I_\beta(\mu).
	\end{align}
\end{fact}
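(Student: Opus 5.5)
The plan is to read \eqref{73} as a statement about the unnormalized energy $J(\mu):=\int V\,d\mu-\frac{\beta}{2}\Sigma(\mu)$, whose infimum over $M_1(\mathbb C)$ is $c_\beta$. With the normalization in \eqref{eqn:defIbeta}, $\inf I_\beta=0$, so the right-hand side of \eqref{73} is meant as $-\inf_{M_1^*(\mathbb C)}J$. I will then prove it in two steps. First, I take the known asymptotics of the partition function on all of $M_1(\mathbb C)$. Second, I show that restricting the infimum to $M_1^*(\mathbb C)$ does not change its value.

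\emph{Step 1.} From \cite[Theorem 1.1]{MR3262506}, whose content is summarized in \Cref{re:ldp}, I take the free energy asymptotics
\[
\lim_{n\to\infty}\frac{1}{n^2}\log Z^{\mathbb C}_{n,\beta}=-\inf_{\mu\in M_1(\mathbb C)}J(\mu)=-c_\beta .
\]
This relies on the growth condition in \Cref{ass:V}. Both the existence of the limit and its value are part of that result, because the large deviation principle there is proved precisely by showing that the normalized $\log Z$ converges to $-c_\beta$.

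\emph{Step 2.} I compare the two infima. Trivially $\inf_{M_1^*(\mathbb C)}J\ge\inf_{M_1(\mathbb C)}J$. For the reverse inequality it suffices to show that the minimizer $\mu^\star$ of $J$ over $M_1(\mathbb C)$ lies in $M_1^*(\mathbb C)$. Writing $J(\mu)=\frac{\beta}{2}\bigl(\iint\log\frac{1}{|z-w|}\,d\mu\,d\mu+2\int\frac{V}{\beta}\,d\mu\bigr)$, the minimizer $\mu^\star$ is the weighted equilibrium measure $\mu_Q$ for $Q=V/\beta$. This $Q$ is in $C^{1,1}_{\rm loc}$ and satisfies the growth hypothesis of \Cref{re:seferty}. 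Hence \Cref{cor:measureupper} gives $\mu^\star(A)\le C_2\,m(A)$, and \Cref{cor:tube_estimate_rectifiable} gives $m((\partial U)_\epsilon)\le C_1\epsilon$ for $\epsilon<r_0$. This is the argument of \Cref{cor:equillibriummeasure}, and it yields $\mu^\star((\partial U)_\epsilon)\le C_1C_2\,\epsilon$ for small $\epsilon$. Therefore $\mu^\star\in M_1^*(\mathbb C)$, the two infima coincide, and \eqref{73} follows with the $\limsup$ actually a limit.

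\emph{Main obstacle.} The delicate point is the constants in the definition \eqref{eqn:defm1F} of $M_1^*(\mathbb C)$. Membership requires $\mu^\star((\partial U)_\epsilon)\le c\,\epsilon$ with the prescribed $c$ and for all $0<\epsilon\le\delta_0$. The argument above only produces the constant $C_1C_2$ and the range $\epsilon<r_0$. I would first handle this by fixing the constants compatibly, namely taking $\delta_0\le r_0$ and $c\ge C_1C_2$, as the phrase ``$c$ and $\delta_0$ are two positive constants'' permits. If the specific choice $c=2\mathcal H^1(\partial U)$ must be kept, I would fall back on an approximation argument. Using \Cref{fact:2} and a convex-combination perturbation with the uniform density on a region far from $\partial U$, one can produce measures in $M_1^*(\mathbb C)$ whose energies converge to $J(\mu^\star)$. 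The lower semicontinuity of $J$ and the closedness of $M_1^*(\mathbb C)$ shown in \Cref{lem:goodrateI} keep the infimum over $M_1^*(\mathbb C)$ equal to $c_\beta$.
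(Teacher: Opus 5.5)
Your main line is correct and is essentially the paper's route. The paper gives no argument beyond the citation of \cite{MR3262506}, and the one extra input needed to pass from $M_1(\mathbb C)$ to $M_1^*(\mathbb C)$ is that the minimizer of $J$ lies in $M_1^*(\mathbb C)$, which is exactly \Cref{cor:equillibriummeasure}. Your reading of the right-hand side as $-\inf_{M_1^*(\mathbb C)}J$ is the one the paper actually uses. In the upper bound and in \Cref{lem:openset}, \Cref{fact:3} is invoked as $n^{-2}\log Z^{\mathbb C}_{n,\beta}\to -c_\beta$ in both directions, so proving a genuine limit, as you do, is what is needed. Also, $Q=V/\beta$ is the correct external field; the paper's $V/2$ is right only for $\beta=2$.

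The fallback in your last paragraph, however, cannot work and should be dropped. For fixed $c,\delta_0$ the set $M_1^*(\mathbb C)$ is weakly closed (proof of \Cref{lem:goodrateI}), and $J$ is a good rate function with a unique minimizer $\mu^\star$. So if $\nu_k\in M_1^*(\mathbb C)$ and $J(\nu_k)\to c_\beta$, a subsequence converges to some $\nu\in M_1^*(\mathbb C)$ with $J(\nu)\le c_\beta$, i.e.\ $\nu=\mu^\star$. Any approximation scheme therefore succeeds only when $\mu^\star\in M_1^*(\mathbb C)$ already; otherwise $\inf_{M_1^*(\mathbb C)}J>c_\beta$ strictly. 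Your convex combination $(1-t)\mu^\star+t\rho$ only multiplies the mass of $(\partial U)_\epsilon$ by $1-t$ for small $\epsilon$, which is useless as $t\to 0$.

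The bad case really occurs for $c=2\mathcal H^1(\partial U)$. For $V(z)=K|z|^2$ the minimizer is uniform on a disc with density $2K/(\pi\beta)$. Take $U$ a smaller concentric disc. Then $\mu^\star((\partial U)_\epsilon)=\frac{4K}{\pi\beta}\,\mathcal H^1(\partial U)\,\epsilon$ for all small $\epsilon$, which exceeds $2\mathcal H^1(\partial U)\,\epsilon$ once $K>\pi\beta/2$. In that case $\lim n^{-2}\log Z^{\mathbb C}_{n,\beta}=-c_\beta>-\inf_{M_1^*(\mathbb C)}J$, so the identity fails. Choosing $c\ge C_1C_2$ and $\delta_0\le r_0$ (depending on $V$, $\beta$, $U$) is therefore a necessity, not a convenience. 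The same tacit choice is required for \Cref{cor:equillibriummeasure} itself.
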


Now we are ready  to present a prove the proposition for $\mathbb F=\mathbb C$.
\begin{proof}[Proof of \Cref{proposition} for $\mathbb F=\C$]
	This proof follows the same approach as in \cite{ben1998large} and \cite{MR3262506}. However, for the sake of completeness, we provide a proof here.
	
	\vspace{.1cm}
	\noindent \underline{\it Lower bound:}
	First we  show the lower bound of the large deviation, that is, the following holds for any open subset $\mathcal{O}\subset \mathcal M_1^*(\mathbb{C})$
	\[
	\liminf_{n\to \infty}\frac{1}{n^2} \log\P_{n,\beta}^{\C}(L_n\in \mathcal{O})\geq -\inf_{\mu\in \mathcal{O}}I_\beta(\mu).
	\]
	To prove the last inequality, enough to show that for $\delta>0$,
	\begin{align}\label{eqn:singleton}
		\liminf_{n\to \infty}\frac{1}{n^2} \log\P_{n,\beta}^{\C}(L_n\in \mathcal{B(\mu,\delta)})\geq -I_\beta(\mu), \mbox{ for all } \mu\in \mathcal O.
	\end{align}
	Indeed, suppose $\mu\in \mathcal O$ then there exists $\delta>0$ such that $\mathcal B(\mu,\delta)\subset \mathcal O$.
	Thus 
	\begin{align*}
		\liminf_{n\to \infty}\frac{1}{n^2} \log\P_{n,\beta}^{\C}(L_n\in \mathcal{O})\geq 	\liminf_{n\to \infty}\frac{1}{n^2} \log\P_{n,\beta}^{\C}(L_n\in \mathcal{B(\mu,\delta)})\geq -I_\beta(\mu),
	\end{align*}		
	for all $\mu\in \mathcal O$. Hence the result.		
	
	 \noindent It remains to show \eqref{eqn:singleton}. Observe that \eqref{eqn:singleton}  holds trivially if $I_\beta(\mu)=\infty$. Thus it is enough to consider the measure $\mu $ which is non atomic.
	\Cref{lem:openset} implies that \eqref{eqn:singleton} holds if $\mu $ is compactly supported with positive density on it. Suppose $\mu$ is a probability measure such that it has  positive density everywhere  and $I_\beta(\mu)<\infty$.  Then we can consider a sequence $\{\mu_n\}$ of measures which are supported on a square with positive density everywhere that are converging to  $\mu$ as $n \to \infty $. That can be done if we consider a sequence of $D_n$ of squares in the complex plane and which are increasing to the $\mathbb{C}$, i.e. $D_n\subset D_{n+1}$ for all $n\in \mathbb{N}$ and $\cup_{1}^{\infty}D_n=\mathbb{C}$. With this construction in hand, we  create the following sequence of probability measures 
	\[
	\mu_n=\frac{\mu_{{|}_{D_n}}}{\mu(D_n)}, \mbox{ for each $n\in \mathbb{N}$.}
	\] 
	Since $\mu_n$ converges to $\mu$, there exists $N\in \N$ (depends on $\delta$) such that 
	\[
	B(\mu_n,\delta/2)\subseteq B(\mu,\delta), \mbox{ for all } n\ge N.
	\]
	Therefore \eqref{eqn:singleton} implies that 
	\begin{align*}
		\liminf_{n \to \infty}\frac{1}{n^2}\log P_{n,\beta}^{\mathbb C}(L_n\in B(\mu,\delta))&\ge  \liminf_{n \to \infty}\frac{1}{n^2}\log P_{n,\beta}^{\mathbb C}(L_n\in B(\mu_n,\delta))
		\\&\ge -\lim_{n\to \infty} I_{\beta}(\mu_n)= -I_\beta(\mu).
	\end{align*}
	The last equality follows from the monotone convergence theorem. Thus \eqref{eqn:singleton} holds when measure has a positive continuous density in the complex plane.
	
	\noindent Finally, consider the measure $\mu$ such that $I_\beta(\mu)<\infty$. Which implies that $\mu$ does not possess atoms. Then Fact
	\ref{fact:2} implies that there exists a sequence of measures $\mu_n$ with positive density in the complex plane such $\mu_n$ converges to $\mu $ weakly and $I_\beta(\mu_n)\to I_\beta(\mu)$ as $n\to \infty$. Then by the same argument as above we conclude that this $\mu$ satisfies \eqref{eqn:singleton}. Thus \eqref{eqn:singleton} holds for every $\mu \in M_1^*(\mathbb C)$. Hence the lower bound.
	
	\vspace{.1cm}
	\noindent \underline{\it Upper bound:}  Let $\overline{P}^{\C}_{n, \beta} = Z_{n,\beta}^{\C}\P_{n, \beta}^{\C}$. First we show that the following holds
	\begin{align}\label{72}
		\lim_{\epsilon \to 0} \limsup_{n \to \infty} \frac{1}{n^2} \log \overline{P}^{\C}_{n, \beta} \left( d(L_n, \mu) \leq \epsilon \right) \leq - I_\beta(\mu), \mbox{ for all $\mu \in M_1(\mathbb{C})$}.
	\end{align}
	\noindent Denote $f(z,w):=\frac{1}{2}V(z)+\frac{1}{2}V(w)-\frac{\beta}{2}\log |z-w|$, for $z,w\in \C$. For any $M \geq 0$, set $f_M(z,w) = f(z,w) \wedge M$. Then, we have the following bound
	\begin{align}\label{71}
		\overline{P}^{\C}_{n, \beta} \left( d(L_n, \mu) \leq \epsilon \right) \leq \idotsint\limits_{d(L_n, \mu) \leq \epsilon} e^{-n^2 \iint\limits_{z \neq w} f_M(z,w) dL_n(z) dL_n(w)} \prod_{i=1}^n e^{-V(\lambda_i)} d\lambda_i.
	\end{align}
 Note that the $\{\lambda_1,\ldots, \lambda_n\}$ are distinct almost surely (with respect to the Lebesgue measure). Then $L_n \otimes L_n(x = y) = n^{-1}$, $\overline{P}^{n}_{V, \beta}$ almost surely. Thus
	\begin{align}\label{eqn:upper}
		\iint\limits_{z,w\in \C} f_M(z,w) dL_n(z) dL_n(w) = \iint\limits_{z \neq w} f_M(z,w) dL_n(z) dL_n(w) + M n^{-1}.
	\end{align}
	Therefore, using \eqref{eqn:upper} in \eqref{71}, we get
	\begin{align*}
		\overline{P}^{\C}_{n, \beta} \left( d(L_n,\mu)
		\leq \epsilon \right) 
		&\leq e^{Mn} \idotsint\limits_{d(L_n, \mu) \leq \epsilon} e^{-n^2 \iint\limits_{z,w\in \C} f_M(z,w) dL_n(z) dL_n(w)} \prod_{i=1}^n e^{-V(\lambda_i)} d\lambda_i
		\\&=e^{Mn} \idotsint\limits_{d(L_n, \mu) \leq \epsilon}e^{-n^2I^M_\beta(L_n)}\prod_{i=1}^n  e^{-V(\lambda_i)} d\lambda_i,
	\end{align*}
	where $I_\beta^M(\nu)=\iint f_M(z,w)d\nu(z)d\nu(w)$, for all $\nu\in M_1(\mathbb{C})$ is continuous on $M_1({\mathbb{C}})$. Now choose $\epsilon=\frac{1}{m_k}>0$ sufficiently small enough so that $I_\beta^M(L_n)+\frac{1}{k}\geq I_\beta^M(\mu)$ for $d(L_n,\mu)
	\leq\frac{1}{m_k}$ and for $\frac{1}{k}$, for all $k\in \mathbb{N}$. Applying this inequality we'll have
	 \begin{align*}
	 		\overline{P}^{\C}_{n, \beta} \left( d(L_n,\mu)
	 	\leq \frac{1}{m_k} \right) 
	 	& \le e^{-n^2I^M_\beta(\mu)+\frac{n^2}{k}}e^{Mn}\idotsint\limits_{d(L_n, \mu) \leq  \frac{1}{m_k} }\prod_{i=1}^n e^{-V(\lambda_i)} d\lambda_i
	 	\\&\leq Ce^{-n^2I^M_\beta(\mu)+\frac{n^2}{k}}e^{Mn}
	 \end{align*}
	where $C=\idotsint\limits_{d(L_n, \mu) \leq \frac{1}{m_k}}\prod_{i=1}^n  e^{-V(\lambda_i)} d\lambda_i$. Then we have
	\begin{align}
		\lim_{k\to\infty} \limsup_{n \to \infty} \frac{1}{n^2} \log \overline{P}^{\C}_{n, \beta} \left( d(L_n, \mu) \leq \frac{1}{m_k} \right) \leq -I^M_\beta(\mu),
	\end{align}
	where $I_\beta^M(\mu)= \iint f_M(x,y)d\mu(x)d\mu(y)$. Therefore by Fact \ref{fact:3} we have 
	\begin{align*}
		\lim_{k\to\infty} \limsup_{n \to \infty} \frac{1}{n^2} \log \P^{\C}_{n, \beta} \left( d(L_n, \mu) \leq \frac{1}{m_k} \right) \leq -(I^M_\beta(\mu)-c_\beta).
	\end{align*}
	By the monotone convergence theorem taking $M\to \infty$ we get
	\[
	\lim_{k\to\infty} \limsup_{n \to \infty} \frac{1}{n^2} \log \P^{\C}_{n, \beta} \left( d(L_n, \mu) \leq \frac{1}{m_k} \right) \leq -I_\beta(\mu).
	\]
	Thus \eqref{eqn:upper} holds for all $\mu \in M_1(\C)$. In particular \eqref{eqn:upper} holds for all $\mu \in M_1^*(\C)$. Hence the upper bound and the result.
	Now consider a compact set $K\subset M_1^*(\mathbb{C})$, then for any $\epsilon>0$ and for any $\mu\in K$ we have $
	\delta_{\mu}>0$ such that 
	\[
	\limsup_{n \to \infty} \frac{1}{n^2} \log \P^{\C}_{n, \beta} \left( d(L_n, \mu) \leq \delta_{\mu} \right) \leq -I_\beta(\mu)-\epsilon.
	\]
	Now $\{B(\mu,\delta_{\mu})_{\mu\in K}\}$ forms a cover for K and compactness of K ensures that there will be finite subcover say  $\{B(\mu_i,\delta_{\mu_i})_{1\leq i \leq m}\}$. Then we have that  
	\begin{align*}
	\P^{\C}_{n, \beta} ( L_n\in K)\leq \sum_{1}^{m}\P^{\C}_{n, \beta} ( L_n \in B(\mu_i, \delta_{\mu_i}))\leq m\times \max_{1}^{m}\P^{\C}_{n, \beta} ( L_n \in B(\mu_i, \delta_{\mu_i})).
	\end{align*}
	Suppose maximum attains at $i_*$. Then taking normalized logarithmic limit we get
	\begin{align*}
			\limsup_{n \to \infty} \frac{1}{n^2} \log\P^{\C}_{n, \beta} ( L_n\in K)&\leq 	\limsup_{n \to \infty} \frac{1}{n^2} \log\P^{\C}_{n, \beta}( L_n \in B(\mu_{i_*}, \delta_{\mu_{i_*}}))\\&\leq -I_{\beta}(\mu_{i_*})-\epsilon \leq -\inf_{\mu\in K}I_{\beta}(\mu)-\epsilon.
	\end{align*}
	as $\epsilon>0$ is arbitrary, tending $\epsilon\to 0$ we get the upper bound for the compact sets. Which gives the weak LDP. 
	%Then  we conclude the full LDP holds on $M_1^*(\C)$ using the exponential tightness of $\{L_n\}$ on  $M_1^*(\C)$.
	 It remains to show that the upperbounds hold for any closed sets of $M_1^*(\C)$.
	
	\vspace{.1cm}
	\noindent \underline{\it Proof of large deviation upper bound for general closed sets}.
We know that the laws of $\{L_n\}$ are exponentially tight in $M_1(\mathbb{C})$ (see \cite[Theorem 2.6.1]{anderson2010introduction} and \cite[Lemma 2.3]{MR3262506}). It means that for an arbitrary large $M>0$ there exists a compact set $K_M\subset M_1({\mathbb{C}})$ so that 
$$
\limsup_{n \to \infty}\frac{1}{n^2}\log \P_{n,\beta}^{\mathbb{C}}(L_n\in K_{M}^c)\leq -M.
$$
\\ %Consider $K^{'}=K_1\cap M_1^*(\mathbb{C})$. Note that $K'$ is compact in $M_1^*(\mathbb{C})$, as . Now the following holds 
%\begin{align*}
	%&\limsup_{n \to \infty}\frac{1}{n^2}\log P_{n,\beta}^{\mathbb{C}}(L_n\in {K'}^c)
%	\\&\leq \limsup_{n \to \infty}\frac{1}{n^2}\log P_{n,\beta}^{\mathbb{C}}(L_n\in \overline{M_1^*(C))}+\limsup_{n \to \infty}\frac{1}{n^2}\log P_{n,\beta}^{\mathbb{C}}(L_n\in K_1)
%	\\& \leq -L.
%\end{align*}
%Now using the above inequality and the fact that finitely many probability measures are exponentially tight, we have the desired result.
\noindent Consider a closed subset $\mathcal F$ of $M_1^*(\mathbb{C})$. Now the closedness of $M_1^*(\mathbb{C})$ in $M_1(\mathbb{C})$ will imply that $\mathcal F$ is also closed in $M_1(\mathbb{C})$. Then we have
\begin{align*}
	&\limsup_{n\to\infty}\frac{1}{n^2} \log \P_{n,\beta}^{\mathbb{C}}(L_n\in \mathcal F)\\&=\limsup_{n\to\infty}\frac{1}{n^2} \log \P_{n,\beta}^{\mathbb{C}}(L_n\in \{\mathcal F\cap K_M\}\cup \{\mathcal F\cap K_{M}^c\})\\&\leq \max\{\limsup_{n\to\infty}\frac{1}{n^2} \log \P_{n,\beta}^{\mathbb{C}}(L_n\in \{\mathcal F\cap K_M\}),\limsup_{n\to\infty}\frac{1}{n^2} \log \P_{n,\beta}^{\mathbb{C}}(L_n\in \{\mathcal F\cap K_{M}^c\})\}\\&\leq  \max\{\limsup_{n\to\infty}\frac{1}{n^2} \log \P_{n,\beta}^{\mathbb{C}}(L_n\in \{\mathcal F\cap K_M\}),\limsup_{n\to\infty}\frac{1}{n^2} \log \P_{n,\beta}^{\mathbb{C}}(L_n\in K_{M}^c)\}\\& \leq  \max\{\limsup_{n\to\infty}\frac{1}{n^2} \log \P_{n,\beta}^{\mathbb{C}}(L_n\in \{\mathcal F\cap K_M\}),-M\}\\&\leq \max \{-\inf_{\mathcal F\cap K_M}I_{\beta}(\mu),-M\}\leq \max \{-\inf_{\mathcal F}I_{\beta}(\mu),-M\}.
\end{align*}
\noindent Now taking M tending to infinity gives us the large deviation upper bound for the laws of  $L_n$ in $M_1^*(\mathbb{C})$. This completes the proof.
\end{proof}

\subsection{Proofs of the auxiliary lemmas}
In this section we provide the proofs of  \Cref{lem:construction of points}, \Cref{lem:energy} and \Cref{lem:openset}.
\begin{proof}[Proof of \Cref{lem:construction of points}] The proof is divided in three parts. First, we construct the points that satisfy both properties of the lemma. Second, we show that the empirical measure of these points converges to the measure $\nu$. Finally, we show that the separation of these points.

	\noindent{\it Construction of points.}	{\it Case I: }
	First we construct $n$ points when $n=m^2$ for some $m\in \mathbb{N}$.  Without loss of generality we assume that the four vertices of D are $(0,0), (L,0), (L,L), (0,L)$ as shown in Figure 1, where $L$ denotes the side length of $D$.  First we divide $D$ vertically such that the measure of each rectangle is $1/m$. See Figure 1. More precisely, choose $0=x_0, x_1,\ldots, x_m=L$ such that 
	\[
	\nu([x_{i-1},x_i]\times [0,L])=\frac{1}{m}=\frac{1}{\sqrt n}, \mbox{ for all } i=1,\ldots, m-1.
	\]
	This can be done using the continuity of density function for $\nu$. Then each rectangle $\bar R_i=[x_{i-1},x_i]\times [0,L]$ is divided horizontally such a way that the measure of each box is $1/m^2$. That is, for each $1\le i\le m$, we choose $0=y_{i0},y_{i1},\ldots,y_{im}=L$ such that, for each  $1\le j\le m$, 
	\begin{align}\label{def:Rij}
		\nu(R_{i,j})=\frac{1}{m^2}=\frac{1}{n}, \mbox{ where }	R_{ij}=[x_{i-1},x_i]\times [y_{i(j-1)},y_{ij}].
	\end{align}
	In this case, we consider $\{z_{ij}^*\suchthat 1\le i,j\le m\}$, where $z_{ij}^*$ is the intersection point of two diagonals of $R_{ij}$. We arrange these points in dictionary order and rename them as $\{z_1^*,\ldots,z_n^*\}$.
	
	%%%%%%%%%%%\begin{figure}[h]
	%%%%%%%%%%%%%\centering
	%%%%%%%%%%%%%	\includegraphics[width=0.5\linewidth]{Notes_251121_015420_page-0001.jpg}
	%%%%%%%%%%%%%%%%%%%	\caption{{}}
	%%%%%%%%%%%%%%	\end{figure}

\begin{figure}[htbp]
	\centering
	\begin{tikzpicture}[scale=0.85, thick]
		% Outer boundary of square D
		\draw (0,0) rectangle (12,12);
		
		% Vertical partition lines
		\draw (2,0) -- (2,12);
		\draw (4,0) -- (4,12);
		\draw[dashed] (6,0) -- (6,12); % Start of the gap
		\draw[dashed] (8,0) -- (8,12); % End of the gap
		\draw (10,0) -- (10,12);
		
		% --- Column 1 ---
		\draw (0,3) -- (2,3);
		\draw (0,6) -- (2,6);
		\draw[dashed] (0,8) -- (2,8);
		\draw[dashed] (0,10) -- (2,10);
		\filldraw (1, 1.5) circle (1.5pt) node[below=2pt] {$z^*_1$};
		\filldraw (1, 4.5) circle (1.5pt) node[below=2pt] {$z^*_2$};
		\filldraw (1, 7) circle (1.5pt) node[below=2pt] {$z^*_3$};
		\node at (1, 9.15) {$\vdots$};
		\filldraw (1, 11) circle (1.5pt) node[below=2pt] {$z^*_m$};
		
		% --- Column 2 ---
		\draw (2,4) -- (4,4);
		\draw (2,7) -- (4,7);
		\draw[dashed] (2,8.5) -- (4,8.5);
		\draw[dashed] (2,10.5) -- (4,10.5);
		\filldraw (3, 2) circle (1.5pt) node[below=2pt] {$z^*_{m+1}$};
		\filldraw (3, 5.5) circle (1.5pt) node[below=2pt] {$z^*_{m+2}$};
		\filldraw (3, 7.75) circle (1.5pt) node[below=2pt] {$z^*_{m+3}$};
		\node at (3, 9.6) {$\vdots$};
		\filldraw (3, 11.25) circle (1.5pt) node[below=2pt] {$z^*_{2m}$};
		
		% --- Column 3 (Fixed: Added parallel dashed line & dot) ---
		\draw (4,3) -- (6,3);
		\draw (4,5.5) -- (6,5.5);
		\draw[dashed] (4,7.5) -- (6,7.5);
		\draw[dashed] (4,9.5) -- (6,9.5);
		\filldraw (5, 1.5) circle (1.5pt) node[below=2pt] {$z^*_{2m+1}$};
		\filldraw (5, 4.25) circle (1.5pt) node[below=2pt] {$z^*_{2m+2}$};
		\filldraw (5, 6.5) circle (1.5pt) node[below=2pt] {$z^*_{2m+3}$};
		\node at (5, 8.5) {$\vdots$};
		\filldraw (5, 10.75) circle (1.5pt) node[below=2pt] {$z^*_{3m}$};
		
		% --- Center Gap (Column 4) ---
		% Parallel horizontal dots
		\node at (7, 2.5) {$\cdots$};
		\node at (7, 6) {$\cdots$};
		\node at (7, 9.5) {$\cdots$};
		
		% --- Column m-1 (Second to last) ---
		\draw (8,3.5) -- (10,3.5);
		\draw (8,6.5) -- (10,6.5);
		\draw[dashed] (8,8.5) -- (10,8.5);
		\draw[dashed] (8,10) -- (10,10);
		\filldraw (9, 1.75) circle (1.5pt) node[below=2pt] {$z^*_{m^2-2m+1}$};
		\filldraw (9, 5) circle (1.5pt) node[below=2pt] {$z^*_{m^2-2m+2}$};
		\filldraw (9, 7.5) circle (1.5pt) node[below=2pt] {$z^*_{m^2-2m+3}$};
		\node at (9, 9.35) {$\vdots$};
		\filldraw (9, 11) circle (1.5pt) node[below=2pt] {$z^*_{m^2-m}$};
		
		% --- Column m (Last) (Fixed: Added parallel dashed line & dot) ---
		\draw (10,3.5) -- (12,3.5);
		\draw (10,6) -- (12,6);
		\draw[dashed] (10,8) -- (12,8);
		\draw[dashed] (10,10) -- (12,10);
		\filldraw (11, 1.75) circle (1.5pt) node[below=2pt] {$z^*_{m^2-m+1}$};
		\filldraw (11, 4.75) circle (1.5pt) node[below=2pt] {$z^*_{m^2-m+2}$};
		\filldraw (11, 7) circle (1.5pt) node[below=2pt] {$z^*_{m^2-m+3}$};
		\node at (11, 9) {$\vdots$};
		\filldraw (11, 11) circle (1.5pt) node[below=2pt] {$z^*_n$};
		
		% --- Corner Labels ---
		\node[below left] at (0,0) {$(0,0)$};
		\node[below right] at (12,0) {$(L,0)$};
		\node[above right] at (12,12) {$(L,L)$};
		\node[above left] at (0,12) {$(0,L)$};
		
		% --- Bottom Domain Label ---
		\node[below, yshift=-10pt] at (6,0) {\Large $D$};
		
	\end{tikzpicture}
	\caption{Dictionary-ordered partitioning of square $D$ at the $n=m^2$-th stage.}
	\label{fig:partition_D}
\end{figure}

\noindent 
{\it Case-II: }Next we construct $n$ points when $n$ is not a perfect square. That is, there exists $m\in \N$ such that $m^2<n<(m+1)^2$. There exists $1\le k\le 2m$ such that 
\[
n=m^2+k.
\]
In this case, we choose $0=x_0,x_1,\ldots,x_m<L$ such that 
\[
\nu(R_i)=\frac{1}{\sqrt n}, \mbox{ where $R_i=[x_{i-1},x_i]\times [0,L]$ for } i=1,\ldots,m.
\]
Then, for each $i=1,\ldots, m$, choose $0=y_{i0},y_{i1},\ldots, y_{im}<L$ such that 
\[
\nu(R_{ij})=\frac{1}{n}, \mbox{ where }	R_{ij}=[x_{i-1},x_i]\times [y_{j(j-1)},y_{ij}], \mbox{ for $j=1,\ldots,m$}.
\]
Let $R=\cup_{i,j=1}^mR_{ij}$ and $R'=D\backslash R$. Observe that $\nu(R')=k/n$. Then we divide the region $R'$ from right-bottom corner to left-top corner of $D$ in $k$ regions $R_{1}',\ldots, R_k'$ consecutively, as shown in Figure 2, such that 
\[
\nu(R_i')=\frac{1}{n}, \mbox{ for } i=1,\ldots, k.
\]
Thus we have $n$ regions $\{R_{ij}\suchthat 1\le i,j\le m\}\cup \{R_i'\suchthat 1\le i\le k\}$ such that the measure of each region is $1/n$. We choose first $m^2$ points $\{z_1^*,\ldots,z_{m^2}^*\}$ as before. The rest of the $k$ points are chosen from the $k$ regions $\{R_i'\suchthat 1\le i\le k\}$.  We choose $z_{m^2+i}^*\in R_i'\cap \{\{x=L\}\cup \{y=L\}\}$ (from the right or top side of $D$ on $R_i'$) such that $R_i'$ is divided in two equal (measure) parts.
Thus we have $n$ many points $\{z_1^*,\ldots,z_n^*\}$ from $D$.
Hence we have $n$ points in both cases.

%%%	\begin{figure}[h]
	%%	\centering
	%%%%\includegraphics[width=0.5\linewidth]{1000388945.jpg}
	%%%%%	\label{fig:placeholder}
	%%%%%%%%	\end{figure}
%\end{frame}
\begin{figure}[htbp]
	\centering
	\begin{tikzpicture}[scale=0.9, thick]
		% Outer boundary of the domain D
		\draw (0,0) rectangle (12,12);
		
		% Corner Labels
	\node[below left] at (0,0) {$(0,0)$};
	\node[below right] at (12,0) {$(L,0)$};
	\node[above right] at (12,12) {$(L,L)$};
	\node[above left] at (0,12) {$(0,L)$};
	
		% Bottom Domain Label
		\node[below, yshift=-10pt] at (6,0) {\Large $D$};
		
		% ==========================================
		% --- VERTICAL BOUNDARIES ---
		% ==========================================
		
		% Col 1 | Col 2 
		% Solid line covers the left side of z_2m^*, then becomes dashed above it
		\draw (2,0) -- (2,10.5);         
		
		\draw (4,0) -- (4,10.5);        % Col 2 | Col 3
		
		% Col 3 | Gap
		% Dashed line on the right side of z_3m^* up to the roof
		\draw[dashed] (5.5,0) -- (5.5,9.5); 
		
		\draw[dashed] (7,0) -- (7,9.5);     % Gap | Col 5
		\draw (8.75,0) -- (8.75,9.5);       % Col 5 | Col 6 
		\draw (10.5,0) -- (10.5,10.5);      % Col 6 | Right Free Space 
		
		% ==========================================
		% --- ROOFS OF THE COLUMNS ---
		% ==========================================
		
		\draw (0,9.5) -- (2,9.5);         % Roof of Col 1
		\draw (2,10.5) -- (4,10.5);       % Roof of Col 2
		\draw (4,9.5) -- (5.5,9.5);       % Roof of Col 3
		
		% Roof of the Gap 
		\draw (5.5,9.5) -- (7,9.5);     
		
		\draw (7,9.5) -- (8.75,9.5);      % Roof of Col 5
		\draw (8.75,10.5) -- (10.5,10.5); % Roof of Col 6
		
		% ==========================================
		% --- UPPER FREE SPACE DIVIDERS ---
		% ==========================================
		
		% Divider extending above Col 1 (Right side of z_n^* region)
		\draw[dashed] (2,10.5) -- (2,12);
		
		% Dividers extending above Col 3 
		\draw[dashed] (4,10.5) -- (4,12); 
		\draw (5.5,9.5) -- (5.5,12);      % Made solid vertical line where z_{m^2+4} lies
		
		% Dividers extending above Col 5 
		\draw (7,9.5) -- (7,12);
		\draw (8.75,9.5) -- (8.75,12);
		
		% Divider extending above Col 6 (Made solid vertical line instead of dots)
		\draw (10.5,10.5) -- (10.5,12);
		
		% Right side free space horizontal dividers
		\draw (10.5, 3.5) -- (12, 3.5);
		\draw (10.5, 7.5) -- (12, 7.5);
		
		% ==========================================
		% --- POINTS & REGIONS WITHIN COLUMNS ---
		% ==========================================
		
		% Column 1
		\draw (0,3) -- (2,3);
		\draw (0,5.5) -- (2,5.5);
		\draw[dashed, thin] (0,7) -- (2,7);
		\draw[dashed, thin] (0,8) -- (2,8);
		\filldraw (1, 1.5) circle (1.5pt) node[above=2pt] {$z^*_1$};
		\filldraw (1, 4.25) circle (1.5pt) node[above=2pt] {$z^*_2$};
		\filldraw (1, 6.25) circle (1.5pt) node[above=2pt] {$z^*_3$};
		\node at (1, 7.6) {$\vdots$};
		\filldraw (1, 8.75) circle (1.5pt) node[above=2pt] {$z^*_m$};
		
		% Column 2
		\draw (2,3.5) -- (4,3.5);
		\draw (2,6) -- (4,6);
		\draw (2,7.5) -- (4,7.5);
		\draw[dashed, thin] (2,8.5) -- (4,8.5); 
		\draw[dashed, thin] (2,9.5) -- (4,9.5); 
		\filldraw (3, 1.75) circle (1.5pt) node[above=2pt] {$z^*_{m+1}$};
		\filldraw (3, 4.75) circle (1.5pt) node[above=2pt] {$z^*_{m+2}$};
		\filldraw (3, 6.75) circle (1.5pt) node[above=2pt] {$z^*_{m+3}$};
		\node at (3, 9.15) {$\vdots$};
		\filldraw (3, 9.8) circle (1.5pt) node[above=2pt] {$z^*_{2m}$}; 
		
		% Column 3
		\draw (4,3) -- (5.5,3);
		\draw (4,5) -- (5.5,5);
		\draw[dashed, thin] (4,6.5) -- (5.5,6.5); 
		\draw[dashed, thin] (4,7.5) -- (5.5,7.5); 
		\filldraw (4.75, 1.5) circle (1.5pt) node[above=2pt] {$z^*_{2m+1}$};
		\filldraw (4.75, 4) circle (1.5pt) node[above=2pt] {$z^*_{2m+2}$};
		\node at (4.75, 7.15) {$\vdots$};
		\filldraw (4.75, 8.5) circle (1.5pt) node[above=2pt] {$z^*_{3m}$};
		
		% Central Gap
		\node at (6.25, 4.5) {\Large $\cdots$};
		
		% Column 5 (Col m-1)
		\draw (7,2.5) -- (8.75,2.5);
		\draw (7,4.5) -- (8.75,4.5);
		\draw[thin] (7,6.5) -- (8.75,6.5); % Removed bold line, made thin
		\draw[dashed, very thin] (7,7.5) -- (8.75,7.5); 
		\draw[dashed, very thin] (7,8.5) -- (8.75,8.5); 
		\filldraw (7.875, 1.25) circle (1.5pt) node[above=2pt] {\footnotesize $z^*_{m^2-2m+1}$};
		\filldraw (7.875, 3.5) circle (1.5pt) node[above=2pt] {\footnotesize $z^*_{m^2-2m+2}$};
		\filldraw (7.875, 5.5) circle (1.5pt) node[above=2pt] {\footnotesize $z^*_{m^2-2m+3}$};
		\filldraw (7.875, 6.9) circle (1.5pt) node[above=2pt] {\footnotesize $z^*_{m^2-2m+4}$}; 
		\node at (7.875, 8.0) {$\vdots$};
		\filldraw (7.875, 8.8) circle (1.5pt) node[above=2pt] {\footnotesize $z^*_{m^2-m}$}; 
		
		% Column 6 (Col m)
		\draw (8.75,3.5) -- (10.5,3.5);
		\draw (8.75,6) -- (10.5,6);
		\draw[dashed, thin] (8.75,7.5) -- (10.5,7.5); 
		\draw[dashed, thin] (8.75,8.5) -- (10.5,8.5);
		\filldraw (9.625, 1.75) circle (1.5pt) node[above=2pt] {\footnotesize $z^*_{m^2-m+1}$};
		\filldraw (9.625, 4.75) circle (1.5pt) node[above=2pt] {\footnotesize $z^*_{m^2-m+2}$};
		\node at (9.625, 8.15) {$\vdots$};
		\filldraw (9.625, 9.5) circle (1.5pt) node[above=2pt] {\footnotesize $z^*_{m^2}$};
		
		% ==========================================
		% --- POINTS ON BOUNDARIES ---
		% ==========================================
		
		% Points on the TOP Boundary (z_4 to z_3 line)
		\filldraw (1, 12) circle (1.5pt) node[above=2pt] {$z^*_n$};
		\filldraw (4.75, 12) circle (1.5pt) node[above=2pt] {$z^*_{m^2+7}$}; 
		\filldraw (6.25, 12) circle (1.5pt) node[above=2pt] {$z^*_{m^2+6}$};
		\filldraw (7.875, 12) circle (1.5pt) node[above=2pt] {$z^*_{m^2+5}$};
		\filldraw (9.625, 12) circle (1.5pt) node[above=2pt] {$z^*_{m^2+4}$}; 
		\filldraw (11.25, 12) circle (1.5pt) node[above=2pt] {$z^*_{m^2+3}$}; 
		
		% Points on the RIGHT Boundary (z_2 to z_3 line)
		\filldraw (12, 1.75) circle (1.5pt) node[left=4pt] {$z^*_{m^2+1}$};
		\filldraw (12, 5.5) circle (1.5pt) node[left=4pt] {$z^*_{m^2+2}$};
		
	\end{tikzpicture}
	\caption{Grid partition of $D$ divided into $m^2$ perfect square portions and remaining non-square portions mapped to the upper and right boundaries.}
	\label{fig:partition_final_v6}
\end{figure}
\noindent {\it Proof of (1):}
Let $\nu_n=\frac{1}{n}\sum_{i=1}^{n}\delta_{z_i^*}$ and $f$ be a bounded continuous function. Then we show that the following holds
\[
\l|{\int_{D}f d\nu_{n}-\int_{D}f d\nu}\r|\to 0, \text{ as  } n\to \infty.
\]
Note that  $f$ is uniformly continuous on $D$, as $D$ is compact. Moreover, the length of the sub-rectangle $R_{i,n}$ goes to zero as $n$ tends to $\infty$. Therefore, for given $\epsilon>0$,
\[
\sup_{i\in [n]}\sup_{z_i\in R_{i,n}}|f(z_i^*)-f(z_i)|<\epsilon
\]
for large $n$. 	Which implies that, for $i=1,\ldots,m^2$, 
\begin{align}\label{eqn:uniform}
	\l|\frac{1}{n}f(z_i^*)-\int_{R_{i,n}}fd\nu \r|&\le \int_{R_{i,n}}\l|f(z_i^*)-f(z_i)\r|d\nu(z)\leq \frac{\epsilon}{n}.
\end{align}
On the other hand, since $f$ is uniformly continuous on $D$, we have 
\begin{align}\label{eqn:crudebound}
	\l|\frac{1}{n}f(z_i^*)-\int_{R_{i,n}}fd\nu \r|\le \frac{2M}{n}, \mbox{ for all } i=1,\ldots, n.
\end{align}
Therefore, by \eqref{eqn:uniform} and \eqref{eqn:crudebound} for $n=m^2+k$, we have 
\begin{align*}
	\l|{\int_{D}f d\nu_{n}-\int_{D}f d\nu}\r|&\leq \sum_{i=1}^{m^2}\l|\frac{1}{n}f(z_i^*)-\int_{R_{i,n}}fd\nu\r|+\sum_{i=m^2+1}^{n}\l|\frac{1}{n}f(z_i^*)-\int_{R_{i,n}}fd\nu\r|
	\\&\le \frac{m^2\epsilon}{n}+\frac{2kM}{n}\le 2\epsilon,
\end{align*}
for all large $n$. Which gives the result.

\vspace{.1cm}
\noindent {\it Proof of part (2)}. \underline{\it Case-I}. Suppose $n=m^2$. Let $a_i$ be  the side of  rectangles $\bar R_{i}$. Since the measure $\nu$ has density $g_\nu$ with $1/C\le g_\nu\le C$. Then we have
\[
\frac{1}{C}m(\bar R_{i})\leq \int_{\bar R_{i}}g_{\nu}dm\leq Cm(\bar R_{i}).
\]
Since $m(\bar R_i)=a_iL$ and $\nu(\bar R_i)=1/\sqrt{n}$, hence we have
\begin{align}\label{eqn:a_i}
	\frac{1}{CL\sqrt{n}}\leq a_i\leq \frac{C}{L\sqrt{n}}, \mbox{  for $i=1,\ldots,m$.}
\end{align}
Let $b_i$ be the side length of the sub-rectangle  $R_{i, n}$, for $i=1,\ldots, m^2$. Then, by \eqref{eqn:a_i} and the same arguments as before, we have
\begin{align*}
	\frac{L}{\sqrt{n}C^2}\leq b_i\leq \frac{L}{\sqrt{n}}, \mbox{ for } i=1,\ldots,m^2.
\end{align*}
Thus the side lengths of the rectangle $R_{i,n}$ is $O(1/\sqrt{n})$, for $i=1,\ldots,m^2$.

\vspace{.1cm}
\noindent \underline{\it Case-II.} Suppose $n=m^2+k$ for some $m\in N$, where $0\le k\le 2m$. Consider the rectangle $\bar R_{m+1}=D\backslash \cup_{i=1}^m\bar R_i$.	Then, as $m=\sqrt{n-k}$,
\begin{align*}
	\nu(\bar R_{m+1})=1-\frac{m}{\sqrt n}=\frac{\sqrt{n}-\sqrt{n-k}}{\sqrt{n}}\ge \frac{k}{2n}.
\end{align*}
Let $a_{m+1}$ be the width of the rectangle $\bar R_{m+1}$. Then  as before we have 
\begin{align*}
	a_{m+1}\ge \frac{k}{2CLn}.
\end{align*}
Let $R_i'=\bar R_i\backslash \cup_{j=1}^mR_{i,j}$ for $i=1,\ldots, m$, where $R_{i,j}$ are as defined in \eqref{def:Rij}. Then
\[
\nu(R_i')=\frac{1}{\sqrt n}-\frac{m}{n}\ge \frac{k}{n^{3/2}}.
\]
We know that one of the side length is $a_i$. Suppose the length of the other side is $a_i'$. Therefore, using $Cm(R_i')\ge \nu(R_i')$, we get
\[
a_i'\ge \frac{kL}{C^2n}.
\]
Thus, by the construction of points $\{z_1^*,\ldots,z_n^*\}$, we have 
\[
|z_i^*-z_j^*|\ge \frac{1}{2\sqrt{ Cn}}, \mbox{ for all } 1\le i,j\le n,
\]
as the distance between any two points is atleast the half of the side length of the rectangles. Hence the result.
\end{proof}

\begin{proof}[Proof of \Cref{lem:energy}] Recall $\nu$ and $z_1^*,\ldots, z_n^*$ are as in \Cref{lem:construction of points}.
The proof is considered in following two cases.

\noindent \underline{\it Case-I.} (For perfect square). Suppose that $n=m^2$ for some $m\in \N$. Let
\[
B=\{(i,j)\suchthat 1\leq i,j \leq m, |z_{i}^*-z_{j}^*|>{1}/{\sqrt{m}}\}.
\]
Then consider the following 
\begin{align*}
	&\iint \limits_{z,w\in D} \log|z-w| d{\nu}(z)d{\nu}(w)
	\leq \sum_{i,j=1}^{m}\sup_{z\in R_{i,n},w\in R_{j,n}} \log|z-w|\iint\limits_{R_{i,n}\times R_{j,n}}d{\nu}(z)d{\nu}(w).
	%	\\&\leq\sum_{(i,j)\in B^c}\iint_{R_{i,n},R_{j,n}}\log[(z_{i,n}-z_{j,n})(1+c)]d\Tilde{\nu}d\Tilde{\nu}+\sum_{(i,j)\in B}\iint_{R_{i,n},R_{j,n}}\log[|z_{i,n}-z_{j,n}|(1+{\frac{1}{\sqrt{m}}})]d\Tilde{\nu}d\Tilde{\nu}
	%	\\& \leq \frac{1}{n^2}\sum_{(i,j)\in B^c} \log{|z_{i,n}-z_{j,n}|}+\frac{1}{n^2}|B^c|\log(1+c)+\frac{1}{n^2}\sum_{(i,j)\in B}\log|z_{i,n}-z_{j,n}|+\frac{1}{n^2}\log(1+\frac{1}{\sqrt{m}})\\&=\frac{1}{m^4} \sum_{(i,j)\in B^c}\log|z_{i,n}-z_{j,n}|+\frac{1}{m^4}|B^c|\log(1+c)+\sum_{(i,j)\in B}\log|z_{i,n}-z_{j,n}|+\log(1+\frac{1}{\sqrt{m}})
\end{align*}
Let $z\in R_{i,n}$ and $w\in R_{j,n}$. Suppose $(i,j)\in B$, then 
\begin{align}\label{eqn:inB}
	|z-w|\le |z_i^*-z_j^*|+2\sqrt{C}/m\le |z_i^*-z_j^*|(1+2\sqrt{C/m}).
\end{align}
Since the side lenght of $R_{i,n} $ are atleast $1/m\sqrt{C}$, then we have $|z_i^*-z_j^*|\ge 1/2m\sqrt{C}$. Which implies that, for $z\in R_{i,n}$, $w\in R_{j,n}$ and $(i,j)\in B^c$,
\begin{align}\label{eqn:inBc}
	|z-w|\le |z_i^*-z_j^*|+2\sqrt{C}/m\le |z_i^*-z_j^*|(1+4C).
\end{align}
Then, by \eqref{eqn:inB}, \eqref{eqn:inBc}  and the fact that $\nu(R_{i,n})=1/n$ for all $i=1,\ldots, n$, we have
\begin{align*}
	&	\iint \limits_{z,w\in D} \log|z-w| d{\nu}(z)d{\nu}(w)
	\\\le& \frac{1}{n^2}\sum_{(i,j)\in B^c} \log( |z_i^*-z_j^*|(1+4C))
	+\frac{1}{n^2}\sum_{(i,j)\in B}\log( |z_i^*-z_j^*|(1+2\sqrt{C/m}))
	\\=&\frac{1}{n^2}\sum_{(i,j)} \log( |z_i^*-z_j^*|)+\frac{|B^c|}{n^2} \log((1+4C))+\frac{|B|}{n^2} \log((1+2\sqrt{C/m})).
\end{align*}
Observe that, $|B^c|/n^2\to 0$ and $\log((1+2\sqrt{C/m}))\to 0$ as $n\to \infty$. Therefore
\begin{align}
	\iint \limits_{z,w\in D} \log|z-w| d{\nu}(z)d{\nu}(w)\le \liminf_{n \to \infty}\frac{1}{n^2}\sum_{(i,j)} \log( |z_i^*-z_j^*|).
\end{align}

%%%%%%%%%%%%%%%%%%%%%%%%%%%%%%%%%%%%%%%%%%%%%%%%%%%%%%%%%%\end{proof}
%%%%%%%%%%%%%%%%%%%%%%%%%%%%%%%%%%%%%%%%%%%%%%%%%%%%%%%%%%%%%%%%%%%%%%%%%%%%%%%%%%%%%%%%%%%%%%%%%%%%%%%%%%%%%%%%%%%%%%%%%%%%%%%%%%%%%%%%%%%%%%%%%%%%%%%%%%%%%%%%%%%%%%%%%%%%%%%%%%%%%%%%%%%%%%%%%%%%%%%%%%%%%%%%%%%%%%%%%%%%%%%%%%%%%%%%%%%%%%%%%%%%%%%%%%%%%%%%%%%%%%%%%%%%%%%%%%%%%%%%%%%%%%%%%%%%%%%%%%%%%%%%%%%%%%%%%%%%%%%%%%%%%%%%%%%%%%%%%%%%%

\noindent \underline{\it Case-II.} (For general n). Suppose $m^2<n<(m+1)^2$ for some $m\in \mathbb N$. Then there exists  $1\le k\le 2m$ such that $n=m^2+k$. Therefore we have
\begin{align}\label{eqn:together}
	\iint \limits_{z,w\in D} \log|z-w| d{\nu}(z)d{\nu}(w)&=\sum_{1\leq i,j \leq m^2}\iint_{R_{i,n}\times R_{j,n}}\log|z-w| d\nu(z) d\nu(w)\nonumber
	\\&
	%\leq \sum_{i,j}\iint_{R_{i,n},R_{j,n}}\log|z-w|d\nu d\nu\\&
	+2\sum_{1\leq i\leq n,m^2< j\leq n}\iint_{R_{i,n}\times R_{j,n}}\log|z-w| d\nu(z) d\nu(w).
\end{align}
Observe that by \Cref{lem:construction of points} we have $|z_i^*-z_j^*|\ge 1/2\sqrt{C}m$ for all $i\neq j$.  Suppose $L$ be the diameter of $D$. Then, for $z\in R_{i,n}$ and $w\in R_{j,n}$, we have
\begin{align*}
	|z-w|\le 2L\sqrt{C}m|z_i^*-z_j^*|, \mbox{ for all } i\neq j.
\end{align*}
Which implies that, as $\nu(R_{i,n})=1/n$ for $1\le i\le n$,  
\begin{align}\label{eqn:error}
	&\sum_{1\leq i\leq n,m^2< j\leq n}\iint_{R_{i,n}\times R_{j,n}}\log|z-w| d\nu(z) d\nu(w)
	\\\leq & \frac{1}{n^2}\sum_{1\leq i\leq n,m^2< j\leq n}\log(2L\sqrt{C}m|z_i^*-z_j^*|)\nonumber
	\\
	=&\frac{1}{n^2}\sum_{1\leq i\leq n,m^2< j\leq n}\log(2L\sqrt{C}m)+\frac{1}{n^2}\sum_{1\leq i\leq n,m^2< j\leq n}\log(|z_i^*-z_j^*|).\nonumber
\end{align}
On the other hand by the same calculation as in \underline{Case-I} we get
\begin{align}\label{eqn:main}
	&\sum_{1\leq i,j \leq m^2}\iint_{R_{i,n}\times R_{j,n}}\log|z-w| d\nu(z) d\nu(w)
	\\\le& \frac{1}{n^2}\sum_{1\le i,j\le m^2} \log( |z_i^*-z_j^*|)+\frac{|B^c|}{n^2} \log((1+4C))+\frac{|B|}{n^2} \log((1+2\sqrt{C/m})).\nonumber
\end{align}
Therefore by \eqref{eqn:error} and \eqref{eqn:main} from \eqref{eqn:together} we get 
\begin{align*}
	\iint \limits_{z,w\in D} \log|z-w| d{\nu}(z)d{\nu}(w)\le  \liminf_{n \to \infty}\frac{1}{n^2}\sum_{1\le i,j\le n} \log( |z_i^*-z_j^*|).
\end{align*}
Hence the result.
\end{proof}

\begin{proof}[Proof of \Cref{lem:openset}]
Let $z_1^*,\ldots,z_n^*$ be as in  \Cref{lem:construction of points}. Define,
\[
S=\l\{\frac{1}{n}\sum_{k=1}^{n}\delta_{z_k}\suchthat |z_k-z_{k}^*|\le \frac{1}{n^3} \mbox{ for all } k=1,2,...,n\r\}.
\]
Let $\mu_n\in S$. Then $d(\mu_n, \mu_n^*)\le 1/n^3$, where 
\[
\mu_n^*=\frac{1}{n}\sum_{k=1}^{n}\delta_{z_k^*}.
\] 
On the other hand, by \Cref{lem:construction of points}, we have $\mu_n^*$ converges to $\nu$ weakly as $n\to \infty$. Which implies that $\mu_n\in S$ for large $n$. Hence $S\subset B(\nu,\delta)$ for large $n$. Thus
\begin{align*}
	\bar\P^{\mathbb C}_{n,\beta}(L_n \in B(\nu,\delta))&\geq \bar\P^{\mathbb C}_{n,\beta}(L_n\in S)\\&
	=\int\limits_{|z_1-z_{1}^*|<\frac{1}{n^3}}\cdots\int\limits_{|z_n-z_{n}^*|<\frac{1}{n^3}}\prod_{i<j}|z_{i}-z_{j}|^{\beta}e^{-n\sum\limits_{i=1}^{n}V(z_i)}\prod_{i=1}^{n}dm(z_i).
\end{align*}
%So here we've chosen $\epsilon_n=\frac{1}{n^3}$ very small, where $k\in\mathbb{N}$ and it's depending on n (later, accordingly with the need, we'll choose $\epsilon_n$). 
%now, few estimates are required to have a bound on the above equation. 
Note that  $V$ is  uniformly continuous function on $ D$, as $V$ is continuous on $\mathbb F$.  Then
\begin{align}\label{eqn:v}
	|V(z_k)-V(z_k^*)|\le 1, \mbox{ for all $k=1,\ldots,n$, for large $n$}.
\end{align}
Again,  by \Cref{lem:construction of points}, we have $|z_i^*-z_j^*|\ge 1/2\sqrt{Cn}\ge \frac{1}{n}$, for all $i,j \in {1,2,\ldots,n}$. 
Therefore, if $|z_i-z_i^*|<1/n^3$ and $|z_j-z_j^*|<1/n^3$ for all $1\le i,j\le n$, 
\begin{align}\label{eqn:separation}
	|z_{i}-z_{j}|\geq |z_i^*-z_j^*|-\frac{2}{n^3}\geq |z_i^*-z_j^*|(1-\frac{2}{n^2}), 
	\;\;\mbox{ for $1\le i,j\le n$}.
\end{align}
Thus \eqref{eqn:v} and \eqref{eqn:separation}  imply that 
\begin{align*}
	\bar\P^{\mathbb C}_{n,\beta}(L_n\in S)
	&\geq \idotsint\limits_{B(z_1,n^{-3})\; B(z_n,n^{-3})}\prod_{i<j}||z_i^*-z_j^*|(1-2n^{-2})|^{\beta}e^{-n\sum\limits_{i=1}^{n}V(z_i^*)}e^{-n}\prod_{i=1}^{n}dm(z_i)\\&
	=\prod_{i<j}|z_i^*-z_j^*|^{\beta}\l(1-2n^{-2}\r)^{\frac{\beta n(n-1)}{4}}e^{-n\sum_{i=1}^{n}V(z_{i}^*)}e^{-n}\prod_{i=1}^n|B(z_i,n^{-3})|.
	\\&
	=\prod_{i<j}|z_i^*-z_j^*|^{\beta}\l(1- 2n^{-2}\r)^{\frac{\beta n(n-1)}{4}}e^{-n\sum_{i=1}^{n}V(z_{i}^*)}e^{-n}(\pi n^{-6})^n.
\end{align*}
Since $\log(\cdot)$ is an increasing function, taking $\log $ and limit in both side, we get
\begin{align*}
	\liminf_{n\to \infty} \frac{1}{n^2}\log\bar \P^{\mathbb C}_{n,\beta}(L_n\in S)
	%	\geq\liminf_{n\to\infty}\{\frac{\beta}{2n^2}\sum_{i<j}\log|z_i^*-z_j^*|+\frac{\beta n(n+1)}{2n^2}\log(|1-\frac{1}{n^k}|)-\frac{1}{n}\sum_{i=1}^{n}(V(z_i^*)-\delta_n)\\&
	%	\geq \liminf_{n\to\infty}\{\frac{\beta}{2n^2}\sum_{i<j}\log|z_i^*-z_j^*|\}+\liminf_{n\to\infty}\{\frac{\beta n(n+1)}{2n^2}\log(|1-\frac{1}{n^k}|)\}\\&-\liminf_{n\to\infty}\{\frac{1}{n}\sum_{1}^{n}V(z_i^*)\}-\liminf_{n\to\infty}\{\epsilon_n\}\\&
	&=\liminf_{n\to\infty}\frac{\beta}{2n^2}\sum_{i\neq j}\log|z_i^*-z_j^*|-\liminf_{n\to\infty}\frac{1}{n}\sum_{1}^{n}V(z_i^*)
	\\&\ge \frac{\beta}{2}\iint \log |z-w|d\nu(z)\nu(w)-\int V(z)d\nu(z).
	%	\\&=-I_\beta(\nu).
\end{align*}
The second line  follows from \Cref{lem:energy} and the fact that $V$ is bounded continuous on $D$ and $\mu_n^*$ converges to $\nu$ weakly as $n\to \infty$. Thus, by \Cref{fact:3} we get
\[
\liminf_{n\to \infty} \frac{1}{n^2}\log\P^{\mathbb C}_{n,\beta}(L_n\in B(\nu;\delta))\ge -I_\beta(\nu).
\]
Hence the result.
\end{proof}

%%%%%%%%%%%%%%%%%%%%%%%now the proof will be completed if we use the following lemma

\bibliographystyle{abbrv}
\bibliography{references.bib}

\begin{thebibliography}{10}

\bibitem{anderson2010introduction}
G.~W. Anderson, A.~Guionnet, and O.~Zeitouni.
\newblock {\em An introduction to random matrices}.
\newblock Number 118. Cambridge university press, 2010.

\bibitem{arous1997large}
G.~B. Arous and A.~Guionnet.
\newblock Large deviations for wigner's law and voiculescu's non-commutative
  entropy.
\newblock {\em Probability theory and related fields}, 108:517--542, 1997.

\bibitem{MR3492936}
F.~Augeri.
\newblock Large deviations principle for the largest eigenvalue of {W}igner
  matrices without {G}aussian tails.
\newblock {\em Electron. J. Probab.}, 21:Paper No. 32, 49, 2016.

\bibitem{ben1998large}
G.~Ben~Arous and O.~Zeitouni.
\newblock Large deviations from the circular law.
\newblock {\em ESAIM: Probability and Statistics}, 2:123--134, 1998.

\bibitem{billingsley2013convergence}
P.~Billingsley.
\newblock {\em Convergence of probability measures}.
\newblock John Wiley \& Sons, 2013.

\bibitem{bloom2014large}
T.~Bloom.
\newblock Large deviation for outlying coordinates in $\beta$ ensembles.
\newblock {\em Journal of Approximation Theory}, 180:1--20, 2014.

\bibitem{MR3265172}
C.~Bordenave and P.~Caputo.
\newblock A large deviation principle for {W}igner matrices without {G}aussian
  tails.
\newblock {\em Ann. Probab.}, 42(6):2454--2496, 2014.

\bibitem{MR3262506}
D.~Chafa\"i, N.~Gozlan, and P.-A. Zitt.
\newblock First-order global asymptotics for confined particles with singular
  pair repulsion.
\newblock {\em Ann. Appl. Probab.}, 24(6):2371--2413, 2014.

\bibitem{cook2023full}
N.~A. Cook, R.~Ducatez, and A.~Guionnet.
\newblock Full large deviation principles for the largest eigenvalue of
  sub-gaussian wigner matrices.
\newblock {\em arXiv preprint arXiv:2302.14823}, 2023.

\bibitem{cramer1994nouveau}
H.~Cram{\'e}r.
\newblock Sur un nouveau th{\'e}oreme-limite de la th{\'e}orie des
  probabilit{\'e}s.
\newblock In {\em Collected Works II}, pages 895--913. Springer, 1994.

\bibitem{dembo2009large}
A.~Dembo and O.~Zeitouni.
\newblock {\em Large deviations techniques and applications}, volume~38.
\newblock Springer Science \& Business Media, 2009.

\bibitem{deuschel1989large}
J.~Deuschel and D.~Stroock.
\newblock Large deviations, acad.
\newblock {\em Press, Boston}, 1989.

\bibitem{deuschel2001large}
J.-D. Deuschel and D.~W. Stroock.
\newblock {\em Large deviations}, volume 342.
\newblock American Mathematical Soc., 2001.

\bibitem{MR1936554}
I.~Dumitriu and A.~Edelman.
\newblock Matrix models for beta ensembles.
\newblock {\em J. Math. Phys.}, 43(11):5830--5847, 2002.

\bibitem{dyson1962statistical}
F.~J. Dyson.
\newblock Statistical theory of the energy levels of complex systems. i.
\newblock {\em Journal of Mathematical Physics}, 3(1):140--156, 1962.

\bibitem{federer2014geometric}
H.~Federer.
\newblock {\em Geometric measure theory}.
\newblock Springer, 2014.

\bibitem{forrester2010log}
P.~J. Forrester.
\newblock {\em Log-gases and random matrices (LMS-34)}.
\newblock Princeton university press, 2010.

\bibitem{ginibre1965statistical}
J.~Ginibre.
\newblock Statistical ensembles of complex, quaternion, and real matrices.
\newblock {\em Journal of Mathematical Physics}, 6(3):440--449, 1965.

\bibitem{MR2095566}
A.~Guionnet.
\newblock Large deviations and stochastic calculus for large random matrices.
\newblock {\em Probab. Surv.}, 1:72--172, 2004.

\bibitem{MR2926763}
A.~Hardy.
\newblock A note on large deviations for 2{D} {C}oulomb gas with weakly
  confining potential.
\newblock {\em Electron. Commun. Probab.}, 17:no. 19, 12, 2012.

\bibitem{hedenmalm2013coulomb}
H.~Hedenmalm and N.~Makarov.
\newblock Coulomb gas ensembles and laplacian growth.
\newblock {\em Proceedings of the London Mathematical Society},
  106(4):859--907, 2013.

\bibitem{Holcomb2017OvercrowdingAF}
D.~Holcomb and B.~Valk{\'o}.
\newblock Overcrowding asymptotics for the sine(beta) process.
\newblock {\em Annales De L Institut Henri Poincare-probabilites Et
  Statistiques}, 53:1181--1195, 2017.

\bibitem{husson2022large}
J.~Husson.
\newblock Large deviations for the largest eigenvalue of matrices with variance
  profiles.
\newblock {\em Electronic Journal of Probability}, 27:1--44, 2022.

\bibitem{kinderlehrer2000introduction}
D.~Kinderlehrer and G.~Stampacchia.
\newblock {\em An introduction to variational inequalities and their
  applications}.
\newblock SIAM, 2000.

\bibitem{krishnapur2006overcrowding}
M.~Krishnapur.
\newblock Overcrowding estimates for zeroes of planar and hyperbolic gaussian
  analytic functions.
\newblock {\em Journal of statistical physics}, 124(6):1399--1423, 2006.

\bibitem{lacroix2019intermediate}
B.~Lacroix-A-Chez-Toine, J.~A.~M. Garz{\'o}n, C.~S.~H. Calva, I.~P. Castillo,
  A.~Kundu, S.~N. Majumdar, and G.~Schehr.
\newblock Intermediate deviation regime for the full eigenvalue statistics in
  the complex ginibre ensemble.
\newblock {\em Physical Review E}, 100(1):012137, 2019.

\bibitem{majumdar2009large}
S.~N. Majumdar and M.~Vergassola.
\newblock Large deviations of the maximum eigenvalue for wishart and gaussian
  random matrices.
\newblock {\em Physical review letters}, 102(6):060601, 2009.

\bibitem{MR4235485}
B.~McKenna.
\newblock Large deviations for extreme eigenvalues of deformed {W}igner random
  matrices.
\newblock {\em Electron. J. Probab.}, 26:Paper No. 34, 37, 2021.

\bibitem{o1991capacities}
G.~L. O’brien and W.~Vervaat.
\newblock Capacities, large deviations and loglog laws.
\newblock In {\em Stable Processes and Related Topics: A Selection of Papers
  from the Mathematical Sciences Institute Workshop, January 9--13, 1990},
  pages 43--83. Springer, 1991.

\bibitem{pukhalskii1994theory}
A.~A. Pukhalskii.
\newblock On the theory of large deviations.
\newblock {\em Theory of Probability \& Its Applications}, 38(3):490--497,
  1994.

\bibitem{saff1997logarithmic}
E.~B. Saff, V.~Totik, et~al.
\newblock {\em Logarithmic potentials with external fields}, volume 316.
\newblock Springer, 1997.

\bibitem{MR3353821}
E.~Sandier and S.~Serfaty.
\newblock 2{D} {C}oulomb gases and the renormalized energy.
\newblock {\em Ann. Probab.}, 43(4):2026--2083, 2015.

\bibitem{serfaty2024lectures}
S.~Serfaty.
\newblock Lectures on coulomb and riesz gases.
\newblock {\em arXiv preprint arXiv:2407.21194}, 2024.

\bibitem{shirai2015ginibre}
T.~Shirai.
\newblock Ginibre-type point processes and their asymptotic behavior.
\newblock {\em Journal of the Mathematical Society of Japan}, 67(2):763--787,
  2015.

\bibitem{varadhan1966asymptotic}
S.~S. Varadhan.
\newblock Asymptotic probabilities and differential equations.
\newblock {\em Communications on Pure and Applied Mathematics}, 19(3):261--286,
  1966.

\end{thebibliography}

\end{document}